\documentclass[11pt]{amsart}
\usepackage{tikz}
\usetikzlibrary{decorations.markings}

\usepackage{fullpage}
\usepackage{amssymb}
\usepackage{amsfonts}
\usepackage{amsmath}
\usepackage{mathrsfs} % script fonts
\usepackage{mathtools}
\usepackage{aliascnt}

\usepackage{enumerate}
\usepackage[shortlabels]{enumitem}

\usepackage{hyperref} 
\usepackage[noabbrev,capitalise]{cleveref} % Auto-reference theorems by name
\usepackage{url}

\theoremstyle{plain}
\newtheorem{theorem}{Theorem}[section]

\newaliascnt{corollary}{theorem}
\newtheorem{corollary}[corollary]{Corollary}
\aliascntresetthe{corollary}

\newaliascnt{proposition}{theorem}
\newtheorem{proposition}[proposition]{Proposition}
\aliascntresetthe{proposition}

\newaliascnt{lemma}{theorem}
\newtheorem{lemma}[lemma]{Lemma}
\aliascntresetthe{lemma}

\newaliascnt{conjecture}{theorem}

\aliascntresetthe{conjecture}

\newaliascnt{question}{theorem}

\aliascntresetthe{question}

\theoremstyle{definition}

\newaliascnt{definition}{theorem}
\newtheorem{definition}[definition]{Definition}
\aliascntresetthe{definition}

\newaliascnt{example}{theorem}

\aliascntresetthe{example}

\newaliascnt{remark}{theorem}
\newtheorem{remark}[remark]{Remark}
\aliascntresetthe{remark}

\newaliascnt{fact}{theorem}

\aliascntresetthe{fact}

\newaliascnt{claim}{theorem}

\aliascntresetthe{claim}

\crefname{theorem}{Theorem}{Theorems}
\Crefname{theorem}{Theorem}{Theorems}

\crefname{corollary}{Corollary}{Corollaries}
\Crefname{corollary}{Corollary}{Corollaries}

\crefname{proposition}{Proposition}{Propositions}
\Crefname{proposition}{Proposition}{Propositions}

\crefname{lemma}{Lemma}{Lemmas}
\Crefname{lemma}{Lemma}{Lemmas}

\crefname{conjecture}{Conjecture}{Conjectures}
\Crefname{conjecture}{Conjecture}{Conjectures}

\crefname{question}{Question}{Questions}
\Crefname{question}{Question}{Questions}

\crefname{definition}{Definition}{Definitions}
\Crefname{definition}{Definition}{Definitions}

\crefname{example}{Example}{Examples}
\Crefname{example}{Example}{Examples}

\crefname{remark}{Remark}{Remarks}
\Crefname{remark}{Remark}{Remarks}

\crefname{fact}{Fact}{Facts}
\Crefname{fact}{Fact}{Facts}

\crefname{claim}{Claim}{Claims}
\Crefname{claim}{Claim}{Claims}

\numberwithin{equation}{section}

\newcommand{\declarecommand}[1]{\providecommand{#1}{}\renewcommand{#1}}

\declarecommand{\R}{\mathbb{R}}
\declarecommand{\Q}{\mathbb{Q}}
\declarecommand{\Z}{\mathbb{Z}}
\declarecommand{\N}{\mathbb{N}}
\declarecommand{\C}{\mathbb{C}}
\declarecommand{\F}{\mathbb{F}}
\declarecommand{\E}{\mathbb{E}}
\declarecommand{\emptyset}{\varnothing}

\declarecommand{\Re}{\mathrm{Re}}
\declarecommand{\Im}{\mathrm{Im}}
\DeclarePairedDelimiter\abs{\lvert}{\rvert}

\renewcommand\abs[1]{\left\lvert#1\right\rvert}

\newcommand{\n}{\mathbf{n}}

\renewcommand{\phi}{\varphi}

\declarecommand{\ds}{\displaystyle}
\usepackage{lipsum}
\usepackage{xcolor}
\usepackage{bbm}

\newcommand{\msum}{\mathop{\sum\cdots\sum}}

\newcommand{\1}{\mathbbm{1}}

\renewcommand{\P}{\mathop{\mathbb{P}}}
\renewcommand{\E}{\mathop{\mathbb{E}}}

\makeatletter
\def\@tocline#1#2#3#4#5#6#7{\relax
  \ifnum #1>\c@tocdepth % then omit
  \else
    \par \addpenalty\@secpenalty\addvspace{#2}%
    \begingroup \hyphenpenalty\@M
    \@ifempty{#4}{%
      \@tempdima\csname r@tocindent\number#1\endcsname\relax
    }{%
      \@tempdima#4\relax
    }%
    \parindent\z@ \leftskip#3\relax \advance\leftskip\@tempdima\relax
    \rightskip\@pnumwidth plus4em \parfillskip-\@pnumwidth
    #5\leavevmode\hskip-\@tempdima
      \ifcase #1
       \or\or \hskip 1em \or \hskip 2em \else \hskip 3em \fi%
      #6\nobreak\relax
    \hfill\hbox to\@pnumwidth{\@tocpagenum{#7}}\par% <---- \dotfill -> \hfill
    \nobreak
    \endgroup
  \fi}
\makeatother

\title{Improved bounds for multiplicative functions in almost all short intervals}

\author{Siddarth Menon}
\address{
Siddarth Menon \\
Department of Pure Mathematics and Mathematical Statistics, Centre for Mathematical Sciences \\ 
University of Cambridge \\
Cambridge, CB3 0WB \\
United Kingdom}
\email{sm2884@cam.ac.uk}

\date{\today}

\begin{document}

\begin{abstract}
We refine the Matom\"aki--Radziwi\l{}\l{} method for short averages of multiplicative functions.
For the Liouville function and multiplicative functions supported on smooth numbers, we prove decay bounds that are essentially optimal with the Matom\"aki--Radziwi\l{}\l{} method.
The key new ingredient is a sharper treatment of the sieve error, achieved by introducing a more widely separated final prime range when restricting to integers with typical factorizations.
We additionally give a weaker but still improved bound for arbitrary $1$-bounded multiplicative functions and discuss some of the limitations.
As an application, we give an improved bound for the averaged Chowla conjecture of Matom\"aki--Radziwi\l{}\l{}--Tao that seems essentially best possible via their method. 
\end{abstract}

\maketitle

\tableofcontents % Uncomment if paper exceeds 30 pages

\section{Introduction}\label{sec:1}

In this paper, we study short partial sums of multiplicative functions $f$.
Remarkable progress in proving cancellation for these types of sums for general real-valued multiplicative functions (notably the Liouville function $\lambda$) was made by Matom\"aki and Radziwi\l{}\l{} in \cite{matomaki_radziwill}, and subsequently extended to complex-valued multiplicative functions in \cite{average_chowla}.
Specifically for the Liouville function, it is shown that
\begin{equation}{\label{eq:MR_error}}
    \frac{1}{X}\int_X^{2X} \abs{\frac{1}{h}\sum_{x\leq n\leq x+h}\lambda(n)}^2 dx \ll \frac{(\log\log h)^2}{(\log h)^2} + \frac{1}{(\log X)^c},
\end{equation}
with $c = \frac{1}{50}$. This is $o(1)$ if $h = h(X) \to\infty$ as $X \to \infty$, however slowly. 
The result can moreover be extended to real-valued multiplicative functions $f:\N\to [-1,1]$ via subtracting off the long average $\frac{1}{X}\sum_{X\leq n\leq 2X} f(n)$ inside the square in the integrand. 

The results of Matom\"aki and Radziwi\l{}\l{} had a number of groundbreaking consequences and led to significant improvements in Chowla's conjecture concerning average correlations of the form $\sum_{n\leq x}\lambda(n)\lambda(n+1)$ \cite{average_chowla, chowla_elliot_2, expansion, pilatte}.
There were additionally many further applications of their paper along with the follow-up paper \cite{mr2}. 
These include sign-changes of multiplicative functions, counting almost-primes in short intervals \cite{almost_almost, almost_almost_2, gaussian_almost}, the behavior of the prime divisor function $\omega$ in short intervals \cite{omega}, and Tao's resolution of the Erd\H{o}s discrepancy problem \cite{tao_discrep}.
Their paper additionally prompted generalizations from short intervals to short arithmetic progressions \cite{mult_in_aps}, and from bounded multiplicative functions to divisor bounded multiplicative functions \cite{mangerel_unbounded, zhou_unbounded, sun2024divisorboundedmultiplicativefunctions}. 
Moreover, specifically for the Liouville function, Chinis showed that under the Riemann hypothesis, one can obtain a substantially better error term \cite{chinis}.

Observe that for (relatively) ``larger'' scales of interval lengths, the error term in \eqref{eq:MR_error} is suboptimal; for $h > \exp\left((\log X)^c\right)$ the decay is limited by the second error term.
Here, we modify the arguments of \cite{matomaki_radziwill} and \cite{average_chowla} to prove an error term of the shape $(\log h)^{-2+o(1)}$, which performs better at these interval scales.
In fact, the proof has enough flexibility to twist $\lambda$ by a Dirichlet character $\chi$ with reasonably small moduli.

\begin{theorem}{\label{thm:liouville_bound}}
    Let $B > 0$ be fixed and let $X \geq h \geq 10$. Let $q\leq (\log X)^B$, and let $\chi$ be a Dirichlet character mod $q$.
    Then 
    $$\frac{1}{X} \int_X^{2X} \abs{\frac{1}{h}\sum_{x\leq n\leq x+h} \chi(n)\lambda(n)}^2 dx \ll_B \frac{(\log\log h)^2}{(\log h)^2} + \frac{(\log\log X)^4}{(\log X)^2}.$$
\end{theorem}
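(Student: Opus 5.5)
The plan follows the Matom\"aki--Radziwi\l{}\l{} scheme, changing the factorization set so that the sieve error is of size $(\log X)^{-2+o(1)}$ instead of $(\log X)^{-c}$. Write $f=\chi\lambda$.

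\textbf{Reduction to Dirichlet polynomials.} Set $P_1=h$, $Q_1=h^{1+o(1)}$, and take increasing ranges $[P_j,Q_j]$ up to a final range $[P_J,Q_J]$. The new ingredient is to make the final range widely separated, e.g.\ $P_J=\exp((\log X)^{1/2})$ and $Q_J=X^{1/(\log\log X)^2}$. Let $\mathcal S$ be the set of $n\in[X,2X]$ with at least one prime factor in each $[P_j,Q_j]$. By the fundamental lemma of the sieve (or Mertens-type estimates), the complement of $\mathcal S$ contributes, after averaging over $x$, about $\sum_j \log P_j/\log Q_j$. For $j=1$ this gives the $(\log\log h)/\log h$ term, whose square is acceptable. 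The key point is that the $L^2$ contribution of the exceptional set can be bounded by its density squared plus a variance term. For $\chi\lambda$ this variance is again controlled via Dirichlet polynomials, so the long-range error enters quadratically: $(\log P_J/\log Q_J)^2\approx (\log\log X)^4/(\log X)^{\ldots}$. The ranges must be tuned so that this is $(\log\log X)^4/(\log X)^2$.

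\textbf{Mean value step.} Parseval/Gallagher bounds the short average minus the long average on $\mathcal S$ by
$$\frac{1}{T_0}\int_{T_0}^{X/h}|F(1+it)|^2\,dt$$
plus a tail, where $F(s)=\sum_{n\in\mathcal S}f(n)n^{-s}$. We then decompose $F=Q_{v,H}R_{v,H}$ via Ram\'e factorizations over the $[P_j,Q_j]$ and split $t$ into sets $\mathcal T_j$ according to where $|Q_{v,H}(1+it)|$ is large. On each $\mathcal T_j$ we apply the mean value theorem together with the Hal\'asz--Montgomery large values estimate, as in \cite{matomaki_radziwill}. For $\chi\lambda$ we also need pointwise bounds $|\sum_{p\sim P}\chi(p)\lambda(p)p^{-1-it}|\ll (\log X)^{-A}$ for $|t|\le X$ and $P\ge \exp((\log X)^{1/2})$. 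These follow from Vinogradov--Korobov zero-free regions for $L(s,\chi)$, with at most one Siegel exception; since $q\le(\log X)^B$, Siegel--Walfisz handles it with implied constants depending on $B$. This strong pointwise bound on the last range replaces Hal\'asz and removes the $T_0$ (small $|t|$) term, since $\lambda$ does not pretend to be $n^{it}\bar\chi$.

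\textbf{Main obstacle.} The hardest step is the sieve error from the final range. In the original argument this produces the $(\log X)^{-1/50}$ loss. We must show that restricting to $\mathcal S$ costs only $(\log\log X)^4/(\log X)^2$ in mean square. Our first approach is to avoid bounding the exceptional set trivially. Instead we treat $n\notin\mathcal S$ (no prime in the final range) by a second Dirichlet-polynomial argument: factor $n=mp$ with $p$ in a slightly smaller range, or use that such $n$ are $Q_J$-smooth times rough parts. The short-minus-long variance of $f\1_{\mathcal S^c}$ is then bounded by (density)$^2$, which is $(\log P_J/\log Q_J)^2$. Choosing $\log P_J/\log Q_J\approx(\log\log X)^2/\log X$ then requires an extra small prime range near $(\log X)^{O(1)}$ to be accommodated, and the $(\log\log X)^4$ loss comes from this choice.
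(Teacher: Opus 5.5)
There is a genuine gap, and it is exactly the step you flag as the main obstacle: the sieve loss from the final range. Your final range $[P_J,Q_J]$ stops at $X^{o(1)}$ and is built into $\mathcal S$. So the integers with no prime factor in it really do have density $\asymp \log P_J/\log Q_J$, and this is what enters the quadratic treatment of $\mathcal S^c$. With your choices that density is $(\log\log X)^2(\log X)^{-1/2}$, which gives only $(\log\log X)^4/\log X$.

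No tuning fixes this. The pointwise prime-sum bound you use on the last range requires $\log P_J\ge(\log X)^{2/3+\epsilon}$; compare Lemma~\ref{lem:pnt}. A Vinogradov--Korobov region of width $(\log X)^{-2/3+o(1)}$ saves only $P^{-(\log X)^{-2/3+o(1)}}$, so even your threshold $\exp((\log X)^{1/2})$ is too low. Since $Q_J\le X$, this forces $\log P_J/\log Q_J\ge(\log X)^{-1/3+\epsilon}$, and the squared loss is no better than $(\log X)^{-2/3+2\epsilon}$. Your closing paragraph does not supply a mechanism that beats this.

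Separately, $P_1=h$, $Q_1=h^{1+o(1)}$ gives $\log P_1/\log Q_1\approx 1$ and violates $Q_1\le h$. In the paper, the $(\log\log X)^4/(\log X)^2$ arises as $(\log P_1/\log Q_1)^2$ with $Q_1=\min(h,\exp(\log X/\log\log X))$ and $P_1=(\log Q_1)^{O(1)}$. It does not come from a final range.

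The missing idea is to let the final range run up to $Q=3X$, with $P=X^{1/u}$ and $u=R\log\log X/\log\log\log X$, and to keep it \emph{out} of $\mathcal S$. Integers with no prime factor in $[P,3X]$ are exactly the $P$-smooth ones. Their density is $u^{-u}=(\log X)^{-R+o(1)}$ (Lemmas~\ref{lem:smooth_numbers} and~\ref{lem:last_range_sieve_error}), far below the Mertens prediction $1/u$. So they can be discarded on the frequency side by the mean value theorem, with no squaring needed.

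The price is that Ramar\'e's identity over $[P,3X]$ produces prime polynomials of length up to $X$, whose cofactors are too short for Hal\'asz--Montgomery. The paper handles lengths above $X^{4/10}$ with three inputs:
\begin{itemize}
\item Heath-Brown's identity (Lemma~\ref{lem:heath_brown});
\item exponent-pair bounds for $\sum\chi(n)n^{-it}$ (Lemma~\ref{lem:exponent_pairs});
\item Siegel--Walfisz-type bounds for $\sum\chi(n)\mu(n)n^{-1-it}$ (Lemma~\ref{lem:pnt_mobius}).
\end{itemize}
Your plan has nothing for this regime.

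Note also that the correct prime-sum bound is $\frac{\log X}{1+|t|}+(\log X)^{-A}$, not $(\log X)^{-A}$ uniformly in $|t|\le X$. For principal $\chi$ one has $\chi(p)\lambda(p)=-\chi(p)$, and the sum has size $\asymp 1/\log P$ at bounded $t$. So the non-pretentiousness of $\lambda$ does not dispose of small frequencies. The paper removes $|t|\le(\log X)^R$ by comparing with a long average of length $X/(\log X)^{3R}$ (Lemma~\ref{lem:lipschitz}). That long average is controlled separately by Siegel--Walfisz-type estimates for $\chi\lambda$ together with the density of $\mathcal S^c$.
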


Some formulations of problems like Chowla's conjecture are given in terms of the M\"obius function $\mu(n)$ rather than $\lambda(n)$.
The results proven here are completely analogous and carry over with essentially no extra effort.

One can ask for similar bounds in the case of general \emph{non-pretentious} multiplicative functions $f$.
Hal\'asz' theorem says that a multiplicative function should have small mean value, provided that it is not close to any Archimedean character $n^{it}$. This is in the sense that the \emph{pretentious distance}
$$\mathbb{D}(f,n\mapsto n^{it};X)^2 = \sum_{p\leq X} \frac{1-\Re (f(p)p^{-it})}{p}$$
is large for all $t$ in some appropriate range.
In what follows, define 
$$M(f;X) = \min_{|t|\leq X} \mathbb{D}(f, n\mapsto n^{it};X)^2.$$

In \cite{average_chowla} the authors prove a bound of the form 
$$\frac{1}{X}\int_X^{2X}\abs{\frac{1}{h}\sum_{x\leq n\leq x+h} f(n)}^2 dx \ll \exp(-M(f;X)) + \frac{(\log\log h)^2}{(\log h)^2} + \frac{1}{(\log X)^{\frac{1}{50}}}.$$

Here, this third error term limits the decay for larger interval lengths (in the same sense as the previous case) as well as sufficiently non-pretentious functions (when $M(f;X) \geq \frac{1}{49}  \log\log X$, say). 
Our proof technique for \cref{thm:liouville_bound} extends naturally to general complex-valued \emph{non-pretentious} multiplicative functions $f$, provided that $f$ is supported on $X^{1-\delta}$-smooth numbers (i.e. $f(n) = 0$ when $n$ has a prime factor larger than $X^{1-\delta}$), where $\delta > 0$ is an arbitrarily small constant.
In this setting, we can additionally improve the dependence on the pretentious distance.

\begin{theorem}{\label{thm:smooth_f_thm}}
    Let $X \geq h \geq 10$ and fix some $0 < \delta < 1$.
    Let $f:\N\to \C$ be a 1-bounded multiplicative function, supported on $X^{1-\delta}$-smooth numbers.
    Then
    $$\frac{1}{X} \int_X^{2X} \abs{\frac{1}{h}\sum_{x\leq n\leq x+h} f(n)}^2 dx \ll_{\delta} \exp(-(2-o(1))M(f;X)) + \frac{(\log\log h)^2}{(\log h)^2} + \frac{1}{(\log X)^{2-o(1)}}.$$
\end{theorem}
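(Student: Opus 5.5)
We follow the Matom\"aki--Radziwi\l{}\l{} scheme: reduce the variance to mean values of Dirichlet polynomials, and handle the sieve error with a widely separated final prime range. First, by Parseval (as in \cite{matomaki_radziwill, average_chowla}) the left side is bounded by
\[
\frac{1}{(\log h)^{2}}\ \text{-type terms} + \int_{T_0\leq |t|\leq X/h} \abs{F(1+it)}^2\,dt + \max_{T_0\leq T\leq X/h}\frac{X/h}{T}\int_T^{2T}\abs{F(1+it)}^2dt,
\]
where $F(s)=\sum_{X\le n\le 2X} f(n)n^{-s}$. The contribution of $|t|\le T_0$ is compared with the long average, and Hal\'asz's theorem in the Granville--Soundararajan form controls it. We run the argument with $f$ restricted to the set $\mathcal S$ of $n\in[X,2X]$ having a prime factor in each of the ranges $[P_j,Q_j]$, $j=1,\dots,J$. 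The first ranges are chosen as in \cite{matomaki_radziwill}, with $P_1=h$ and $Q_1=h^{C}$, increasing geometrically in the exponent.

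The first key change is the choice of the last range. It is taken to be very wide, say $[P_J,Q_J]=[\exp((\log X)^{1-\epsilon}),\,X^{\delta/2}]$ with $\epsilon=o(1)$. The smoothness hypothesis is what makes this possible: every $n$ in the support has all prime factors $\le X^{1-\delta}$, so there is room to factor off a prime in the final range. The cost of restricting to $\mathcal S$ is then the mean square of the contribution of $n\notin\mathcal S$ in short intervals. A sieve bound in short intervals (Shiu/Brun) gives roughly $\prod_j \log P_j/\log Q_j$ per range. The final range contributes $\big((\log X)^{1-\epsilon}/\log X\big)^2=(\log X)^{-2+o(1)}$, after squaring via a second-moment sieve estimate. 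This produces the term $(\log X)^{-2+o(1)}$ in place of $(\log X)^{-1/50}$. The first range gives $(\log\log h/\log h)^2$ as usual.

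Next we decompose the integration range as in \cite{matomaki_radziwill}. For $t$ where some $Q_{v,H}(1+it)=\sum_{P_v\le p\le Q_v}f(p)p^{-1-it}$ is small, we factor $F = Q_{v,H}R_{v,H}$, apply the mean value theorem, and iterate down the ranges in the standard Matom\"aki--Radziwi\l{}\l{} fashion. On the residual set $\mathcal U$, where every polynomial including the final-range one is large, a large-values estimate (Hal\'asz--Montgomery) for the prime polynomial at scale $P_J$ shows that $\mathcal U$ is sparse. There we bound $|F|$ pointwise, using the Hal\'asz-type estimate
\[
\sup_{|t|\le X}\abs{F(1+it)}\ll (1+M)e^{-M}+(\log X)^{-1+o(1)}.
\]
This is the step that yields $\exp(-(2-o(1))M(f;X))$ after squaring. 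For this we need a Lipschitz/Hal\'asz bound uniform over $t$, and the smoothness lets us write $F$ as a convolution with a prime polynomial whose length is a power of $X$.

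The main obstacle is the final range. The standard approach requires $Q_J\le \exp((\log X)^{1/2})$ or so, because the large-values estimate for $\sum_{p\sim P}p^{-it}$ needs $P^{\ell}\approx X$ with $\ell$ controlled. With $P_J$ as large as $\exp((\log X)^{1-\epsilon})$, we must instead use the Vinogradov--Korobov zero-free region, combined with the non-pretentiousness of $f$ via Hal\'asz's theorem applied to primes in dyadic ranges. We plan to use a dyadic decomposition of $[P_J,Q_J]$ into $\asymp \log X$ pieces, and to verify that the mean-value loss from the extra $\log$-factors of $\ell$ is only $(\log\log X)^{O(1)}$, absorbed into the $o(1)$ in the exponent.
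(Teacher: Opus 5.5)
\textbf{The final prime range.} Your sieve-error computation is wrong. With $[P_J,Q_J]=[\exp((\log X)^{1-\epsilon}),X^{\delta/2}]$, your own expression $\big((\log X)^{1-\epsilon}/\log X\big)^2$ equals $(\log X)^{-2\epsilon}$. For $\epsilon=o(1)$ this is $(\log X)^{-o(1)}$, not $(\log X)^{-2+o(1)}$.

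This is more than arithmetic. Since $Q_J=X^{\delta/2}$ lies below the top $X^{1-\delta}$ of the support, the unsieved set contains, for $\delta<\frac12$, the integers $n=p_1p_2m$ with primes $p_1,p_2\in[X^{\delta},X^{1-\delta}]$ and $m<P_J$. These have density $\gg_\delta \log P_J/\log X$. Forcing this down to $(\log X)^{-1+o(1)}$ would require $\log P_J=(\log X)^{o(1)}$. Then the Ramar\'e losses on that range, which the prime mean values only offset up to a factor $\log Q_J/\log P_J$, become a power of $\log X$.

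The paper's key ingredient is a separate range $[P,Q]=[X^{1/u},3X]$ with $u=2\log\log X/\log\log\log X$. It is not inserted into the Matom\"aki--Radziwi\l{}\l{} chain, which stays below $\exp(\log X/\log\log X)$, and is used only on $\mathcal U$. Because this range reaches the top, missing it means being $P$-smooth. That has density $u^{-u}=(\log X)^{-2+o(1)}$ (\cref{lem:last_range_sieve_error}), far below the Mertens heuristic $\log P/\log Q\approx 1/u$. Meanwhile $\log Q/\log P\approx u$ keeps all losses at $(\log\log X)^{O(1)}$.

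Your first range is also misstated. One needs $Q_1\le h$ and $P_1=(\log Q_1)^{C}$. With $P_1=h$ and $Q_1=h^{C}$ the sieve loss is $1/C$, and \eqref{eq:integral_T_1} fails because it carries the factor $TQ_1/X=Q_1/h$.

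\textbf{The set $\mathcal U$.} Bounding $|F(1+it)|$ pointwise by Hal\'asz and multiplying by the size of $\mathcal U$ cannot work. At your threshold $e^{-\alpha_Jv/H_J}$, \cref{lem:large_values} only gives $|\mathcal U|\ll T^{2\alpha_J+o(1)}$ (cf.\ \eqref{eq:E_bound}). At any threshold $(\log X)^{-C}$ its bound is still at least $(\log X)^{2C}$. So you lose powers of $T$ or of $\log X$, not $(\log\log X)^{O(1)}$.

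The paper instead proceeds as follows on $\mathcal U$:
\begin{enumerate}
\item It applies the Perron-variant Ramar\'e identity (\cref{lem:perron_ramare}) on $[P,Q]$.
\item It uses Hal\'asz pointwise only on the cofactor $R_v$ (\cref{lem:modified_R_halasz}, with the weight handled via $\frac{1}{\omega+1}=\int_0^1 y^{\omega}\,dy$).
\item It bounds the mean square of the prime polynomial $Q_v$ over the well-spaced set, which returns $(\log e^{v})^{-2}$. This uses \cref{lem:prime_hm_2} on the sparse set where $|Q_v|>(\log X)^{-100}$, or \cref{lem:prime_hm} when $Q_v>X^{6/10}$.
\item Where $|Q_v|$ is small, it applies \cref{lem:hm} or \cref{lem:exp_pair_hm} to $R_v$.
\end{enumerate}

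Smoothness is used exactly here, not to make room to factor off a prime. The support condition gives $Q_v\le X^{1-\delta}$, hence $R_v\ge X^{\delta}$. Hal\'asz at scale $R_v$ therefore gives $\exp(-M(f;X))$ up to $O_\delta(1)$ in the exponent and $(\log\log X)^{O(1)}$ factors.

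The Vinogradov--Korobov region and ``Hal\'asz for primes in dyadic ranges'' are not available for general $f$. Non-pretentiousness gives no pointwise bound on $\sum_{p\sim P}f(p)p^{-1-it}$.

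Finally, you do not treat the pretentious regime $M(f;X)\le\frac{1}{32}\log\log X$, where the $(\log\log X)^{O(1)}$ losses cannot be absorbed into $\exp(o(1)M)$. The paper splits frequencies near and far from the minimizing $t_1$. It uses \cref{lem:local_halasz} far from $t_1$ and the Granville--Soundararajan re-centering lemma near it.
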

Here, the $o(1)$ in the exponential is as $M(f;X) \to\infty$ with $X$. If $M(f;X)$ were bounded, the prefactor would not necessarily converge to 2, although in this case the entire first term is bounded by some constant anyways. 

Although we cannot prove a bound of the same strength for arbitrary multiplicative functions, we can prove an improved estimate that saves essentially one power of $\log X$ rather than two, and $\exp(-(1-o(1))M(f;X))$ rather than its square in the previous theorem.
\begin{theorem}{\label{thm:general_f}}
    Let $X \geq h \geq 10$.
    Let $f: \N\to \C$ be a 1-bounded multiplicative function.
    Then
    $$\frac{1}{X} \int_X^{2X} \abs{\frac{1}{h}\sum_{x\leq n\leq x+h} f(n)}^2 dx \ll \exp(-(1-o(1))M(f;X)) + \frac{(\log\log h)^2}{(\log h)^2} + \frac{1}{(\log X)^{1-o(1)}}.$$
\end{theorem}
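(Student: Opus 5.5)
We follow the Matom\"aki--Radziwi\l{}\l{} scheme: pass to Dirichlet polynomials by Parseval, then use the factorization structure of typical integers. Let $F(s)=\sum_{X\le n\le 2X} f(n)n^{-s}$. The standard Parseval/Gallagher lemma bounds the left-hand side, after removing the long average twisted by the nearest archimedean character $n^{it_0}$, by
$$\int_{|t|\le X/h} |F(1+it)|^2\,dt$$
plus negligible terms. The contribution of $t$ near $t_0$ is controlled by Hal\'asz's theorem. This yields the main term $\exp(-(1-o(1))M(f;X))$ via the standard Hal\'asz--Montgomery--Tenenbaum bound $\ll (1+M)e^{-M}$ for the long mean value, where the factor $(1+M)$ is absorbed into the $o(1)$.

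Next, let $\mathcal S$ be the set of $n\in[X,2X]$ having a prime factor in each of the ranges $[P_j,Q_j]$, $j=1,\dots,J$. We take $P_1=h$, $Q_1=h^{1+o(1)}$, and choose the ranges increasing up to a final range $[P_J,Q_J]$ with $P_J=\exp((\log X)^{1-\epsilon})$ and $Q_J = X^{\epsilon}$ for small $\epsilon$. This widely separated final range is the new ingredient.

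The sieve error, namely the contribution of $n\notin\mathcal S$, is estimated by bounding the short interval sums of $f\1_{\mathcal S^c}$ trivially in mean square. The density of integers lacking a prime in $[P_j,Q_j]$ is $\ll \log P_j/\log Q_j$. For $j=1$ this gives $(\log\log h)/\log h$, whose square is the second error term. For the final range the density is $(\log X)^{-\epsilon'}$. We will need this density in mean square to be only $(\log X)^{-1+o(1)}$, so we push $P_J$ down to $\exp((\log\log X)^{C})$ and $Q_J=X^{1/\log\log X}$, which gives density $(\log X)^{-1+o(1)}$. This contributes $(\log X)^{-1+o(1)}$ rather than its square, because for general $f$ we cannot exploit cancellation in the sieve-error sum. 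That is exactly why only one power of $\log X$ is saved here, whereas the smooth/Liouville cases save two.

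For $n\in\mathcal S$ we run the usual iterative decomposition. We write $F_{\mathcal S}(s)\approx \sum_{v} Q_{j,v}(s)R_{j,v}(s)$, split $t$ into sets $\mathcal T_j$ according to where the prime polynomial $Q_{j,v}(1+it)$ is first small, and apply the mean value theorem, Hal\'asz--Montgomery large values estimates, and the Vinogradov--Korobov-type pointwise bound $|\sum_{p\sim P} f(p)p^{-1-it}|$ small for $t$ away from $t_0$. The latter is where non-pretentiousness enters. The key quantitative point is the final set $\mathcal T_J$, where $Q_J$ is a prime polynomial of length up to $X^{1/\log\log X}$. There, Hal\'asz-type bounds on $\sum_{n\le X} f(n)n^{-it}$ for $|t-t_0|\gg 1$ give a saving of $(\log X)^{-1+o(1)}$ in mean square, consistent with the target.

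The main obstacle is balancing the final prime range. It must be short enough in $\log$-scale that the sieve density is $(\log X)^{-1+o(1)}$, yet long enough that the large values/Hal\'asz--Montgomery step on $\mathcal T_J$ (needing $Q_J^{\text{large power}}\ge X/h$ counts) still works. We would first try $Q_J=X^{1/(\log\log X)^2}$ and check that the number of large values is bounded by $\exp(O(\log X/\log Q_J))$ with acceptable losses.
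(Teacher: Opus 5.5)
\textbf{There is a genuine gap. It starts with your sieve-error bookkeeping, which then forces an unworkable final prime range.}

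Bounding the short sums of $f\1_{\mathcal S^c}$ trivially in mean square gives only the density $\log P_1/\log Q_1$, not its square. (With your literal choice $P_1=h$, $Q_1=h^{1+o(1)}$ that density is $\asymp 1$; presumably you mean $P_1=(\log h)^{C}$, $Q_1=h$.) So your method does not actually produce $(\log\log h)^2/(\log h)^2$. The paper gets squared densities on the frequency side from the improved mean value theorem (\cref{lem:add_back_sieve_error}, \cref{lem:abse_2}), by controlling $\#\{n: n,n+\ell\in\mathcal A\}$ on average over $\ell$.

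More seriously, you allow yourself only the unsquared density $\log P_J/\log Q_J$ for the last range. That forces $\log Q_J/\log P_J=(\log X)^{1-o(1)}$, and with such a range the treatment of the remaining frequencies fails for three reasons.
\begin{enumerate}
\item After Ramar\'e's identity over $[P_J,Q_J]$, the available bounds lose a factor of size $\log Q_J/\log P_J$. For instance, the Hal\'asz bound for the Ramar\'e-weighted $R_v$ goes through $f y^{\omega_{[P_J,Q_J]}}$. Its distance to $n^{i\tau}$ can be smaller than that of $f$ by $\sum_{P_J\le p\le Q_J}1/p=(1-o(1))\log\log X$. So you only get $|R_v(1+it)|\ll(\log X)^{1-o(1)}e^{-M}$, which is trivial unless $M>(1-o(1))\log\log X$.
\item The prime polynomials $Q_v$ with $e^v$ near $\exp((\log\log X)^C)$ admit no usable mean-value estimate over an exceptional set. \cref{lem:prime_hm} needs $e^v$ larger than about $(|\mathcal E|T^{1/2})^{20/19}$, and \cref{lem:prime_hm_2} needs $|\mathcal E|\le \exp(\log e^v/(\log T)^{2/3+\epsilon})$; both are hopeless here. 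So the set where all prime polynomials are large, which your sketch never isolates, cannot be handled with this range. The range also violates the spacing conditions $\frac{\eta}{j^2}\log P_j\ge 8\log Q_{j-1}+16\log j$ of the chain.
\item There is no Vinogradov--Korobov-type pointwise bound for $\sum_{p\sim P}f(p)p^{-1-it}$ when $f$ is arbitrary. Non-pretentiousness is a global condition and says nothing about a single short prime sum. The only source of non-pretentious savings is Hal\'asz's theorem on the long factor $R_v$, where problem 1 applies.
\end{enumerate}
Your asserted ``saving of $(\log X)^{-1+o(1)}$ in mean square'' on $\mathcal T_J$ is therefore unsupported.

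\textbf{The missing idea is to let the last range run almost up to $X$ instead of stopping at $X^{o(1)}$.} The paper takes $P$ as in \eqref{eq:PQ_choices} with $R=2$ and $Q=X\exp(-(\log X)^{\beta})$. Then $\log Q/\log P\asymp\log\log X/\log\log\log X$, so the Ramar\'e, $y^{\omega}$ and \cref{lem:prime_hm} losses are only $(\log\log X)^{O(1)}$.

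Integers with no prime factor in $[P,Q]$ are then of two kinds:
\begin{itemize}
\item $P$-smooth integers, of density $(\log X)^{-2+o(1)}$ by the smooth-number estimate rather than by $\log P/\log Q$;
\item integers with a prime factor exceeding $Q$, of density $(\log X)^{-(1-\beta)}$, which \cref{lem:abse_2} squares.
\end{itemize}

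The price is that $R_v$ may be as short as $\exp((\log X)^{\beta})$. \cref{lem:multiscale_halasz} then gives $e^{-2M(f;R_v)}\ll(\log X)^{2-2\beta+o(1)}e^{-2M(f;X)}$. The paper first reduces to $M(f;X)>\frac{1}{50}\log\log X$ via Theorem A.1 of Matom\"aki--Radziwi\l{}\l{}--Tao. It then chooses $\beta=1-\sigma(f;X)/2$, which balances this loss against $(\log X)^{-2(1-\beta)}$.

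That balancing is the true source of both $e^{-(1-o(1))M}$ and the single power of $\log X$. The loss comes from integers $n=pm$ with $p$ close to $X$, not from an inability to square the sieve error. The frequencies near $t_0$ contribute only about $e^{-2M}$.
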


The same remark that follows \cref{thm:smooth_f_thm}, concerning the $o(1)$ in the exponential, holds here.  

As an application of their short interval bounds for $\lambda(n)$, Matom\"aki--Radziwi\l{}\l{}--Tao proved the following results concerning a short interval exponential sum in $\lambda$ as well as an averaged form of Chowla's conjecture:
\begin{equation}{\label{eq:MRT_exp_sum}}
    \sup_{\alpha\in \R} \frac{1}{X}\int_0^X \abs{ \frac{1}{h} \sum_{x \leq n \leq x+h} \lambda(n) e(\alpha n)} dx \ll \frac{\log\log h}{\log h} + \frac{1}{(\log X)^{\frac{1}{700}}},
\end{equation}
and 
\begin{equation}{\label{eq:MRT_avg_chowla}}
    \frac{1}{H^{k-1}} \sum_{1\leq h_2, \dots, h_k \leq H} \abs{ \frac{1}{X}\sum_{1\leq n \leq X} \lambda(n)\lambda(n+h_2)\dots \lambda(n+h_k)} \ll k\left(\frac{\log\log H}{\log H} + \frac{1}{(\log X)^{\frac{1}{3000}}}\right).
\end{equation}

Equation \eqref{eq:MRT_exp_sum} is proved via the circle method, and implies \eqref{eq:MRT_avg_chowla} via a Fourier identity.
The improved bounds for $\lambda$ twisted by Dirichlet characters of moduli $q\leq (\log X)^B$ will enable us to increase the major/minor arc cutoff. 
In practice this enables us to prove the following:

\begin{theorem}{\label{thm:improved_exp_sum}}
    For any $X \geq h \geq 10$, one has 
    $$\sup_{\alpha\in \R} \frac{1}{X}\int_0^X \abs{\frac{1}{h}\sum_{x\leq n\leq x+h} \lambda(n)e(\alpha n)} dx \ll \frac{\log\log h}{\log h} + \frac{(\log\log X)^2}{\log X}.$$
\end{theorem}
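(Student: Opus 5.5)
We follow the circle-method argument of Matom\"aki--Radziwi\l{}\l{}--Tao for \eqref{eq:MRT_exp_sum}, feeding in \cref{thm:liouville_bound} on the major arcs and moving the major/minor arc cutoff up to $Q = (\log X)^{B}$ for a suitable fixed $B$. Fix $\alpha$. By Dirichlet's theorem choose $a/q$ with $q \leq Q' := X/(\log X)^{B}$ (or $h$-dependent variant) and $|\alpha - a/q| \leq 1/(qQ')$. We distinguish the major arc case $q \leq Q = (\log X)^B$ from the minor arc case $Q < q \leq Q'$.

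\emph{Major arcs.} Write $\alpha = a/q + \beta$. First, by partial summation over the short interval, the phase $e(\beta n)$ can be removed at a cost controlled by $|\beta| h$; if $|\beta|$ is too large for this, we instead subdivide $[x,x+h]$ into subintervals of length $h' \asymp \min(h, 1/|\beta|)$ (keeping $h'$ as large as allowed, and noting that when $h'$ is small the Dirichlet approximation with the larger denominator bound puts us in the minor arc case). Then expand $e(an/q)$ into Dirichlet characters: splitting $n = dm$ with $d = (n,q)$ and using $\lambda(dm) = \lambda(d)\lambda(m)$, one writes $e(an/q)$ on residues coprime to $q/d$ as a combination of $\chi(m)$ with coefficients given by Gauss sums of size $\ll \sqrt{q}$, divided by $\phi(q/d)$. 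Using Cauchy--Schwarz and \cref{thm:liouville_bound} for each character (moduli $\leq (\log X)^B$, all allowed), the $L^1$ average is bounded by $\sum_{d\mid q}(\text{weight})\cdot\bigl(\frac{\log\log h}{\log h} + \frac{(\log\log X)^2}{\log X}\bigr)$ times a factor polynomial in $q^{\epsilon}$ from the character expansion. To avoid losing a power of $q$, we should rather apply Cauchy--Schwarz after expanding, using orthogonality of characters (Parseval over $\chi$ mod $q$) so that the total contribution is a mean over characters of the second moments, giving the bound without a $\sqrt q$ loss; the divisor sum over $d$ contributes only $\tau(q)/d$-weighted terms which are harmless. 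Taking square roots of the bound in \cref{thm:liouville_bound} yields exactly $\frac{\log\log h}{\log h}+\frac{(\log\log X)^2}{\log X}$.

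\emph{Minor arcs.} For $q > (\log X)^B$, we use the bilinear/Vinogradov-type estimate exactly as in Matom\"aki--Radziwi\l{}\l{}--Tao: by Cauchy--Schwarz and a Tur\'an--Kubilius/Ramar\'e identity restricting to $n$ with a prime factor in a range $[P,Q_0]$, the short-interval exponential sum is reduced to type II sums $\sum_{p}\sum_m \lambda(m) e(\alpha pm)$, which by the large sieve / Vinogradov bound save a factor $q^{-c}$ plus $(\log X)^{-c'}$-type losses from the Ramar\'e identity. With $q > (\log X)^B$ and $B$ chosen large, the $q^{-c}$ saving becomes $(\log X)^{-cB} \leq (\log X)^{-1}$. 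The losses from restricting to numbers with a prime factor in $[P,Q_0]$ are of size $\log P/\log Q_0$, which we make $\ll \frac{\log\log h}{\log h}$ by choosing $P,Q_0$ in terms of $h$, and the $X$-range loss is arranged to be $\ll (\log X)^{-1}$.

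The main obstacle is the minor-arc step: the original argument had a $(\log X)^{-1/700}$ loss because the cutoff $Q$ was small; we must verify that every loss in the Ramar\'e-identity/type II analysis is at most $\frac{\log\log h}{\log h} + \frac{(\log\log X)^2}{\log X}$ once $q \geq (\log X)^B$, in particular the contribution of integers without a prime factor in the chosen range, which I would handle by taking the prime range $[h^{\epsilon}, h]$-type intervals for small $h$ and an additional range near $\exp((\log X)^{1/2})$ tied to $X$ when $h$ is large, as suggested by the paper's ``widely separated final prime range''.
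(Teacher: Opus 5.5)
There is a genuine gap on the major arcs. Applying \cref{thm:liouville_bound} character by character gives each twisted sum $S_\chi(x)=\sum_{x\le n\le x+h}\chi(n)\lambda(n)$ only the saving $\delta=\frac{\log\log h}{\log h}+\frac{(\log\log X)^2}{\log X}$. Reassembling $e(an/q)$ from characters then costs at least a factor $\sqrt{q}$. For $(n,q)=1$ one has $\sum_n\lambda(n)e(an/q)=\phi(q)^{-1}\sum_\chi\chi(a)\tau(\overline{\chi})S_\chi$. The Parseval-over-characters step you propose only gives the pointwise bound $\bigl(\sum_\chi|S_\chi(x)|^2\bigr)^{1/2}$, which is $\sqrt{\phi(q)}$ times the size of a single $S_\chi$. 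There is no orthogonality of the $S_\chi(x)$ in $x$ to exploit, since the cross terms involve correlations of $\lambda$; with only uniform per-character bounds this loss is sharp. MRT's minor-arc bound is $(\log h)^{1/4}\log\log h/W^{1/4}$, so the major/minor cutoff must satisfy $W\ge(\log h)^5$. A loss of up to $W^{1/2}$ then swamps $\delta$, and no choice of cutoff balances the two arcs. The underlying reason is that $\delta$ in \cref{thm:liouville_bound} is the sieve error from integers without a typical factorization, and it does not decay in $q$. The divisor sum over $d\mid q$ is also not harmless against a target of this precision.

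The missing idea is to keep the restriction $n\in\mathcal{S}$, with $P_1=W^{C_\kappa}$ and $Q_1=\widetilde h/W^3$, throughout the circle method, as in \cref{prop:key_sum}. On the major arcs one uses \cref{cor:short_interval_major_arc}, whose bound $(\log Q_1)^{1/3}P_1^{-(\varepsilon/3-\eta)}+(\log X)^{-A}$ is $\ll W^{-5/2}$ for $(\log h)^5\le W\le(\log X)^{20}$. This power saving in $W$ absorbs the factor $q\le W$ lost in splitting into residue classes and characters. Only at the very end are the $n\notin\mathcal{S}$ added back, trivially via \cref{prop:sieve_error}, so the sieve error $\frac{\log P_1}{\log Q_1}$ enters once and is never multiplied by $q$. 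Note that \cref{cor:short_interval_major_arc} does not follow from \cref{thm:liouville_bound}. It requires bounding the long moving average of $\chi\lambda\1_{\mathcal{S}}$ directly, because the Lipschitz route reintroduces $\frac{\log P_1}{\log Q_1}$.

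Your minor-arc set-up also needs correcting. The cutoff should be $W=(\log h)^5$ with Dirichlet approximation at scale $h/W$. The MRT bilinear argument saves a power of $W$, not of $q$, and needs the prime range $[W^{C_\kappa},h/W^3]$ to be nonempty, which fails for $W=(\log X)^B$ when $h$ is small. A large denominator at scale $X$ also does not produce minor-arc behaviour on intervals of length $h$. For $\alpha=\tfrac12+h^{-3/2}$, Dirichlet at scale $X/(\log X)^B$ returns $q\gg h^{3/2}$, yet on $[x,x+h]$ the phase is essentially $(-1)^n$. With the correct set-up the minor arcs are exactly MRT's and are not the obstacle. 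The old $(\log X)^{-1/700}$ came from the major arcs, which needed $W\le(\log X)^{1/125}$.
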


\begin{theorem}{\label{thm:improved_avg_chowla}}
    For any natural number $k\geq 2$ and any $X \geq H \geq 10$, we have 
    $$\frac{1}{H^{k-1}}\sum_{1\leq h_2, \dots, h_k \leq H} \abs{\frac{1}{X}\sum_{1\leq n\leq X} \lambda(n)\lambda(n+h_2) \dots \lambda(n+h_k)} \ll k\left(\frac{\log\log H}{\log H} + \frac{(\log\log X)^2}{\log X}\right).$$
\end{theorem}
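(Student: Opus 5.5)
We deduce \cref{thm:improved_avg_chowla} from \cref{thm:improved_exp_sum} through the Fourier-analytic argument of Matom\"aki--Radziwi\l{}\l{}--Tao, which converts a bound on short exponential sums of $\lambda$ into a bound on averaged $k$-point correlations. We argue by induction on $k$, or more directly by isolating the last shift $h_k$. Fix $h_2,\dots,h_{k-1}$ and set $g(n) = \lambda(n)\lambda(n+h_2)\cdots\lambda(n+h_{k-1})$, a $1$-bounded function. The inner quantity is then $\frac{1}{X}\sum_{n\le X} g(n)\lambda(n+h_k)$, and it suffices to show that, uniformly over all $1$-bounded $g$,
$$\frac{1}{H}\sum_{1\le h\le H}\abs{\frac{1}{X}\sum_{n\le X} g(n)\lambda(n+h)} \ll \frac{\log\log H}{\log H} + \frac{(\log\log X)^2}{\log X} + \frac{H}{X}.$$
Once this holds, averaging over the remaining shifts gives the claim. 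The $k$ dependence in the statement is harmless, since we may assume $H\le X/\log X$ (the bound is trivial otherwise, e.g.\ when the right side exceeds $1$) and edge terms contribute $O(kH/X)$.

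\textbf{Step 1: reduce to a correlation with a short sum.} Remove the absolute values by inserting unimodular weights $c_h$. By shifting $n$ and averaging over an auxiliary window of length comparable to $H$, we rewrite the left side, up to $O(H/X)$, as
$$\frac{1}{X}\sum_{m} \tilde g(m)\,\frac{1}{H}\sum_{h\le H} c_h\,\lambda(m+h),$$
where $\tilde g$ is a smoothed average of shifts of $g$. The dual form we want is a bound for $\frac{1}{X}\int_0^X \abs{\sum_{h\le H} c_h \lambda(x+h)}\,dx/H$ that holds for arbitrary coefficients $c_h$.

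\textbf{Step 2: pass to exponential sums.} Write $c_h = \int_0^1 \hat c(\alpha) e(-\alpha h)\,d\alpha$ in Fourier form. This expresses $\sum_h c_h \lambda(x+h)$ as an integral over $\alpha$ of $e(-\alpha x)\sum_{x<n\le x+H}\lambda(n)e(\alpha n)$. The $L^1$ norm of $\hat c$ is too large to apply this naively, so we follow MRT and use Cauchy--Schwarz together with a large-sieve type identity. Concretely,
$$\frac{1}{X}\int_0^X\abs{\sum_{h\le H} c_h\lambda(x+h)}^2dx \le H\int_0^1 \frac{1}{X}\int_0^X \abs{\sum_{x<n\le x+H}\lambda(n)e(\alpha n)}^2\,dx\,\abs{\hat c(\alpha)}^2 d\alpha,$$
roughly, since $\int_0^1\abs{\hat c}^2 = \sum_h\abs{c_h}^2 = H$. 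The point, exactly as in MRT's deduction of \eqref{eq:MRT_avg_chowla} from \eqref{eq:MRT_exp_sum}, is that an $L^2$ supremum over $\alpha$ is not what \cref{thm:improved_exp_sum} provides; it gives an $L^1$ supremum. MRT handle this by interpolating: the $L^2$ mean over $x$ is bounded by $\sup$ times the trivial $L^\infty$ bound $H$, which yields an $L^1$-to-$L^2$ loss of a square root. To avoid that loss, I would instead reproduce the MRT identity verbatim. They write the correlation as $\int_0^1 \frac{1}{X}\int \big(\sum_n g\ldots\big)\ldots d\alpha$ and bound it by $\sup_\alpha$ of the $L^1$ exponential sum quantity times $\int\abs{\hat c}$ restricted appropriately. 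This gives a linear dependence on the right side of \eqref{eq:MRT_exp_sum}, consistent with the linear transfer of error terms visible when comparing \eqref{eq:MRT_exp_sum} and \eqref{eq:MRT_avg_chowla} (their exponent $1/3000$ versus $1/700$ reflects only choices of $H$ relative to $X$, not a square-root loss).

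\textbf{Step 3: substitute the new input.} Whatever identity MRT use, the proof of \eqref{eq:MRT_avg_chowla} is formal in the exponential sum bound. Replacing the right side of \eqref{eq:MRT_exp_sum} by that of \cref{thm:improved_exp_sum} (applied at scale $h\asymp H$, with $X$ replaced by intervals of comparable size) gives the result. The main obstacle is checking that MRT's deduction loses no power, i.e.\ that the bound transfers linearly and uniformly in $k$. The critical point is whether their reduction chooses an auxiliary scale $H'$ depending on $X$ (e.g.\ $H' = \exp((\log X)^c)$), which would degrade the $(\log\log X)^2/\log X$ term. If it does, I would take the auxiliary scale to be $H$ itself and absorb the edge effects as $O(H/X)$, which is acceptable.
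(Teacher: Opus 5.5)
There is a genuine gap. Your Step 3 assumes that the MRT deduction transfers the exponential-sum bound \emph{linearly}, and that is false: every step of the Fourier route loses a power.

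\begin{enumerate}
\item The identity (Lemma 1.4 of \cite{average_chowla}) needs $\sup_\alpha\int|\sum_{x\le n\le x+2H}f(n)e(\alpha n)|^2dx$. Reaching this from an $L^1$ bound $\epsilon HX$ via the trivial $L^\infty$ bound only gives $\sum_{|h|\le H}|\sum_n f(n)\overline{f(n+h)}|^2\ll\epsilon HX^2$. After Cauchy--Schwarz in $h$, this is an $\ell^1$ two-point bound of only $\epsilon^{1/2}HX$.
\item Your reduction to an arbitrary $1$-bounded $g$ then costs another square root. After inserting $c_h$, the only available estimate is $\sum_n|\sum_h c_h\lambda(n+h)|\le X^{1/2}\bigl(\sum_{h,h'}|\sum_n\lambda(n+h)\lambda(n+h')|\bigr)^{1/2}$. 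The alternative, expanding $c_h$ in additive characters to use the $\sup_\alpha$ directly, loses $\sum|\hat c|$, which can be of size $H^{1/2}$.
\end{enumerate}

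Starting from \cref{thm:improved_exp_sum}, where $\epsilon=\frac{\log\log H}{\log H}+\frac{(\log\log X)^2}{\log X}$, you would end with roughly $\epsilon^{1/4}$, not $\epsilon$. The drop from $\frac{1}{700}$ to $\frac{1}{3000}$ between \eqref{eq:MRT_exp_sum} and \eqref{eq:MRT_avg_chowla} reflects exactly such losses, not merely choices of $H$ relative to $X$. The paper's own truncated $k$-point bound is $W^{-1/20}$, coming from an exponential-sum bound of shape $W^{-1/4}$.

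The missing idea is twofold. The input must carry a free parameter that can absorb these losses, and the sieve error must be put back only at the very end, where it enters linearly. The paper proceeds as follows.

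\begin{enumerate}
\item It applies the Fourier identity to $f=\lambda\1_{\mathcal{S}}\1_{[1,X+2H]}$ and uses \cref{prop:key_sum}. That bound, $\frac{(\log H)^{1/4}\log\log H}{W^{1/4}}$, contains no sieve term.
\item Since $W$ may now be as large as $(\log X)^{20}$, one can take $W=(\log H)^{20}$ for every $H\le X$. After the MRT van der Corput argument, the truncated $k$-point sum is still $\ll kH^{k-1}XW^{-1/20}=kH^{k-1}X/\log H$, despite all the losses.
\item The tuples in which some $n+h_j\notin\mathcal{S}$ are removed by a union bound and \cref{prop:sieve_error}, at cost $kH^{k-1}X\frac{\log P_1}{\log Q_1}$. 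With casework on whether $H\le\exp(\log X_0/\log\log X_0)$, this produces both terms of the theorem and the factor $k$.
\end{enumerate}

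\cref{thm:improved_exp_sum} cannot serve as the input, because the sieve error $\frac{\log\log H}{\log H}$ is already built into it and leaves no room to absorb the square-root losses.
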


In \cite{average_chowla}, the authors state that their framework cannot show decay much larger than $\frac{1}{\log H}$, as this involves controlling $\lambda(n)$ on numbers $n$ not divisible by any prime less than $H$, which means that we gain nothing by averaging over shifts. The bound in \cref{thm:improved_avg_chowla} (essentially) attains this threshold value, and so seems to be the best one could hope for via the Matom\"aki--Radziwi\l{}\l{} method. 

\subsection{Outline of proofs}

We follow the general strategy of Matom\"aki--Radziwi{\l}{\l}, reducing short interval estimates to mean-square estimates for Dirichlet polynomials on the line $\operatorname{Re}(s)=1$.
As in \cite{matomaki_radziwill}, we first restrict to integers with a typical factorization, namely integers having prime factors in a sequence of intervals $[P_j,Q_j]$.
The main new point is that we modify the final prime range, which is much more widely separated: roughly, it begins at $X^{o(1)}$ and extends up to $X$.
Standard estimates for smooth numbers then show that the integers with no prime factor in this final range have density $\ll_R (\log X)^{-R+o(1)}$, where we may choose $R > 2$. This is the source of the improved sieve error. We discuss this, and also present some necessary bounds for special Dirichlet polynomials, in \cref{sec:2}.

In \cref{sec:3} we give a proof of \cref{thm:liouville_bound}. After passing to the Dirichlet polynomial side, the contribution from the first set of prime ranges $[P_j,Q_j]$ is treated by the same $L^2$--$L^\infty$ approach as in \cite{matomaki_radziwill}.
On the set of frequencies for which one of the corresponding
prime-supported Dirichlet polynomials is small, this gives the required saving.
It remains to handle an exceptional set of frequencies on which all of these prime polynomials are large. 
Here we use a different and more separated range of primes $[P,Q]$ in the Ramar\'e identity. In the case of
$\chi(n)\lambda(n)$, we apply a Ramar\'e-type decomposition along this final prime range, and then use Heath-Brown's identity to decompose the resulting prime-supported Dirichlet polynomial.

In \cref{sec:4} we study more general $1$-bounded complex-valued multiplicative functions. The same factorization argument is combined with a Hal\'asz--Montgomery estimate for Dirichlet polynomials supported on primes and Hal\'asz' theorem.
In this part of the argument, three inputs differ most clearly from the corresponding steps in \cite{matomaki_radziwill}. 
Firstly, we prove a variant of the Ramar\'e decomposition for Dirichlet polynomials using Perron's formula to remove the cross condition $pm \leq X$, which ensures that the remaining Dirichlet polynomials are genuinely dyadic. 
Secondly, we give a Hal\'asz--Montgomery estimate for long prime-supported Dirichlet polynomials which allows for a much larger discrete set of frequencies.
Lastly, we give a Hal\'asz-type bound for the remainder Dirichlet polynomial arising from Ramar\'e's identity which gives the same main term and tighter error bounds.
When $f$ is supported on $X^{1-\delta}$-smooth numbers, the contribution of integers of the form $n=pm$ with $p$ very large and $m$ very small is absent, and the final prime range gives both the $(\log X)^{-2+o(1)}$ saving and the stronger dependence $\exp(-(2-o(1))M(f;X))$ in \cref{thm:smooth_f_thm}.
For arbitrary multiplicative functions
this large prime obstruction cannot be removed with the same strength; choosing weaker parameters instead gives \cref{thm:general_f}. 

In \cref{sec:5}, we apply \cref{thm:liouville_bound} to the averaged Chowla problem via the
circle method of Matom\"aki--Radziwi{\l}{\l}--Tao \cite{average_chowla}.
The improvement needed there is the uniformity in Dirichlet twists $\chi(n)\lambda(n)$ for moduli $q\leq(\log X)^B$.
This allows the major/minor arc cutoff to be increased to a sufficiently large power of $\log X$, and hence to a power of $\log h$ throughout the full range $10\leq h\leq X$.
The minor arc argument is unchanged from \cite{average_chowla}; the improved short interval estimate is used on the major arcs. This yields \cref{thm:improved_exp_sum} and \cref{thm:improved_avg_chowla}.

\subsection{Notation}
We write $\mathbb{P}$ for the set of primes, and unless otherwise specified the letters $p,p_1,p_2, \dots$ denote primes. We use $\1_A$ for the indicator function of a set or condition $A$.

For $n$ a positive integer, $\tau(n)$ will denote the number of divisors of $n$ and $\tau_{k}(n)$ will denote the $k$-fold divisor function. Also $\Omega(n)$ and $\omega(n)$ will denote the number of prime factors of $n$ counted with and without multiplicity (respectively).
We also write $e(\alpha) = e^{2\pi i \alpha}$. 

We use the normal Vinogradov $\ll, \gg$ notation and normal big $O$ and little $o$ notations, with subscripts indicating
the parameters on which the implicit constant is allowed to depend. 

Additionally, we follow the convention that the length of a Dirichlet polynomial is denoted by the same capital letter as the polynomial itself. For example, we would refer to the Dirichlet polynomial $\sum_{D\leq n\leq 2D} d_n n^{-s}$ by $D(s)$, and refer to its length as $D$. 

For the remainder of the paper, fix once and for all the constants
\begin{equation}{\label{eq:fixed_constants}}
    \kappa = \frac{1}{2^{25}-2} \qquad \nu = 1-\frac{24}{2^{25}-2}.
\end{equation}

\section{Preliminaries and lemmas}\label{sec:2}
As in \cite{matomaki_radziwill}, the strategy will proceed by using a suitable analog of Parseval's identity to go to the Dirichlet polynomial (frequency) side.
We will first collect necessary lemmas concerning $L^2$ estimates for Dirichlet polynomials, then present some necessary pointwise bounds, and finally the decomposition theorems we use to give a proof of the main theorem.

\subsection{Lemmas concerning Dirichlet polynomials}\label{sec:2.1}

We first need continuous and discrete mean value theorems for Dirichlet polynomials. 
\begin{lemma}[Mean value theorem]{\label{lem:mvt}}
  Let $N, T \geq 10$ and let $(a_n)$ be a sequence of complex numbers. Let $A(s) = \sum_{n\leq N} a_n n^{-s}$.
  Then
  $$\int_{-T}^T \abs{A(it)}^2 dt \ll (T+N)\sum_{n\leq N}|a_n|^2.$$
\end{lemma}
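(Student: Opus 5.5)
The plan is the standard smoothing-and-Plancherel argument; the statement is the classical Montgomery--Vaughan-type mean value theorem in its weak form. First I introduce a nonnegative smooth weight. Fix a Schwartz function $\phi$ with $\phi \geq 0$, $\phi(t)\geq 1$ for $|t|\leq 1$, and with Fourier transform $\hat\phi$ supported in $[-1,1]$. For instance, take $\phi = |\psi|^2$ where $\hat\psi$ is a smooth bump on $[-1/2,1/2]$, scaled so that $|\psi|\geq 1$ on $[-1,1]$. Then
$$\int_{-T}^T |A(it)|^2\,dt \leq \int_{\R} |A(it)|^2 \phi(t/T)\,dt.$$
Expanding the square gives
$$\sum_{m,n\leq N} a_m\overline{a_n}\int_\R \phi(t/T)\,(n/m)^{it}\,dt = T\sum_{m,n} a_m\overline{a_n}\,\hat\phi\!\left(\tfrac{T}{2\pi}\log(m/n)\right),$$
up to the normalisation convention for the Fourier transform.

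Next I split into the diagonal and the off-diagonal. The diagonal terms $m=n$ contribute $T\hat\phi(0)\sum|a_n|^2 \ll T\sum|a_n|^2$. For $m\neq n$, the term vanishes unless $|\log(m/n)|\ll 1/T$. Since $\hat\phi$ is bounded, by $|a_m\overline{a_n}|\leq \tfrac12(|a_m|^2+|a_n|^2)$ the off-diagonal is at most
$$T\sum_n |a_n|^2 \cdot \#\{m\neq n : |\log(m/n)|\ll 1/T\}.$$

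Then I count the off-diagonal $m$. The condition forces $|m-n|\ll n/T\leq N/T$, so the number of such $m$ is $\ll N/T$. This count is empty if $N/T$ is smaller than a small constant, and is $\ll N/T$ otherwise. The off-diagonal is therefore $\ll N\sum|a_n|^2$. Combining the two contributions gives $(T+N)\sum|a_n|^2$.

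There is no real obstacle here. The only point needing care is choosing the weight $\phi$ with compactly supported Fourier transform and dominating the indicator of $[-1,1]$. The weight $|\psi|^2$ above handles this, since its transform $\hat\psi * \widetilde{\hat\psi}$ is supported in $[-1,1]$. One can alternatively cite Montgomery--Vaughan's Hilbert inequality to get the sharper form $\sum(T+O(n))|a_n|^2$, but that is unnecessary for the stated bound.
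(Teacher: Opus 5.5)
The paper does not prove this lemma; it quotes Theorem 9.1 of Iwaniec--Kowalski. Your smoothing argument is a correct, self-contained proof. You majorise $\1_{[-T,T]}(t)$ by $\phi(t/T)$ with $\widehat{\phi}$ compactly supported, so after expanding the square only pairs with $|\log(m/n)|\le 2\pi/T$ survive. The count $\#\{m\neq n : |m-n|\le cn/T\}\le 2cN/T$ then holds uniformly, and in particular it is zero when $cN/T<1$. This gives the off-diagonal bound $N\sum_{n\le N}|a_n|^2$.

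Compared with the Montgomery--Vaughan Hilbert-inequality route you mention, your argument is more elementary. It only gives an upper bound with an unspecified constant, whereas Montgomery--Vaughan gives the asymptotic $\int_{-T}^{T}|A(it)|^2\,dt=\sum_{n\le N}(2T+O(n))|a_n|^2$. The paper only ever uses the upper bound (mostly on the $1$-line with coefficients $a_n/n$), so nothing is lost.

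One detail in your construction of the weight needs tidying. Rescaling the amplitude of $\psi$ makes $|\psi|\ge 1$ on $[-1,1]$ only if $\psi$ has no zeros there, and this is not automatic for an arbitrary bump on $[-1/2,1/2]$. With the normalisation $\psi(t)=\int\widehat{\psi}(\xi)e(\xi t)\,d\xi$, the limiting case $\widehat{\psi}=\1_{[-1/2,1/2]}$ already gives $\psi(t)=\sin(\pi t)/(\pi t)$, which vanishes at $t=1$. A simple fix is to take $\widehat{\psi}\ge 0$ supported in $[-1/8,1/8]$. Then for $|t|\le 1$,
\[
\Re(\psi(t))\ge \cos(\pi/4)\int\widehat{\psi}>0 ,
\]
and $\widehat{\phi}=\widehat{\psi}*\widetilde{\widehat{\psi}}$ is still supported in $[-1,1]$. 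Alternatively, use a rescaled Fejér kernel, or replace $T$ by a constant multiple of $T$. Either way only the implied constants change.
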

\begin{proof}
    See Theorem 9.1 of \cite{iwaniec_kowalski}.
\end{proof}

We will additionally need the following variants of the mean value theorem.
\begin{lemma}[Improved mean value theorem]{\label{lem:IMVT}}
    Let $N, T \geq 10$ and let $(a_n)$ be a sequence of complex numbers. 
     Let $A(s) = \sum_{n\leq N} a_n n^{-s}$.
    Then 
    $$\int_{-T}^T \abs{A(it)}^2 dt \ll T\sum_{n\leq N}|a_n|^2 + T\sum_{1\leq \ell \leq \frac{N}{T}} \sum_{n\leq N}|a_n||a_{n+\ell}|.$$
\end{lemma}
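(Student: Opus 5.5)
We will prove this by the standard smoothing argument. Majorize the sharp cutoff $\1_{[-T,T]}(t)$ by a nonnegative weight whose Fourier transform is compactly supported, expand the square, and then use the decay of that Fourier transform to handle the off-diagonal terms.

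\emph{Choice of weight.} Fix an even Schwartz-type function $w\geq 0$ with $w(t)\geq 1$ on $[-1,1]$ and $\hat w$ supported in $[-1,1]$. For instance, take $w(t)=c\left(\frac{\sin(t/2)}{t/2}\right)^2$ with $c$ a suitable constant; its Fourier transform is a triangle function supported in $[-1,1]$ after normalization. Then
$$\int_{-T}^T |A(it)|^2\,dt \le \int_\R w(t/T)|A(it)|^2\,dt = \sum_{m,n\le N} a_m\overline{a_n}\int_\R w(t/T)(n/m)^{it}\,dt = T\sum_{m,n} a_m\overline{a_n}\,\hat w\!\left(\tfrac{T}{2\pi}\log\tfrac{m}{n}\right),$$
up to the normalization of the Fourier transform.

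\emph{Bounding the terms.} The diagonal $m=n$ contributes $\ll T\sum|a_n|^2$. For $m\neq n$, write $m=n+\ell$ with $\ell\ge1$, using symmetry to handle the case $m<n$. Then $\log(m/n)\ge \ell/(n+\ell)\ge \ell/(2N)$, provided we restrict to $n,m\le N$. Because $\hat w$ vanishes outside $[-1,1]$, only pairs with $\frac{T}{2\pi}\log(m/n)\le 1$ survive. This forces $\ell\ll N/T$. Each surviving term is bounded by $T|a_n||a_{n+\ell}|\,\|\hat w\|_\infty$. Summing over $n$ and over $1\le \ell\le CN/T$ gives
$$T\sum_{1\le \ell\le CN/T}\sum_n |a_n||a_{n+\ell}|.$$

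\emph{Main obstacle.} The one mild point is matching the range $\ell\le N/T$ exactly as stated, rather than $\ell\le CN/T$ for some constant $C$. There are two ways to handle it. The first is to rescale the weight, choosing $\hat w$ supported in a smaller interval $[-\epsilon,\epsilon]$ so that the constant becomes $\le 1$. This is harmless, since it only changes the implied constant in the $T\sum|a_n|^2$ and $T\sum|a_n||a_{n+\ell}|$ terms through $\|\hat w\|_\infty$ and the lower bound of $w$ on $[-1,1]$. The second is to split the range $\ell\le CN/T$ into $O(C)$ blocks and bound each block by the stated sum via a shift argument.

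The rescaling route is cleaner. Concretely, we take $w(t)=\left(\frac{\sin(\epsilon t)}{\epsilon t}\right)^2$ scaled so that $w\ge c_\epsilon>0$ on $[-1,1]$, which holds for small fixed $\epsilon$. Then $\hat w$ is supported in $|\xi|\le 2\epsilon/(2\pi)$, and the support condition gives $\ell\le 2N\cdot 2\epsilon/T\le N/T$ once $\epsilon\le 1/4$. Dividing by $c_\epsilon$ completes the argument. Alternatively, one may simply cite Theorem 9.4 of \cite{iwaniec_kowalski}, or the Montgomery–Vaughan form of the Hilbert inequality, which yields this statement directly.
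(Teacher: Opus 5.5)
Your Fej\'er-kernel majorant argument is correct, including the rescaling with $\epsilon\le 1/4$ that pins the range to $\ell\le N/T$. It is the standard proof of the estimate that the paper does not prove itself but simply cites as Lemma 7.1 of \cite{iwaniec_kowalski}. Only your closing aside is off: in the paper's own references, Theorem 9.4 of \cite{iwaniec_kowalski} is the Hal\'asz--Montgomery inequality, and the Montgomery--Vaughan bound $\sum_n |a_n|^2(2T+O(n))$ cannot capture the correlation term $\sum_{\ell}\sum_n |a_n||a_{n+\ell}|$, so neither yields this lemma directly.
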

\begin{proof}
    See Lemma 7.1 of \cite{iwaniec_kowalski}.
\end{proof}

\begin{lemma}[Hal\'asz--Montgomery]{\label{lem:hm}}
  Let $N, T \geq 10$ and let $(a_n)$ be a sequence of complex numbers. Let $A(s) = \sum_{n\leq N} a_n n^{-s}$ and $\mathcal{T}\subseteq [-T, T]$ be a sequence of well-spaced points (well-spaced in the sense that for $t, t'\in \mathcal{T}$, $|t-t'| \geq 1)$.
  Then
  $$\sum_{t\in \mathcal{T}}\abs{A(it)}^2 \ll (N+|\mathcal{T}| T^{\frac{1}{2}})\log(T) \sum_{n\leq N}|a_n|^2.$$
\end{lemma}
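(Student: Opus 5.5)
This is the classical Halász--Montgomery inequality, and the plan is to prove it by duality followed by a pointwise bound on the resulting kernel. By the duality principle (equivalently, Cauchy--Schwarz applied to the bilinear form), it suffices to show that for any complex numbers $(b_t)_{t\in\mathcal{T}}$,
$$\sum_{n\leq N}\Bigl|\sum_{t\in\mathcal{T}} b_t n^{-it}\Bigr|^2 \ll (N+|\mathcal{T}|T^{\frac12})\log(T)\sum_{t\in\mathcal{T}}|b_t|^2 .$$
Rather than use the sharp cutoff $n\leq N$, I will insert a smooth majorant $w(n/N)$, where $w\geq \1_{[0,1]}$ is smooth and compactly supported in $(-1,2)$ (or, more conveniently, a Fejér-type weight with nonnegative Fourier transform). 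Expanding the square gives
$$\sum_{t,t'} b_t\overline{b_{t'}}\, K(t-t'), \qquad K(u)=\sum_{n} w(n/N)\, n^{-iu}.$$
Applying $|b_tb_{t'}|\leq \frac12(|b_t|^2+|b_{t'}|^2)$ then reduces the problem to the bound
$$\max_{t}\sum_{t'\in\mathcal{T}}|K(t-t')| \ll (N+|\mathcal{T}|T^{\frac12})\log T .$$

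For the diagonal term $t=t'$ we have trivially $K(0)\ll N$. For the off-diagonal terms we have $1\leq |u|\leq 2T$. Writing $K$ via Mellin inversion as $\frac{1}{2\pi i}\int_{(2)}\zeta(s+iu)\,\tilde w(s)N^s\,ds$ and shifting the contour to $\Re s=\frac12$, we pick up a pole at $s=1-iu$. Its residue is $\tilde w(1-iu)N^{1-iu}$, which decays rapidly in $|u|$, giving a contribution $\ll N|u|^{-A}$. The remaining integral is $\ll N^{1/2}\int |\zeta(\tfrac12+i(u+v))||\tilde w(\tfrac12+iv)|\,dv$. Using the convexity bound $\zeta(\frac12+i\tau)\ll (1+|\tau|)^{1/4}$, or the classical $\ll|\tau|^{1/2}$ bound, this integral is $\ll N^{1/2}T^{1/2}$. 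With the weaker bound the result needs care, which is why the $\log T$ factor in the statement is useful.

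Summing over $t'$, the residue terms contribute $\ll N\sum_{k\geq1}k^{-A}\ll N$ by well-spacing, and the contour terms contribute $\ll |\mathcal{T}|N^{1/2}T^{1/2}$. This is not yet the claimed form, since a factor $N^{1/2}$ remains. The main obstacle is removing it. For this I would instead approximate $K(u)$ directly by the approximate functional equation, or by van der Corput / the truncated formula $\sum_{n\leq N}n^{-iu}=\frac{N^{1-iu}}{1-iu}+O(|u|^{1/2}\log|u|)$, valid for $N\gg |u|$. This gives $|K(u)|\ll N/|u| + T^{1/2}\log T$. The first term sums to $\ll N\log T$ over the well-spaced $t'$, and the second gives $|\mathcal{T}|T^{1/2}\log T$. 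When $N$ is smaller than $|u|$, the truncated formula still holds with error $O(|u|^{1/2}\log|u|)$ via the approximate functional equation for $\zeta(iu)$ (partial sums of $\zeta$ on the line $\Re s=0$ have size $\ll|u|^{1/2}\log|u|$), which again suffices. Combining the diagonal and off-diagonal bounds yields the lemma; the citation route is Theorem 9.6 / Lemma 9.x of \cite{iwaniec_kowalski}, which I would give as a reference alongside the sketch.
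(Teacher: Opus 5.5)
Your final argument is correct; it is the standard proof. After duality (with the sharp cutoff $n\le N$ you do not need the smooth weight $w$), the kernel is $K(u)=\sum_{n\le N}n^{-iu}$ with $u=t-t'$. For $1\le |u|\le 2T$ one has $K(u)\ll N/|u|+|u|^{1/2}\log(2+|u|)$ uniformly in $N$. Summing over the well-spaced $t'$ gives $N\log T+|\mathcal{T}|T^{1/2}\log T$, plus the diagonal $O(N)$.

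The paper gives no argument for this lemma. It only cites Theorem 9.4 of \cite{iwaniec_kowalski} (not 9.6), so your sketch supplies the proof behind that citation.

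Your argument has the same shape as the paper's own proof of \cref{lem:exp_pair_hm}. That proof uses duality, then Kusmin--Landau when $|t-t'|\ll N$ (giving $N/|t-t'|$, hence a total $N\log$), then a uniform bound for the remaining $t'$. The only difference is that uniform bound:
\begin{itemize}
\item You use $|u|^{1/2}\log|u|$. This is independent of $N$ and produces the $|\mathcal{T}|T^{1/2}$ term. It is most easily obtained without $\zeta(iu)$ or the approximate functional equation: split into dyadic blocks $M$, apply Kusmin--Landau to blocks with $M\gg|u|$ (a geometric total $\ll N/|u|$), and apply van der Corput's second-derivative test to blocks with $M\ll|u|$ (each $\ll|u|^{1/2}$, with $O(\log(2+|u|))$ such blocks).
\item The paper's variant instead uses the exponent pair $(\frac16,\frac23)$ on a single dyadic block, giving $T^{1/6}N^{1/2}$. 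This beats $T^{1/2}$ when $N<T^{2/3}$, which is why the paper needs the variant in Case Ib of the proof of \cref{thm:smooth_f_thm}.
\end{itemize}

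You should drop the Mellin/contour-shift paragraph entirely. Shifting to $\Re s=\frac12$ unavoidably leaves a factor $N^{1/2}$ in the off-diagonal terms, so that route cannot give the stated form; you correctly abandon it. The remarks attached to it are also inaccurate:
\begin{itemize}
\item Convexity gives $N^{1/2}T^{1/4+o(1)}$ there, not $N^{1/2}T^{1/2}$.
\item The $\log T$ in the statement comes from $\sum_{k\le 2T}1/k$ over well-spaced points and from the $\log|u|$ in the partial-sum bound. It is not compensation for a weak zeta bound.
\end{itemize}
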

\begin{proof}
    See Theorem 9.4 of \cite{iwaniec_kowalski}.
\end{proof}

In general, we would like the diagonal contribution to be the dominant term, so $N > \abs{\mathcal{T}}T^{\frac{1}{2}}$. Even for $\abs{\mathcal{T}}$ quite small (suppose $\abs{\mathcal{T}} = o(T^{\epsilon})$ for $\epsilon > 0$ small), we would require $N > T^{\frac{1}{2}+\epsilon}$. To gain a bit of flexibility beyond this barrier, we give a variant of the Hal\'asz--Montgomery inequality using exponent pairs.

\begin{lemma}{\label{lem:exp_pair_hm}}
  Let $N, T \geq 10$ and let $(a_n)$ be a sequence of complex numbers. Let $A(s) = \sum_{N\leq n\leq 2N} a_n n^{-s}$ and $\mathcal{T}\subseteq [-T, T]$ be a sequence of well-spaced points (well-spaced in the sense that for $t, t'\in \mathcal{T}$, $|t-t'| \geq 1)$.
  Then,
  $$\sum_{t\in \mathcal{T}}\abs{A(it)}^2 \ll \left(N\log N + \abs{\mathcal{T}}T^{\frac{1}{6}}N^{\frac{1}{2}}\right)\sum_{N\leq n\leq 2N} |a_n|^2.$$
\end{lemma}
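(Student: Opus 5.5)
Following the proof of \cref{lem:hm}, I would use the duality (Halász--Montgomery) principle and then bound the off-diagonal terms with the exponent pair $(\frac16,\frac23)$ rather than the trivial/mean-value estimate. By duality it suffices to show, for any complex numbers $(c_t)_{t\in\mathcal{T}}$,
$$\sum_{N\leq n\leq 2N}\abs{\sum_{t\in\mathcal{T}} c_t n^{-it}}^2 \ll \left(N\log N + \abs{\mathcal{T}}T^{\frac16}N^{\frac12}\right)\sum_{t}|c_t|^2.$$
To make the sums smooth I would insert a smooth weight $w(n/N)$, with $w\geq 0$ supported on $[1/2,3]$ and $w\equiv 1$ on $[1,2]$, and expand the square:
$$\sum_{t,t'} c_t\overline{c_{t'}}\, S(t-t'),\qquad S(u)=\sum_n w(n/N)n^{-iu}.$$
The diagonal $t=t'$ contributes $\ll N\sum|c_t|^2$.

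For the off-diagonal terms $u=t-t'$ satisfies $1\leq |u|\leq 2T$, using the well-spacing. I would estimate $S(u)$ in two regimes. When $|u|\leq \epsilon N$ (or more generally $|u|\ll N$), Poisson summation or the Kusmin--Landau first-derivative test, combined with partial summation against $w$, gives $S(u) = \int w(x/N)x^{-iu}dx + O(\text{small})$. The integral is $\ll N/|u|^{A}$ for any $A$, by repeated integration by parts, and in any case the bound $S(u)\ll N/|u|$ suffices. When $|u|$ is large, I would apply the exponent pair $(\frac16,\frac23)$, which follows from van der Corput's $B A B$ process applied to $(0,1)$, to the phase $f(x)=\frac{u}{2\pi}\log x$ with $|f'|\asymp |u|/N$. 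This gives
$$S(u)\ll (|u|/N)^{\frac16}N^{\frac23} + N/|u| \ll T^{\frac16}N^{\frac12} + N/|u|,$$
where partial summation removes the weight $w$.

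Combining the two regimes gives $|S(t-t')|\ll N/|t-t'| + T^{\frac16}N^{\frac12}$. Then, using $|c_t\overline{c_{t'}}|\leq \frac12(|c_t|^2+|c_{t'}|^2)$, each $t$ contributes
$$|c_t|^2\sum_{t'\neq t}\left(\frac{N}{|t-t'|}+T^{\frac16}N^{\frac12}\right)\ll |c_t|^2\left(N\log T + \abs{\mathcal{T}}T^{\frac16}N^{\frac12}\right),$$
where the harmonic sum uses the well-spacing.

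The main obstacle I anticipate is the $\log$ factor: the lemma claims $N\log N$ rather than $N\log T$. I would handle this by splitting $|u|\leq N$, where the decay of the integral (for example $N/|u|^2$ after two integrations by parts, which is valid while $|u|\ll N$, where Poisson is dominated by its zero frequency) sums to $O(N)$, from $|u|>N$. In the latter range, $N/|u|\leq 1\leq T^{\frac16}N^{\frac12}$ can be absorbed into the second term. Checking uniformly that the exponent pair bound really applies in the required range of $u$, with constants independent of $N$ and $T$, is the main technical point. Finally, since $w\geq \1_{[N,2N]}$, the smoothed sum dominates the sharp sum, so removing $w$ is harmless and duality gives the stated inequality.
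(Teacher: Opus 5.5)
Your proposal is correct and follows the paper's proof: duality, a diagonal term of size $O(N)$, the first-derivative (Kusmin--Landau) bound $N/|u|$ for $1\le |u|\le cN$, and the exponent pair $(\frac16,\frac23)$, which gives $T^{\frac16}N^{\frac12}$ for $cN<|u|\le 2T$. The $\log T$ issue you flag does not arise if you use $N/|u|$ only for $|u|\le cN$, as the paper does, since that harmonic sum gives $N\log N$ directly; your smooth weight, which yields $O(N)$ in that range through the rapid decay of the Mellin transform of $w$, is a harmless refinement.
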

\begin{proof}
    By the duality principle (see Chapter 7, Theorem 6 in \cite{ten_lecs}), it is enough to prove that 
    \begin{equation}{\label{eq:duality_sufficient}}
        \sum_{N\leq n\leq 2N}\abs{\sum_{t\in \mathcal{T}} b_t n^{it}}^2 \ll \left(N\log N + \abs{\mathcal{T}}T^{\frac{1}{6}}N^{\frac{1}{2}}\right)\sum_{t\in\mathcal{T}} |b_t|^2
    \end{equation}
    for every sequence of coefficients $(b_t)_{t\in \mathcal{T}}$. We open up the square on the left-hand side of \eqref{eq:duality_sufficient} and interchange the sums to obtain
    \begin{align*}
        \sum_{N\leq n\leq 2N}\abs{\sum_{t\in \mathcal{T}} b_t n^{it}}^2 & = \sum_{t_1, t_2\in \mathcal{T}}b_{t_1}\overline{b_{t_2}}\sum_{N\leq n\leq 2N} n^{i(t_1-t_2)}\\
        &\ll \underbrace{\max_{t_2\in\mathcal{T}} \sum_{t_1\in \mathcal{T}} \abs{\sum_{N\leq n\leq 2N}n^{i(t_1-t_2)}}}_{\Sigma} \sum_{t\in \mathcal{T}}\abs{b_{t}}^2,
    \end{align*}
    where in the second line we have applied the inequality $|b_{t_1}\overline{b_{t_2}}| \ll |b_{t_1}|^2 + |b_{t_2}|^2$, used the symmetry in $t_1, t_2$ of $n^{i(t_1-t_2)}$, and used the triangle inequality to pull out a maximum over $t_2$. It will be enough to prove that $\Sigma \ll \left(N\log N + \abs{\mathcal{T}}T^{\frac{1}{6}}N^{\frac{1}{2}}\right)$. Now, in $\Sigma$, fix the value of $t_2$ attaining the maximum. 
    
    Observe that the partial sum of zeta can be interpreted as an exponential sum with phase $\psi(n) = -\frac{t_1-t_2}{2\pi}\log(n)$. The interval has length $N$ and the derivative of the phase satisfies 
    $$\abs{\psi'(n)} = \frac{1}{2\pi}\frac{|t_1-t_2|}{n} \asymp \frac{|t_1-t_2|}{N}.$$

    With this derivative estimate, we will split into cases depending on the range of $t_1$ relative to the fixed $t_2$. 
    
    \textbf{Case I: } Suppose $t_1 = t_2$, in which case the contribution is $O(N)$. 

    \textbf{Case II: } Suppose that $1 \leq |t_1 - t_2| \leq cN$, for an absolute constant $c > 0$. Here, $c$ is chosen sufficiently small so that the nearest integer to $\abs{\psi'(n)}$ is 0. Hence, by the Kusmin--Landau theorem (Theorem 2.1 of \cite{gk91}), 
    $$\sum_{N\leq n\leq 2N} e(\psi(n)) \ll \frac{N}{|t_1-t_2|}.$$ 
    
    Note that because the set $\mathcal{T}$ is well-spaced and $t_2$ is fixed, for any integer $1\leq m\leq cN$, there is a bounded number of $t_1$ for which $\lfloor \abs{t_1-t_2}\rfloor = m$. 
    Hence, we can bound the contribution of these $t_1$ to $\Sigma$ by
    $$\ll \sum_{1\leq m\leq cN} \frac{N}{m} \ll N\log N.$$

    \textbf{Case III: } Lastly suppose $cN < |t_1 - t_2| \leq 2T$. Again interpret the sum over $n$ as an exponential sum, and observe that because we have $\abs{\psi'(n)} \gg 1$, we may use the fact that $\left(\frac{1}{6}, \frac{2}{3}\right)$ is an exponent pair (see Chapter 3 of \cite{gk91}), to bound 
    \begin{align*}
        \sum_{N\leq n\leq 2N} e(\psi(n)) &\ll \left(\frac{|t_1-t_2|}{N}\right)^{\frac{1}{6}} N^{\frac{2}{3}} \ll T^{\frac{1}{6}}N^{\frac{1}{2}}.
    \end{align*}
    Crudely bounding the number of such $t_1$ by all of $\mathcal{T}$, we obtain the bound 
    $$\ll \abs{\mathcal{T}} T^{\frac{1}{6}}N^{\frac{1}{2}}.$$
    Hence \eqref{eq:duality_sufficient} holds.
\end{proof}

We will also collect some results concerning Dirichlet polynomials supported on primes, including some $L^2$ bounds and how often such polynomials can assume large values.

\begin{lemma}{\label{lem:prime_hm}}
  Let $P, T \geq 10$ and let $(a_p)$ be a sequence of complex numbers supported on primes. Let $P(s) = \sum_{P\leq p \leq 2P} \frac{a_p}{p^s}$ and let $\mathcal{T}\subseteq [-T, T]$ be a well-spaced set.
  Then 
  $$\sum_{t\in \mathcal{T}}\abs{P(it)}^2 \ll \left(\frac{P}{\log P} + \abs{\mathcal{T}} T^{\frac{1}{2}} P^{\frac{1}{20}} \log T\right)\sum_{P \leq p \leq 2P}|a_p|^2.$$
\end{lemma}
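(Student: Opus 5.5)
We argue by duality, as in the proof of \cref{lem:exp_pair_hm}. It suffices to show that for every sequence $(b_t)_{t\in\mathcal{T}}$,
$$\sum_{P\leq p\leq 2P}\abs{\sum_{t\in\mathcal{T}} b_t p^{it}}^2 \ll \left(\frac{P}{\log P} + \abs{\mathcal{T}} T^{\frac12}P^{\frac1{20}}\log T\right)\sum_{t}|b_t|^2 .$$

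\textbf{Reduction to prime sums.} Expand the square and use $|b_{t_1}\overline{b_{t_2}}|\ll |b_{t_1}|^2+|b_{t_2}|^2$ together with symmetry. This reduces the claim to bounding
$$\Sigma=\max_{t_2}\sum_{t_1\in\mathcal{T}}\abs{\sum_{P\leq p\leq 2P}p^{i(t_1-t_2)}}.$$
We smooth the prime sum by passing to $\sum \Lambda(n) n^{iu}$ with partial summation, at the cost of a factor $1/\log P$. Prime powers contribute only $O(P^{1/2})$, which is harmless.

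\textbf{Case $t_1=t_2$.} This gives the diagonal term $\pi(2P)-\pi(P)\ll P/\log P$.

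\textbf{Case $u=t_1-t_2\neq 0$, $|u|\leq 2T$.} Write the sum via the explicit formula:
$$\sum_{P\le n\le 2P}\Lambda(n)n^{iu}=\frac{(2P)^{1+iu}-P^{1+iu}}{1+iu}-\sum_{|\gamma|\le Z}\frac{(2P)^{\rho+iu}-P^{\rho+iu}}{\rho+iu}+O\!\left(\frac{P\log^2(PT)}{Z}\right).$$

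The main term is $\ll P/|u|$. After dividing by $\log P$ and summing over well-spaced $t_1$ with $|u|\ge 1$, it contributes $\ll \sum_{m\geq1} P/(m\log P)$. Restricted to $m\le P$ this is $O(P)$, which is too big, so this term needs more care.

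The better route is to split according to the size of $|u|$ and use zero-density estimates:
\begin{itemize}[leftmargin=*]
\item For $|u|\le P^{c}$, use the Vinogradov--Korobov zero-free region. This gives $\sum_{p}p^{iu}\ll P/(|u|\log P)+P\exp(-(\log P)^{3/5-\epsilon})$. Well-spacing then makes the total over such $t_1$ at most $P/\log P$ times a $\log$ factor. To avoid that $\log$ loss, I would instead handle this range with a large sieve for primes in the form of a Brun--Titchmarsh-type bound (Montgomery's $\sum_p|\sum_t b_tp^{it}|^2$ with a Selberg sieve weight). This gives exactly $P/\log P$ for the diagonal-type contribution from $|u|\le P^{1-\epsilon}$.
\item For larger $|u|$, use a Heath-Brown or Vaughan decomposition of $\Lambda$ into type I and type II sums of length $\le P$. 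Bound each piece by exponent-pair and Kusmin--Landau estimates as in \cref{lem:exp_pair_hm}, or more crudely by the classical bound $\sum_{n\sim P}\Lambda(n)n^{iu}\ll |u|^{1/2}P^{1/20}$-type estimates. Here $P^{1/20}$ is a lossy allowance: it absorbs the divisor-function losses and the non-optimal exponents. Summing over at most $\abs{\mathcal{T}}$ values of $t_1$ gives $\abs{\mathcal{T}}T^{1/2}P^{1/20}\log T$.
\end{itemize}

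\textbf{Main obstacle.} The hardest step is getting the sharp diagonal $P/\log P$ rather than $P$ or $P\log P$. Summing the main term $P/|u|$ over spaced $u$ naturally loses a logarithm. My first attempt would be the Selberg-sieve majorant: replace $\1_{p}$ by a nonnegative sieve weight $\big(\sum_{d\mid n,\,d\le D}\lambda_d\big)^2$ with $D=P^{1/40}$. This makes the diagonal $\ll P/\log D \asymp P/\log P$. The off-diagonal terms become smooth sums over $n\equiv 0\ (\mathrm{mod}\ [d_1,d_2])$ of $n^{iu}$. These are bounded by Kusmin--Landau when $|u|\ll P/D^2$, giving terms of size $\ll D^2\cdot P/(|u|D^{2})$ summed appropriately. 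When $|u|$ is large, they are bounded by the van der Corput-type estimate $\ll D^2(|u|^{1/2}+P/|u|^{1/2})$. The total off-diagonal is then $\ll \abs{\mathcal{T}}T^{1/2}D^2\log T\le \abs{\mathcal{T}}T^{1/2}P^{1/20}\log T$. This sieve-weight approach is the one I would commit to, since it avoids zeros entirely. The delicate point is controlling the small-$|u|$ off-diagonal contribution, where $\sum_{|u|\le P}P/|u|$ must be offset by the sieve savings.
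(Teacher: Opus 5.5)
There is a genuine gap, and it is exactly the point you flag as ``delicate'': the off-diagonal contribution from small $|u|=|t_1-t_2|$. In the sieve approach you commit to, you keep the sharp cutoff $P\le n\le 2P$ and bound each $[d_1,d_2]$-piece by Kusmin--Landau in absolute value. That discards the sieve saving, which appears only in the main-term combination $\sum_{d_1,d_2}\lambda_{d_1}\lambda_{d_2}/[d_1,d_2]\ll 1/\log D$. You are left with a bound of order $P(\log D)^{O(1)}/|u|$ per $t_1$, hence $P(\log P)^{O(1)}$ after summing over well-spaced $t_1$. Worse, even if you extract the main term correctly, with a sharp cutoff it equals $P^{1+iu}\frac{2^{1+iu}-1}{1+iu}\sum_{d_1,d_2}\frac{\lambda_{d_1}\lambda_{d_2}}{[d_1,d_2]}$, which has size $\asymp P/(|u|\log P)$ because $|2^{1+iu}-1|\ge 1$. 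Your Schur-type bound $\max_{t_2}\sum_{t_1}$ then costs $\frac{P}{\log P}\log\min(|\mathcal{T}|,P)$. For instance, with $\mathcal{T}=\{1,\dots,\lfloor P^{1/2}\rfloor\}$ and $T=P^{1/2}$ this is $\asymp P$, whereas the target is $\frac{P}{\log P}+P^{4/5}\log P$. So no refinement of the Kusmin--Landau/van der Corput bookkeeping can close the argument as you have structured it.

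The missing idea is to use positivity twice. Since $|\sum_t b_t n^{it}|^2\ge 0$, you may majorize not only $\1_{\mathbb{P}}(n)$ by an upper-bound sieve weight but also $\1_{[P,2P]}(n)$ by a smooth bump $g(n/P)$. The paper uses linear fundamental-lemma weights $\lambda_d^+$ with $D=P^{1/20}$; your Selberg weights with $D=P^{1/40}$ would serve equally well. Then, for each $d$, Mellin inversion and a shift to $\Re s=0$ using the convexity bound for $\zeta$ give $\sum_k g(dk/P)k^{-iu}=\widetilde{g}(1-iu)(P/d)^{1-iu}+O\bigl((2+|u|)^{1/2}\log(2+|u|)\bigr)$ uniformly in $u$. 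Hence the kernel is $P^{1-iu}\widetilde{g}(1-iu)\sum_d\lambda_d^+/d+O(DT^{1/2}\log T)$. The sieve sum is $\ll 1/\log P$ and $\widetilde{g}(1-iu)\ll(1+|u|)^{-100}$, so summing over well-spaced $t_1$ costs only $O(1)$ and yields $P/\log P$. The error term gives $|\mathcal{T}|P^{1/20}T^{1/2}\log T$ directly, with no case split in $|u|$. (If you insisted on the sharp cutoff, you would need a Montgomery--Vaughan Hilbert-type inequality in place of the triangle inequality to handle the $1/u$ kernel.)

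Your fallback branches are not usable either. The ``classical'' pointwise bound $\sum_{n\sim P}\Lambda(n)n^{iu}\ll |u|^{1/2}P^{1/20}$ is not known: for $|u|\le P^{4/5}$, say, it would beat the $P^{1/2+o(1)}$ that even RH gives. And the claim that a Brun--Titchmarsh-type large sieve yields exactly $P/\log P$ for $|u|\le P^{1-\epsilon}$ is essentially the statement you are trying to prove.
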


\begin{proof}
  By the duality principle, it suffices to prove that for an arbitrary sequence of complex numbers $(b_t)_{t\in\mathcal{T}}$, we have 
  \begin{equation}{\label{eq:duality_primes}}
      \sum_{P \leq p \leq 2P} \abs{\sum_{t\in \mathcal{T}} b_t p^{it}}^2 \ll \left(\frac{P}{\log P} + \abs{\mathcal{T}} T^{\frac{1}{2}} P^{\frac{1}{20}} \log T\right)\sum_{t\in \mathcal{T}}|b_t|^2.
  \end{equation}
  The first thing we do is to set up our sieve with the specified parameters.
  In the relevant dyadic range $P \leq n\leq 2P$ we may majorize the prime indicator function $\1_{\P}(n) \leq v(n)$, where
  $$v(n) = \sum_{\substack{d|(n, \Pi(Z))\\d\leq D}} \lambda_d^+$$
  with appropriate sieve weights (we refer to the fundamental lemma 6.3 in \cite{iwaniec_kowalski} for a construction of these weights). Here, $\Pi(Z) = \prod_{p\leq Z} p$, and we take $D = P^{\frac{1}{20}}$ and $Z = P^{\frac{1}{40}}$.
  Furthermore, we upper bound the dyadic support condition in \eqref{eq:duality_primes} by $g\left(\frac{p}{P}\right)$, where $g$ is a smooth function, compactly supported in $[0.9, 2.1]$ and $g(x) = 1$ whenever $x\in [1,2]$.
  We apply the sieve majorant to get 
  $$\sum_{P \leq p \leq 2P}\abs{\sum_{t\in \mathcal{T}} b_t p^{it}}^2 \leq \sum_n v(n) g\left(\frac{n}{P}\right) \abs{\sum_{t\in \mathcal{T}} b_t n^{it}}^2$$
  
  We can now open up the square, and exchange sums to bound the above by
  \begin{equation}{\label{eq:t1_t2_sum}}
      \ll \sum_{t_1, t_2\in \mathcal{T}} \abs{b_{t_1}b_{t_2}} \underbrace{\abs{\sum_n g\left(\frac{n}{P}\right) n^{i(t_2-t_1)} \sum_{\substack{d|(n, \Pi(Z))\\d\leq D}} \lambda_d^+}}_{\abs{Q(i(t_2-t_1))}}.
  \end{equation}
    Write $t = t_2-t_1$ and consider the entire polynomial over $n=dk$ as a function of one variable $t$.
  We swap the $n$ and $d$ sums and get that
  \begin{equation}{\label{eq:def_Q}}
      Q(it) = \sum_{d\leq P^{\frac{1}{20}}} \lambda_d^+ d^{-it}\sum_{k} g\left(\frac{k}{(P/d)}\right)k^{-it}
  \end{equation}
  We know that for smooth and compactly supported $g$, this inner sum over $k$ is bounded by estimates for the $\zeta$ function.
  We use Mellin inversion on $g$ to write
  \begin{align*}
      \sum_k g\left(\frac{k}{(P/d)}\right) k^{-it} &= \frac{1}{2\pi i}\int_{\Re(s)=2}\widetilde{g}(s)\left(\frac{P}{d}\right)^s\sum_k k^{-s-it}ds\\
      &= \frac{1}{2\pi i}\int_{\Re(s)=2} \widetilde{g}(s)\left(\frac{P}{d}\right)^s \zeta(s+it)ds.
  \end{align*}
  
    We shift the contour from $\Re(s)=2$ to $\Re(s)=0$ picking up the simple pole of $\zeta$ at $1-it$. We apply the usual convexity bound to the zeta function along the line $\Re(s)=0$ and use rapid decay of $\widetilde{g}(s)$ to compute the integral as 
  $$\sum_{k} g\left(\frac{k}{(P/d)}\right)k^{-it} = \widetilde{g}(1-it) \left(\frac{P}{d}\right)^{1-it}+ O\left( (2+|t|)^{\frac{1}{2}}\log(2+|t|)\right).$$
    Plugging this back into the definition \eqref{eq:def_Q}, we get cancellation of the $d^{-it}$ phases, and get an upper bound on $\abs{Q(it)}$ of the form
  $$P^{1} \abs{\widetilde{g}(1-it) \sum_{d\leq P^{\frac{1}{20}}}\frac{\lambda_d^+}{d}} + P^{\frac{1}{20}} T^{\frac{1}{2}}\log T.$$
  Again by the fundamental lemma of the sieve, we have
  \begin{equation}{\label{eq:linear_sieve}}
      \sum_{d\leq P^{\frac{1}{20}}}\frac{\lambda_d^+}{d} \ll \frac{1}{\log Z} \ll \frac{1}{\log P}.
  \end{equation}
  
  We then plug these bounds into the sum over $t_1, t_2$ in \eqref{eq:t1_t2_sum}. Using the inequality $\abs{b_{t_1}b_{t_2}}\leq \abs{b_{t_1}}^2+\abs{b_{t_2}}^2$, we may trivially bound the second term. Lastly, note that $\sum_{t\in \mathcal{T}}\abs{\widetilde{g}(1-i(t_2-t_1))}$ will be summable off of the diagonal contribution $t_1 = t_2$. We may trivially bound the diagonal contribution (this is admissible), and for the off-diagonal pairs $(t_1, t_2)$, precisely because compact support of $g$ ensures that $\widetilde{g}(1-i(t_2-t_1))$ decays faster than, say, $(1+|t_2-t_1|)^{-100}$, and because $\mathcal{T}$ is well-separated, we recover the desired bound.
\end{proof}
\begin{remark}
    The $P^{\frac{1}{20}}$ parameter is sufficient here, and is made explicit to avoid introducing additional small parameters, but for other applications one should note that this power can be changed at the cost of the implicit constant in \eqref{eq:linear_sieve}. 
\end{remark}

\begin{lemma}{\label{lem:prime_hm_2}}
  Let $P, T \geq 10$ and let $(a_p)$ be a sequence of complex numbers supported on primes. Let $P(s) = \sum_{P\leq p \leq 2P} \frac{a_p}{p^s}$ and let $\mathcal{T}\subseteq [-T, T]$ be a well-spaced set.
  Then for every $\epsilon > 0$,
  $$\sum_{t\in \mathcal{T}}\abs{P(it)}^2 \ll_{\epsilon} \left(P + \abs{\mathcal{T}}P \exp\left(-\frac{\log P}{(\log T)^{\frac{2}{3}+\epsilon}}\right)(\log T)^2\right)\sum_{P\leq p\leq 2P}\frac{\abs{a_p}^2}{\log P}.$$
\end{lemma}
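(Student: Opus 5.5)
The plan is to follow the duality argument of \cref{lem:prime_hm}, but to handle the prime sum directly with the prime number theorem in place of a sieve majorant. By the duality principle it suffices to show, for arbitrary coefficients $(b_t)_{t\in\mathcal{T}}$,
$$\sum_{P\leq p\leq 2P}\log p\,\abs{\sum_{t\in\mathcal{T}} b_t p^{it}}^2 \ll_\epsilon \left(P + \abs{\mathcal{T}}P\exp\left(-\frac{\log P}{(\log T)^{\frac23+\epsilon}}\right)(\log T)^2\right)\sum_{t}|b_t|^2 .$$
Here we have weighted by $\log p \asymp \log P$, which accounts for the factor $1/\log P$ in the statement. We then majorize the dyadic condition by a smooth bump $g(n/P)$ as before and replace $\log p$ by $\Lambda(n)$. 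Prime powers contribute only $O(P^{1/2})$ per pair, and this can be absorbed, or avoided entirely by positivity since $\Lambda\geq \log p\,\1_{\P}$. Opening the square and using $|b_{t_1}b_{t_2}|\leq |b_{t_1}|^2+|b_{t_2}|^2$, the problem reduces to bounding $\max_{t_2}\sum_{t_1}\abs{S(t_1-t_2)}$, where
$$S(u)=\sum_n \Lambda(n)g\left(\frac nP\right)n^{iu},\qquad |u|\leq 2T.$$

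The key step is an estimate for $S(u)$. Mellin inversion gives $S(u)=\frac{1}{2\pi i}\int_{(2)}\widetilde g(s)P^s\left(-\frac{\zeta'}{\zeta}\right)(s-iu)\,ds$. We shift the contour to the left of $\Re(s-iu)=1$, onto the Vinogradov--Korobov zero-free region $\sigma\geq 1-c(\log(|\tau|+3))^{-2/3}(\log\log(|\tau|+3))^{-1/3}$. In that region one has $\zeta'/\zeta\ll(\log |\tau|)^{2/3+o(1)}$, which is certainly $\ll(\log T)^2$. The rapid decay of $\widetilde g$ allows us to truncate to $|\Im s|\leq (\log T)^{A}$, so all heights that appear are $\asymp |u|+O(1)$ and are at most about $T$. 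The pole at $s=1+iu$ produces the main term $\widetilde g(1+iu)P^{1+iu}$. The shifted integral is bounded by
$$P\exp\left(-c\frac{\log P}{(\log T)^{2/3}(\log\log T)^{1/3}}\right)(\log T)^2 \ll_\epsilon P\exp\left(-\frac{\log P}{(\log T)^{\frac23+\epsilon}}\right)(\log T)^2,$$
where we use $\epsilon$ to absorb both the constant $c$ and the $\log\log$ factor.

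Given this, we assemble the pieces as follows. The diagonal term $t_1=t_2$ contributes $\ll P$. For the off-diagonal main terms, $\widetilde g(1+iu)\ll (1+|u|)^{-100}$ and the well-spacing of $\mathcal{T}$ give $P\sum_{t_1}(1+|t_1-t_2|)^{-100}\ll P$. The error terms, counted trivially, contribute at most $\abs{\mathcal{T}}$ times the bound above. Together these yield the claimed inequality.

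The main obstacle is the contour shift. We must pass to a contour that follows the zero-free region with width varying in the height, and we must control $\zeta'/\zeta$ uniformly there, including for small $|u|$, where the pole of $\zeta$ at $s=1$ lies near the contour. For small $|u|$ I would simply use the classical zero-free region, which is stronger at bounded height; alternatively, for $|u|\leq 1$, I would bound $S(u)$ trivially by $P$, since such $t_1$ are $O(1)$ in number. For the bound on $\zeta'/\zeta$ I would cite the standard Vinogradov--Korobov estimates (e.g.\ Ivi\'c, or Iwaniec--Kowalski Ch.~8) rather than reprove them. The generous $(\log T)^2$ factor leaves ample slack.
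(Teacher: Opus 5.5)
Your strategy is essentially the argument behind the result the paper cites (Lemma 11 of Matom\"aki--Radziwi\l{}\l{}): duality, open the square, then insert a prime number theorem with Vinogradov--Korobov error for $\sum\Lambda(n)n^{iu}$. The smooth weight is a good choice. It makes the near-diagonal main terms sum to $O(P)$, which matches the bare $P$ in the first term of the statement. With a sharp cutoff and absolute values on $\frac{(2P)^{1+iu}-P^{1+iu}}{1+iu}$ you would get $P\log T$ instead.

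The gap is in your pointwise bound for $S(u)$. The decay of $\widetilde g$ is only polynomial, $\widetilde g(\sigma+i\tau)\ll_B(1+|\tau|)^{-B}$. It therefore cannot make the part of the contour with $|\Im s|\gg T$ negligible compared with $P\exp(-\log P/(\log T)^{2/3+\epsilon})$. On that part, $\zeta(\cdot-iu)$ is evaluated at heights $\gg T$, where the zero-free region has width only $(\log|\Im s|)^{-2/3+o(1)}$. Once $\log P\gg(\log T)^{5/3}$, these heights dominate.

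So truncating to $|\Im s|\le(\log T)^A$ is not justified, and the estimate you claim for $S(u)$ is not available in that range. For bounded $T$ it would say $S(u)=\widetilde g(1+iu)P^{1+iu}+O(P^{1-c'})$ uniformly for $1\le|u|\le 2T$. Via the Mellin transform $-\widetilde g(w)\frac{\zeta'}{\zeta}(w-iu)$ in $P$, this would give a zero-free strip $\Re s>1-c'$.

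The repair is cheap: split the shifted contour at $|\Im s|=T$ instead.
\begin{itemize}
\item The part $|\Im s|\le T$ only sees heights $\le 3T$ for $\zeta(\cdot-iu)$, so it gives the bound you wrote.
\item The part $|\Im s|>T$ is bounded using $|P^s|\le P$, the estimate $\frac{\zeta'}{\zeta}\ll\log(|\Im s|+3T)$ on the contour, and the decay of $\widetilde g$. This gives $O_B(PT^{1-B}\log T)$.
\end{itemize}
That extra term is not of the claimed shape. However, since $|\mathcal{T}|\le 2T+1$, it contributes only $O(PT^{2-B}\log T)=O(P)$ after summing over $t_1$, and is absorbed by the diagonal term. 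Also, for $T$ bounded in terms of $\epsilon$, where your absorption of $c$ into $\epsilon$ fails, the lemma is trivial by Cauchy--Schwarz. With these modifications your argument proves the lemma.

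Your concern about small $|u|$ is unnecessary. The contour stays at distance $\gg 1$ from $s=1+iu$, and $\frac{\zeta'}{\zeta}(s-iu)$ is bounded there.
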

\begin{proof}
    See Lemma 11 of \cite{matomaki_radziwill}.
\end{proof}

The difference between \cref{lem:prime_hm} and \cref{lem:prime_hm_2} is the behavior of the second (off-diagonal) term. The first lemma is limited in that on the 1-line, it only gives savings for $P \gg T^{0.55+\epsilon}$, say, and the second is limited in the sense that the exceptional set $\mathcal{T}$ must be very small in order to give any savings.

\begin{lemma}{\label{lem:large_values}}
    Let $P, T \geq 10$ and let $(a_p)$ be a sequence of complex numbers supported on primes, with $|a_p|\leq 1$. Let $P(s) = \sum_{P\leq p \leq 2P}\frac{a_p}{p^s}$. 
    Let $\mathcal{T}\subseteq [-T,T]$ be a sequence of well-spaced points such that $\abs{P(1+it)} \geq V^{-1}$ for every $t\in \mathcal{T}$.
    Then 
    $$\abs{\mathcal{T}} \ll T^{2\frac{\log V}{\log P}} V^2 \exp\left(2\frac{\log T}{\log P} \log\log T\right).$$
\end{lemma}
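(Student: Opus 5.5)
This is the standard large-values estimate (cf. Lemma 8 of \cite{matomaki_radziwill}), and I would prove it by the moment method. The idea is to raise $P(s)$ to a power $\ell$ so that $P(s)^\ell$ has length roughly $T$, and then apply the mean value theorem (\cref{lem:mvt}) in a discretized form.

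\textbf{Choice of exponent.} Set $\ell = \lceil \log T/\log P\rceil$, so that $P^\ell \geq T$ and $(2P)^\ell \ll T\cdot 2^{\ell}P$. Write
$$P(s)^\ell = \sum_{P^\ell\leq n\leq (2P)^\ell} \frac{b_n}{n^{s}}, \qquad b_n = \sum_{\substack{p_1\cdots p_\ell = n\\ P\leq p_i\leq 2P}} a_{p_1}\cdots a_{p_\ell}.$$
Since the $a_p$ are $1$-bounded, $|b_n|\leq \ell!$. Moreover $b_n$ is supported on integers with exactly $\ell$ prime factors, and there are at most $\ell!$ ordered factorizations. It follows that
$$\sum_n |b_n|^2 n^{-2} \leq \ell!\Big(\sum_{P\leq p\leq 2P} \frac{1}{p^2}\Big)^{\ell} \leq \ell!\, P^{-\ell}.$$

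\textbf{Discrete mean value step.} Every $t\in\mathcal{T}$ satisfies $|P(1+it)|^{2\ell}\geq V^{-2\ell}$. Because the points are well-spaced, a discrete mean value theorem applies: either the Gallagher/Sobolev-type passage from \cref{lem:mvt} (bounding $|A(t)|^2$ by a local integral of $|A|^2+|A A'|$), or directly the large-sieve form (Theorem 9.1 of \cite{iwaniec_kowalski}, discrete version). This gives
$$|\mathcal{T}|\,V^{-2\ell} \leq \sum_{t\in\mathcal{T}} |P(1+it)|^{2\ell} \ll (T + (2P)^\ell)\,\ell!\,P^{-\ell}\log((2P)^\ell).$$
The logarithmic factor comes from the derivative term in the Sobolev argument. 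Since $P^\ell\geq T$, the factor $(T+(2P)^{\ell})P^{-\ell}$ is at most $2^{\ell+1}$. Therefore
$$|\mathcal{T}| \ll V^{2\ell}\, 2^{\ell}\,\ell!\,\ell\log P.$$

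\textbf{Converting to the stated form.} Using $\ell\leq \log T/\log P+1$, the main factor becomes
$$V^{2\ell}\leq V^2\, T^{2\log V/\log P}.$$
The remaining factor is $2^\ell \ell!\,\ell\log P \leq \exp(\ell\log \ell + O(\ell))\log P$. Since $\ell\leq \log T$, this is bounded by $\exp\big(2\tfrac{\log T}{\log P}\log\log T\big)$ up to the stray $\log P$ and constants.

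\textbf{Main obstacle.} The hard part is this bookkeeping of lower-order factors: the $\log P$ from the discrete MVT, the $+1$ in $\ell$, and $\ell!$ against $\ell^{\ell}$. The $\log P$ factor needs care when $\ell$ is small. I would first try to absorb it by working with the smoothed sum $\sum_t |P(1+it)|^{2\ell}$ via \cref{lem:hm}-free Gallagher-type arguments that lose no logarithm for dyadic supports. If that fails, I would use slack in the exponent: the factor $2$ in front of $\frac{\log T}{\log P}\log\log T$ is only needed for $\ell\log\ell$ plus the remaining small terms, and covers them when $T$ is large relative to $P$. In the complementary regime $P\geq T$, the bound is immediate from the $\ell=1$ case.
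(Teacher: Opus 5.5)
You are following the same route as the source the paper relies on. The paper gives no proof of its own; it cites Lemma 8 of Matom\"aki--Radziwi\l{}\l{}, which is this moment argument with $\ell=\lceil \log T/\log P\rceil$ and a discrete mean value theorem. As written, however, your closing step has a real hole, and the ``main obstacle'' you describe is self-inflicted.

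When you bound $\sum_{P\le p\le 2P}p^{-2}\le P^{-1}$, you discard the factor $(\log P)^{-\ell}$ that comes from the sparsity of primes, and you are left with $\abs{\mathcal{T}}\ll V^{2\ell}2^{\ell}\ell!\,\ell\log P$. For $\ell\ge 2$ (so $P<T$) the stray $\log P$ can be absorbed by the slack $(\log T)^{2\log T/\log P}$. For $\ell=1$ it cannot. Take, say, $P\ge e^{T}$ and $V\le(\log P)^2$. Then $\frac{\log T}{\log P}\log V$ and $\frac{\log T}{\log P}\log\log T$ are both $O(1)$, so the right-hand side of the lemma is $\ll V^2$, whereas your $\ell=1$ bound is $V^2\log P$. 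So your claim that the regime $P\ge T$ is ``immediate from the $\ell=1$ case'' is false for the estimate you derived, and the ``slack in the exponent'' fallback does not reach this regime.

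The fix is one line. By Chebyshev,
$$\sum_{P\le p\le 2P}p^{-2}\le P^{-1}\sum_{P\le p\le 2P}p^{-1}\le \frac{c}{P\log P},$$
so $\sum_n\abs{b_n}^2n^{-2}\le \ell!\,(c/(P\log P))^{\ell}$. Since $\ell\ge 1$, the factor $(\log P)^{-\ell}$ absorbs the $\ell\log(2P)$ from the discrete mean value theorem, and you get $\abs{\mathcal{T}}\ll V^{2\ell}\exp(\ell\log\ell+O(\ell))$.

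\begin{itemize}
\item For $\ell=1$ this is $\ll V^2$.
\item For $\ell\ge 2$ you have $\frac{\log T}{\log P}>\ell-1$ and $\log T>(\ell-1)\log 10$. Hence $\exp(\ell\log\ell+O(\ell))\ll\exp\bigl(2(\ell-1)\log((\ell-1)\log 10)\bigr)\le\exp\bigl(2\frac{\log T}{\log P}\log\log T\bigr)$.
\end{itemize}

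Combine this with $V^{2\ell}\le V^{2}T^{2\log V/\log P}$. That inequality holds because $V>1$ automatically when $\mathcal{T}\neq\emptyset$, since $\abs{P(1+it)}\le\sum_{P\le p\le 2P}p^{-1}<1$; you should state this. This proves the lemma.

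Your first suggested remedy also works. Apply Gallagher's lemma to $(P^{\ell})^{it}P(1+it)^{\ell}$. Its $t$-derivative only carries the weights $\log(n/P^{\ell})\in[0,\ell\log 2]$, so you get the discrete mean value bound with a factor $\ell$ in place of $\ell\log P$, even with your lossy $\ell!P^{-\ell}$. You offered this only as something to try, though, so the proposal as written does not close.
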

\begin{proof}
    See Lemma 8 of \cite{matomaki_radziwill}.
\end{proof}

In \cref{sec:3} we will additionally need $L^{\infty}$ bounds for some special Dirichlet polynomials associated to $\1_{\P}(n)$, $\mu(n)$, and the constant function 1 along the line $s=1+it$. 

\begin{lemma}\label{lem:pnt}
    Let $A, B > 0$ be given and let $X \geq 10$. Let $q\leq (\log X)^B$ and let $\chi$ be a Dirichlet character mod $q$.
    Let $P > 0$ be such that $\exp\left((\log X)^{\frac{2}{3}+\epsilon}\right) \leq P \leq X$, for fixed $\epsilon > 0$.
    Then for any $|t|\leq X$,
    $$\sum_{P\leq p\leq 2P}\frac{\chi(p)}{p^{1+it}} \ll_{A, B} \frac{\log X}{1+|t|} + (\log X)^{-A}.$$
\end{lemma}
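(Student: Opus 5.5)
The plan is to derive the bound from the prime number theorem for arithmetic progressions with the classical zero-free region, via partial summation in the prime sum. Write $\theta(u;\chi,t)=\sum_{p\leq u}\chi(p)p^{-it}\log p$. Partial summation gives
$$\sum_{P\leq p\leq 2P}\frac{\chi(p)}{p^{1+it}} = \int_P^{2P}\frac{d\,\theta(u;\chi,t)}{u\log u}.$$
So it suffices to understand $\psi(u,\chi)=\sum_{n\leq u}\Lambda(n)\chi(n)$. Prime powers contribute $O(P^{-1/2})$ and can be discarded. By Siegel--Walfisz, which holds for $q\leq(\log u)^{B}$ since $\log u\geq(\log X)^{2/3}$ gives $q\leq(\log u)^{3B/2}$, we have
$$\psi(u,\chi)=\1_{\chi=\chi_0}\,u+O_{A',B}\bigl(u\exp(-c\sqrt{\log u})\bigr).$$
The Siegel constant is ineffective, which is acceptable since the implied constants only depend on $A,B$. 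Under the assumption $\log P\geq(\log X)^{2/3+\epsilon}$, the term $\exp(-c\sqrt{\log P})\leq\exp(-c(\log X)^{1/3})$ is far smaller than $(\log X)^{-A}$.

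Next, write $d\psi(u,\chi)=\1_{\chi_0}du+dE(u)$ and insert the factor $u^{-it}$.

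\textbf{Main term.} This is nonzero only for principal $\chi$, where it equals
$$\int_P^{2P}\frac{u^{-it}}{u\log u}\,du\ll \frac{1}{\log P}\min\Bigl(1,\frac{1}{|t|}\Bigr),$$
by integrating $u^{-1-it}$ by parts. This is $\ll\frac{\log X}{1+|t|}$. The non-coprime primes $p\mid q$ are excluded by $\chi_0$ and number at most $\log q$, each contributing $\leq 1/P$, so they are negligible.

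\textbf{Error term.} Integrate $\int_P^{2P}u^{-1-it}(\log u)^{-1}\,dE(u)$ by parts. The boundary terms are $\ll|E(u)|/(P\log P)$. The integral term is
$$\int_P^{2P}|E(u)|\,\Bigl|\frac{d}{du}\frac{u^{-it}}{u\log u}\Bigr|\,du\ll \frac{1+|t|}{P}\int_P^{2P}\frac{|E(u)|}{u}\,du\ll (1+|t|)\exp(-c\sqrt{\log P}).$$

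The main obstacle is the factor $(1+|t|)$ here, since $|t|$ can be as large as $X$. With only Siegel--Walfisz, $(1+|t|)\exp(-c\sqrt{\log P})$ is not small when $|t|\approx X$, because $\sqrt{\log P}\leq\sqrt{\log X}$. This is why the hypothesis $P\geq\exp((\log X)^{2/3+\epsilon})$ appears: it matches the Vinogradov--Korobov zero-free region, not the classical one.

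So I would instead apply the explicit formula for twisted sums directly, uniformly in $t$. That is, bound
$$\sum_{P<n\leq 2P}\Lambda(n)\chi(n)n^{-it}=\1_{\chi_0}\int_P^{2P}u^{-it}\,du+O\bigl(P\exp(-c(\log P)^{3/5}(\log\log P)^{-1/5})+\dots\bigr),$$
using the Vinogradov--Korobov zero-free region for $L(s,\chi)$ in the form $\sigma\geq1-c/\bigl(\log q+(\log(|t|+3))^{2/3}(\log\log)^{1/3}\bigr)$, together with a Siegel-zero treatment for the ineffective exceptional character. This is carried out via Perron's formula with height $T_0\approx X^{2}$, shifting to the edge of the zero-free region near height $t$. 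The saving is then
$$\exp\Bigl(-c\frac{\log P}{(\log X)^{2/3}(\log\log X)^{1/3}}\Bigr)\leq\exp\bigl(-c(\log X)^{\epsilon/2}\bigr)\ll_A(\log X)^{-A}.$$
This works because $\log P\geq(\log X)^{2/3+\epsilon}$, with the $q$-dependence absorbed since $\log q\ll_B\log\log X$ and Siegel's bound rules out a real zero within $q^{-\eta}$ of $1$. Finally, partial summation with the weight $1/(u\log u)$, which has no $t$-oscillation, converts this into the claimed bound. The main term again produces $\frac{1}{\log P}\min(1,|t|^{-1})\ll\frac{\log X}{1+|t|}$.
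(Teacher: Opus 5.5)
Your final argument is correct, but it takes a different route from the paper's.

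\textbf{The paper's route.} For $|t|>100$ the paper applies Perron's formula directly to $\log L(1+s+it,\chi)$, whose coefficients are already $\chi(p)p^{-1-it}$ on primes. It deliberately truncates at the small height $T=(|t|+1)/2<|t|-1$. This keeps $|\Im(1+s+it)|\geq 1$ along the whole contour, so the pole of $L(s,\chi_0)$ and any exceptional zero are never met. No residues or Siegel are needed, and the term $\frac{\log X}{1+|t|}$ is exactly the Perron truncation error. Shifting to the Vinogradov--Korobov edge with $|\log L|\ll(\log X)^2$ then gives the $(\log X)^{-A}$ term.

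\textbf{Your route.} You take a large Perron height and run the explicit formula for $\sum_{P<n\le u}\Lambda(n)\chi(n)n^{-it}$ uniformly in $u\in[P,2P]$. The pole of $-L'/L(\cdot,\chi_0)$ gives the main term $\int_P^u v^{-it}\,dv$, and Siegel handles the exceptional zero. Only then do you pass to $\chi(p)p^{-1-it}$ by partial summation against the non-oscillating weight $1/(u\log u)$. That is the right way to avoid the $(1+|t|)$ factor that sank your first attempt. It costs more bookkeeping: bounds for $L'/L$ in the zero-free region, residues, prime powers, and uniformity in $u$. In exchange it gives the slightly sharper first term $\frac{1}{\log P}\min(1,|t|^{-1})$.

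\textbf{Two minor points.}
\begin{enumerate}
\item The displayed error $P\exp(-c(\log P)^{3/5}(\log\log P)^{-1/5})$ is not valid uniformly for $|t|\le X$. When $|t|$ is much larger than $P$, the zero-free region at height $|t|$ governs. The saving you actually use, $\exp\bigl(-c\log P/((\log X)^{2/3}(\log\log X)^{1/3})\bigr)$, is the correct one.
\item Siegel is dispensable in your route. The residue at an exceptional zero $\beta$ is $-\int_P^u v^{\beta-1-it}\,dv$. After partial summation it contributes $\ll\frac{1}{\log P}\min(1,|t|^{-1})$, the same size as the main term.
\end{enumerate}
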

We follow the strategy in Lemma 2 of \cite{matomäki2015noteliouvillefunctionshort}. 
\begin{proof}
    Supposing that $|t| \leq 100$, the bound is immediate by the    triangle inequality. Here and later, define $\log L(w, \chi)$ to be the branch of the logarithm that agrees with
    $$\log L(w,\chi) = \sum_p\sum_{k\geq 1}\frac{\chi(p)^k}{k p^{kw}}.$$
    
    If $|t| > 100$, then we employ the same strategy as in Matom\"aki--Radziwi\l{}\l{}: again apply Perron's formula with $\sigma = \frac{1}{\log X}$ to obtain
    $$\sum_{P\leq p\leq 2P} \chi(p) p^{-1-it} = \frac{1}{2\pi i}\int_{\sigma-iT}^{\sigma+iT} \log L(1 + s + it, \chi) \frac{(2P)^s - P^s}{s} ds + O\left(\frac{\log X}{1+|t|}\right),$$
    taking $T = \frac{(|t|+1)}{2} < |t|-1$. Strictly speaking, applying Perron's formula to $\log L$ also incurs an error term of $O\left(\frac{1}{P}\right)$ coming from the contribution of higher prime powers. 
    However, this is certainly absorbed by the $O_A((\log X)^{-A})$ we will shortly incur, since $P\geq \exp\left((\log X\right)^{2/3+\epsilon})$.

    Crucially we know now that the imaginary part of $1+s+it$ is greater than 1 in absolute value, and hence we are far enough away from any possible exceptional zero (and in the case $\chi$ is principal, the pole of $L(s, \chi)$ at $s=1$).
    We pull this contour to the edge of the Vinogradov--Korobov zero free region (see Montgomery \cite{ten_lecs}, Chapter 9), to 
    $$\Re(s) = \sigma_0 = -\frac{c}{\log q + (\log X)^{\frac{2}{3}}(\log\log X)^{\frac{1}{3}}},$$
    where $c > 0$ is some small constant. 
    Note that because $q \leq (\log X)^B$ this zero-free region is essentially as strong as it is for the Riemann zeta function (since $\log q \ll_B \log\log X$).
    For $s$ such that $\Re(s)$ is in the zero-free region and $1\leq \abs{\Im(s) + t} \leq 2X$, we can bound $\abs{\log L(1+s+it, \chi)} \ll_B (\log X)^2$ (see Theorem 11.4 in \cite{montgomery_vaughan}). 
    Because $\epsilon > 0$, we know that $P^{\sigma_0} \ll_{A, B} (\log X)^{-A}$, and hence we conclude that 
    $$\sum_{P\leq p\leq 2P} \chi(p) p^{-1-it} \ll_{A, B} \frac{\log X}{1+|t|} + (\log X)^{-A}.$$
\end{proof}

\begin{lemma}{\label{lem:pnt_mobius}}
    Fix $A, B > 0$ and let $X\geq 10$. Let $q\leq (\log X)^B$  and let $\chi$ be a Dirichlet character mod $q$. For $|t| \leq X^{24}$, we have 
    $$\sum_{X \leq n\leq 2X} \frac{\chi(n)\mu(n)}{n^{1+it}} \ll_{A, B} (\log X)^{-A}.$$
\end{lemma}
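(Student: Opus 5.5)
We prove the bound by Perron's formula and a contour shift into the Vinogradov--Korobov zero-free region, as in \cref{lem:pnt}. The one difference is that we work with $1/L(w,\chi)$ in place of $\log L(w,\chi)$. We have
$$\sum_{n} \frac{\chi(n)\mu(n)}{n^{w}} = \frac{1}{L(w,\chi)}, \qquad \Re(w) > 1.$$
Apply a truncated Perron formula with $\sigma = \frac{1}{\log X}$ and truncation height $T_0 = X^{30}$, comfortably larger than $|t|\le X^{24}$:
$$\sum_{X\leq n\leq 2X}\frac{\chi(n)\mu(n)}{n^{1+it}} = \frac{1}{2\pi i}\int_{\sigma-iT_0}^{\sigma+iT_0} \frac{1}{L(1+s+it,\chi)}\,\frac{(2X)^s-X^s}{s}\,ds + O\Big(\frac{\log X}{T_0}\Big).$$
The coefficients $\chi(n)\mu(n)n^{-1}$ are bounded by $1/n$, so the standard truncation error is $O(X^{-1}\cdot X\log X/T_0)$, which is negligible.

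Next, shift the contour to $\Re(s)=\sigma_0 = -c/(\log q + (\log X')^{2/3}(\log\log X')^{1/3})$, where $X' \asymp X^{31}$ accounts for heights up to $|\Im(s)+t|\le 2T_0$. Since $\log X'\asymp \log X$ and $\log q\ll_B\log\log X$, on the new line we have
$$X^{\sigma_0} \le \exp\big(-c'(\log X)^{1/3}(\log\log X)^{-1/3}\big)\ll_A (\log X)^{-A}.$$
The integrand has no poles inside the rectangle: $1/L$ is holomorphic wherever $L$ has no zeros, and the pole of $L$ at $w=1$ for principal $\chi$ produces a zero of $1/L$, not a pole. The point $s=0$ is a removable singularity, because $((2X)^s-X^s)/s$ is entire. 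To bound the integrand we use the standard estimate $1/L(w,\chi)\ll (\log(q(|\Im w|+2)))^{C}$ in the Vinogradov--Korobov region with $|\Im w|\geq 1$. With the $1/|s|$ decay, the vertical integral is then $\ll (\log X)^{C+1}X^{\sigma_0}$, and the horizontal segments at height $\pm T_0$ contribute $\ll (\log X)^C/T_0$. Both are $\ll_{A,B}(\log X)^{-A}$.

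\textbf{Main obstacle: the Siegel zero.} The case where $1+s+it$ lies near the real axis, i.e.\ $|t|$ small, is the crux. Here a possible exceptional zero of $L(w,\chi)$ for real $\chi$ enters, and the region near $w=1$ is delicate. In \cref{lem:pnt} this was sidestepped by the trivial bound for $|t|\le 100$, but here no trivial bound is available: that would only give $O(1)$. The plan is to use Siegel's theorem. Since $q\le(\log X)^B$, any exceptional zero $\beta$ satisfies $1-\beta\gg_\epsilon q^{-\epsilon}\ge(\log X)^{-B\epsilon}$, which is ineffective but admissible for the implicit constant depending on $A,B$. Consequently $L(w,\chi)$ has no zeros in
$$\Re(w) \ge 1-\frac{c(\epsilon)}{(\log X)^{B\epsilon}\,(\log X)^{2/3}}$$
even near the real axis. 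This is exactly the statement that the Siegel--Walfisz theorem gives $\sum_{n\le x}\mu(n)\chi(n)\ll_A x(\log x)^{-A}$ uniformly for $q\le(\log x)^B$. We also need $1/L(w,\chi)\ll_\epsilon q^{\epsilon}(\log X)^{C}$ near the real axis in this region.

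The cleanest route is therefore to first prove the unweighted bound $\sum_{n\le x}\chi(n)\mu(n)n^{-it}\ll_{A,B} x(\log x)^{-A}$ for $x\asymp X$ and $|t|\le X^{24}$ by the above Perron argument. That argument has a uniform zero-free region of width $\gg(\log X)^{-2/3-\epsilon}$ with $\epsilon<1/(3B)$, and this width suffices because $X^{-(\log X)^{-0.9}}$ is still $\ll_A(\log X)^{-A}$. We then obtain the $1/n$-weighted dyadic sum by partial summation, which costs nothing.
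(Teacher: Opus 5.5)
Your proposal is correct and is essentially the paper's argument: truncated Perron, a contour shift into the Vinogradov--Korobov zero-free region with a polylogarithmic bound for $1/L$ there, Siegel's theorem to neutralize a possible exceptional zero, and partial summation to restore the $1/n$ weight. The only difference is organizational. You fold Siegel's bound $1-\beta\gg_\epsilon q^{-\epsilon}$ into a single uniform zero-free region, so one contour works for all $|t|\le X^{24}$, whereas the paper splits into cases (whether $|t|$ exceeds the truncation height, and whether an exceptional zero exists) and then bounds the resulting $X^{\beta}$ term via Siegel; your single-contour version is, if anything, the tidier of the two.
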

The proof is essentially exactly as for the Siegel-Walfisz theorem (and consequently the constant here is ineffective).
\begin{proof}
    By Perron's formula, 
    $$\sum_{X \leq n\leq 2X}\mu(n) \chi(n) n^{-it} = \frac{1}{2\pi i}\int_{\sigma-iT}^{\sigma+iT} \frac{1}{L(s + it, \chi)} \frac{(2X)^{s} - X^s}{s}ds + O\left(\frac{X (\log X)^2}{T}\right)$$
    for $\sigma = 1+\frac{1}{\log X}$.
    We pull the contour to the edge of the Vinogradov--Korobov zero free region for $L$-functions.
    Note that $|\Im(s+it)| \ll T+X^{24}$, and choose $T = \exp\left((\log X)^{\frac{1}{2}}\right)$, say. By our bound for $q$, the zero-free region has width $\gg_B \frac{1}{(\log X)^{\frac{2}{3}}(\log\log X)^{\frac{1}{3}}}$, which is enough. Because $|s + it - \beta| \geq \frac{1}{\log q}$, we know $\abs{\frac{1}{L(s+it, \chi)}} \ll \log (q\abs{\Im(s+it)+2}) \ll \log X$ (again see Theorem 11.4 in \cite{montgomery_vaughan}). 
    If $L(s, \chi)$ has no exceptional zero or if $|t| > T$ (and hence the contour misses the potential exceptional zero) we conclude that the above is 
    $$\ll_{A,B} X(\log X)^{-A}.$$
    If $|t|\leq T$ and $L(s, \chi)$ has an exceptional zero $\beta$, then we pull the contour back to the line with real part $\beta + \frac{1}{\log X}$.
    The main contribution will come from
    $(2X)^{\beta} - X^{\beta}$.
    Recall that via Siegel's theorem (see Chapter 21 of \cite{davenport}), any real zero $\beta$ of $L(s, \chi)$ for real nonprincipal $\chi$ satisfies 
    $$\beta \leq 1-c(\epsilon) q^{-\epsilon}$$
    for any $\epsilon > 0$ (and $c(\epsilon)$ is some ineffective constant depending on $\epsilon$).   
    Because $q$ is in the admissible Siegel-Walfisz range, the $(2X)^{\beta} - X^{\beta}$ term contributes at most $\ll_{A, B} X(\log X)^{-A}$.
    As the contour is far enough away from the exceptional zero, the bound for $L(s+it, \chi)^{-1}$ still holds.
    We then obtain the desired bound on the 1-line by partial summation and the fact $\frac{1}{n}\asymp \frac{1}{X}$. 
\end{proof}

Recall that we fixed the constants $\kappa$ and $\nu$ in \eqref{eq:fixed_constants}.
\begin{lemma}{\label{lem:exponent_pairs}}
Fix $B > 0$ and let $X \geq 10$. Let $q \leq (\log X)^B$, and let $\chi$ be a Dirichlet character modulo $q$.
    For all $2 \leq |t| \leq X^{24}$, 
    $$\sum_{X\leq n\leq 2X} \chi(n) n^{-it} \ll |t|^{\kappa} X^{\nu-\kappa + o(1)} + \frac{X}{1+|t|}.$$
\end{lemma}

\begin{proof}
We can split the sum as
\begin{align}{\label{eq:dirichlet_periodicity}}
    \sum_{X\leq n\leq 2X} \chi(n) n^{-it} &= \sum_{a \pmod{q}} \chi(a)\sum_{\substack{X\leq n\leq 2X\\ n\equiv a \pmod{q}}} n^{-it} \nonumber\\
    &= \sum_{a \pmod{q}} \chi(a) \sum_{\frac{X-a}{q}\leq m \leq \frac{2X-a}{q}} (a+mq)^{-it},
\end{align}
and now, for each fixed $a$, the inner sum on the right-hand side of \eqref{eq:dirichlet_periodicity} is an exponential sum over an interval in $m$, with phase $\psi(m) = -\frac{t}{2\pi}\log(a+mq)$. This interval has length $\frac{X}{q}$, and the derivative of the phase satisfies $\abs{\psi'(m)} = \frac{1}{2\pi}\frac{|t|q}{a+mq} \asymp \frac{|t|}{\abs{\frac{X}{q}}}$. As before, depending on whether or not $\frac{q|t|}{X}\gg 1$, we consider two cases.

\textbf{Case I: } Firstly, suppose that $2 \leq |t| \leq c \frac{X}{q}$, where $c$ is taken sufficiently small such that the nearest integer to $\abs{\psi'(m)}$ is 0 rather than 1. Then, we may apply the Kusmin--Landau bound to see that the inner sum is 
$$\ll \frac{\frac{X}{q}}{1+|t|},$$
where in the denominator we have used the lower bound on $|t|$. This gives the second term in the statement of \cref{lem:exponent_pairs}. 

\textbf{Case II: } Suppose we are now in the complementary region $c\frac{X}{q} < |t| < X^{24}$. The power-saving bound is essentially an immediate consequence of the fact that $\left(\kappa, \nu\right)$ is an exponent pair. This follows from the classical exponent pair $\left(\frac{1}{2^k-2}, 1-\frac{k-1}{2^k-2}\right)$ with $k=25$, which comes from inductively applying the van der Corput $A$ and $B$ processes (again see Chapter 3 of \cite{gk91}).

For an exponent pair $(\kappa, \nu)$, provided that $\abs{\psi'(m)} \gg 1$, we get an upper bound of $\left(\frac{|t| q}{X}\right)^{\kappa} \left(\frac{X}{q}\right)^{\nu}$. Hence the inner sum on the right-hand side of \eqref{eq:dirichlet_periodicity} is bounded by
$$\ll \sum_{a \pmod{q}} \left(|t|^{\kappa} \left(\frac{X}{q}\right)^{\nu-\kappa} + \frac{\frac{X}{q}}{1+|t|}\right).$$
After summing over residue classes $a\pmod q$, we get
$$\ll q^{1-\nu+\kappa} |t|^{\kappa} X^{\nu-\kappa} + \frac{X}{1+|t|}.$$
Because $q \leq (\log X)^{B}$, this $q$ prefactor is absorbed by the $X^{o(1)}$ allowance. 
\end{proof}

\subsection{Integers with typical factorizations}{\label{sec:2.2}}
We now describe the procedure for restricting $n\in [X, 3X]$ to integers with \emph{typical factorizations}.
More or less we use the same choices as in \cite{matomaki_radziwill}, except crucially in the last range.
As in \cite{average_chowla}, we will introduce an additional parameter $X^{\frac{1}{2}} \leq X_0 \leq X$, with the idea being that sometimes we want to count integers with a typical factorization up to something slightly smaller than $3X$, but we want the number of primes in the factorization to be fixed in a way that depends only on the ambient scale $X$.
Our choice of $X_0$ will be
$$\begin{cases}
    X_0 = X, &\text{sections 3,4}\\
    X_0 = X^{\frac{1}{2}}, &\text{section 5}
\end{cases}.$$

We additionally choose $J = J(X_0)$ ranges of primes $[P_j, Q_j]$ subject to the following spacing constraints:
$$\frac{\log\log Q_j}{\log P_{j-1}-1} \leq \frac{\eta}{4j^2},$$
$$\frac{\eta}{j^2}\log P_j \geq 8\log Q_{j-1} + 16\log j.$$
Here $\eta$ is a parameter to be specified later, but should be thought of as a very small constant.
We may take $J$ to be the largest index for which $Q_j \leq \exp\left( \frac{\log X_0}{\log\log X_0}\right)$, and we retain the choices of $P_j, Q_j$ for $j\leq J$ made in \cite{matomaki_radziwill}.
Namely we choose (for $j\leq J$)
$$P_j = \exp\left(j^{4j}(\log Q_1)^{j-1}(\log P_1)\right)$$
and 
$$Q_j = \exp\left(j^{4j+2} (\log Q_1)^{j}\right).$$

\begin{definition}{\label{def:restricted_factors}}
    Let $\mathcal{S}_{P_1, Q_1, X_0, X}$ denote the set of integers up to $3X$ which contain at least one prime factor $p_j \in [P_j, Q_j]$ for each $1\leq j\leq J$.
\end{definition}

Now we have the following estimate.

\begin{proposition}{\label{prop:sieve_error}}
    Let $X^{\frac{1}{2}} \leq X_0 \leq X$, and let $P_1 < Q_1$. Let $\mathcal{S} = \mathcal{S}_{P_1, Q_1, X_0, X}$ be as above. We have
    $$\#\left\{n\leq 3X: n\notin \mathcal{S}\right\} \ll X \frac{\log P_1}{\log Q_1}. $$
\end{proposition}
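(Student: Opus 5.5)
We bound the complement of $\mathcal{S}$ by a union bound over the index $j$. If $n \leq 3X$ is not in $\mathcal{S}$, then for some $1 \leq j \leq J$ the integer $n$ has no prime factor in $[P_j, Q_j]$. So it suffices to show
$$\#\{n\leq 3X : p\nmid n \ \text{for all } p\in[P_j,Q_j]\} \ll X\frac{\log P_j}{\log Q_j},$$
and then to check that $\sum_{j\leq J} \frac{\log P_j}{\log Q_j} \ll \frac{\log P_1}{\log Q_1}$.

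For the first step we use a standard upper bound sieve, such as the fundamental lemma (Lemma 6.3 of Iwaniec--Kowalski) or the Brun--Titchmarsh style bound for sifted sets. It gives
$$\#\{n\leq 3X : (n, \textstyle\prod_{P_j\leq p\leq Q_j} p)=1\} \ll X\prod_{P_j\leq p\leq Q_j}\left(1-\frac1p\right) + (\text{error}),$$
provided the sieve level $Q_j$ is small enough compared to $X$. Mertens' theorem turns the product into $\asymp \frac{\log P_j}{\log Q_j}$.

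The constraint $Q_J \leq \exp\left(\frac{\log X_0}{\log\log X_0}\right) \leq X^{o(1)}$, together with $X_0\geq X^{1/2}$, ensures $\log Q_j \leq \frac{\log X}{\log\log X}$. The fundamental lemma with sifting range $z=Q_j$ and level $D = X^{1/2}$ then has $s = \log D/\log z \gg \log\log X$. The error is therefore $X\frac{\log P_j}{\log Q_j}\, e^{-s}$ plus $O(D)$, and both are acceptable. Alternatively, one can avoid sieve machinery by an elementary Rankin/Selberg-type argument: sum $\1_{(n,\Pi)=1}$ against a completely multiplicative weight. Either route gives the displayed bound.

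For the sum over $j$, the explicit choices give
$$\frac{\log P_j}{\log Q_j} = \frac{j^{4j}(\log Q_1)^{j-1}\log P_1}{j^{4j+2}(\log Q_1)^j} = \frac{1}{j^2}\frac{\log P_1}{\log Q_1}.$$
Summing over $j$ yields $\ll \frac{\log P_1}{\log Q_1}\sum_j j^{-2} \ll \frac{\log P_1}{\log Q_1}$.

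The only delicate point is applying the sieve uniformly in $j$ up to $J$. This requires confirming that $Q_J$ is a small enough power of $X$ that the sieve remainder and the $e^{-s}$ loss are dominated, and the choice of $J$ via $Q_J\le \exp(\log X_0/\log\log X_0)$ is exactly what guarantees this.
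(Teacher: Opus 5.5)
Your proposal is correct and follows essentially the same route as the paper: a union bound over $j$, the fundamental lemma of the sieve with Mertens' theorem for each range $[P_j,Q_j]$, and the identity $\log P_j/\log Q_j = j^{-2}\log P_1/\log Q_1$ summed over $j$. Your check that $\log Q_J \le \log X/\log\log X$, which makes the sifting parameter $s \gg \log\log X$ and keeps the sieve remainder acceptable, is a detail the paper leaves implicit.
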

\begin{proof}
    For $j=1, \dots, J$, observe that by the fundamental lemma of the sieve, we have that the number of $n \leq 3X$ such that $p|n$ implies $p\notin [P_j, Q_j]$ is 
  $$\ll X\prod_{p\in [P_j, Q_j]} \left(1-\frac{1}{p}\right) \ll X\frac{\log P_j}{\log Q_j} \ll X\frac{1}{j^2}\frac{\log P_1}{\log Q_1}.$$
  Upon taking the sum over $j$ we obtain a bound on the number of $n \leq 3X$ which do not have a prime factor in every range $[P_j, Q_j]$.
\end{proof}

The main difference in the argument presented here will be the last range. Let $R > 1$ be a parameter to be chosen later. Define 
\begin{equation}{\label{eq:PQ_choices}}
    P = \exp\left(
    \frac{1}{R}\frac{(\log X)(\log\log\log X)}{\log\log X}\right) \quad\text{ and }\quad Q=3X
\end{equation}

We will need the following lemma before counting $n\in [X, 3X]$ without prime factors in these ranges:

\begin{lemma}{\label{lem:smooth_numbers}}
    Let $x \geq 10$, and let $\Psi(x,y)$ count the number of $y$-smooth integers up to $x$.
    Let $u = \frac{\log x}{\log y}$. Fix $\epsilon > 0$. Then as long as $y\leq x^{1-\epsilon}$ and $u,y \to \infty$, we have
    $$\Psi(x, y) = xu^{-u(1+o(1))}.$$
\end{lemma}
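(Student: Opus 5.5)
This is the classical de Bruijn--Hildebrand estimate, and the paper would simply cite it (e.g.\ Hildebrand--Tenenbaum, or Tenenbaum's book, Chapter III.5). If I were to prove it rather than cite it, I would derive the upper and lower bounds separately and show that both are of the shape $x u^{-u(1+o(1))}$.

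\textbf{Upper bound.} I would use Rankin's trick. For $0<\sigma<1$ we have
$$\Psi(x,y)\le \sum_{P^+(n)\le y} (x/n)^{\sigma}= x^\sigma\prod_{p\le y}(1-p^{-\sigma})^{-1}.$$
I would take $\sigma = 1-\frac{\log(u\log u)}{\log y}$. The Euler product is then controlled via Mertens and the prime number theorem:
$$\log\prod_{p\le y}(1-p^{-\sigma})^{-1}\le \log\log y + \sum_{p\le y}\frac{p^{1-\sigma}-1}{p}+O(1).$$
By partial summation this is $\log\log y + O\bigl(\tfrac{y^{1-\sigma}}{(1-\sigma)\log y}\bigr)=\log\log y+ O\bigl(\tfrac{u}{\log u}\bigr)$. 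Meanwhile
$$x^{\sigma}=x\,(u\log u)^{-u}=x\,u^{-u(1+o(1))}.$$
The $\log\log y$ term equals $\log\log x-\log u$. This is where I need $u$ not too small relative to $\log\log x$, and the hypothesis $y\le x^{1-\epsilon}$ alone does not guarantee it, so I would handle this range separately. When $u$ is large compared with $\log\log y$ the bound $x u^{-u(1+o(1))}$ follows. When $u$ is bounded-ish but tending to infinity slowly, I would use the Dickman asymptotic $\Psi(x,y)\sim x\rho(u)$, valid for $y\ge \exp((\log\log x)^{5/3+\epsilon})$ by Hildebrand, together with $\rho(u)=u^{-u(1+o(1))}$.

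\textbf{Lower bound.} I would count integers $n=p_1\cdots p_k m$ with $k=\lfloor u\rfloor-1$ primes $p_i\in(y^{1-\delta},y]$ and $m\le x/(p_1\cdots p_k)$ arbitrary but built to be $y$-smooth. The simplest version takes $m$ itself to be a product of small primes, or uses $\Psi(x,y)\ge \#\{p_1\cdots p_k m\}$ with $m\le y^{\,u-k(1-\delta)}$, so that $m<y$ is automatically $y$-smooth. The count is then
$$\frac{1}{k!}\Bigl(\sum_{y^{1-\delta}<p\le y}1\Bigr)^{k}\cdot \min m\text{-range}\ \approx\ \frac{x}{k!}\Bigl(\frac{\delta}{1}\Bigr)^{k}(\log y)^{-O(1)}\ \ge\ x\,u^{-u(1+o(1))},$$
where I take $\delta=1/\log u$. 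This uses Stirling's formula $k!=u^{u(1+o(1))}$. The factor $\delta^k=e^{-u\log\log u}=u^{-o(u)}$ is harmless.

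\textbf{Main obstacle.} The main obstacle is uniformity at the edge of the ranges: making the $(\log y)$ losses in both bounds negligible compared with $u^{o(u)}$ when $u$ grows very slowly. I would first check whether the paper only applies the lemma with $y=P$ from \eqref{eq:PQ_choices}, where $u\asymp R\log\log X/\log\log\log X$. In that regime $u\log u\asymp R\log\log X$ dominates all polylogarithmic losses, and it suffices to cite the standard theorem.
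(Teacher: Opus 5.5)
Your first sentence matches the paper: it gives no argument and simply cites Corollary 1.3 of Hildebrand--Tenenbaum. The self-contained sketch you offer in its place, however, has a genuine gap in the lower bound. With $k=\lfloor u\rfloor-1$ primes in $(y^{1-\delta},y]$ and $\delta=1/\log u$, the cofactor range $m\le x/(p_1\cdots p_k)$ extends up to $y^{u-k(1-\delta)}\ge y^{1+k\delta}$, and $k\delta\asymp u/\log u$. So ``$m<y$'' is false. The factor $x\delta^k/k!$ you write down is just $\sum x/(p_1\cdots p_k)$, so it counts all cofactors, smooth or not.

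If you instead impose $m\le y$, the count drops to about $\pi(y)^k\,y/k!\approx x\,y^{-\{u\}}/(k!(\log y)^k)$, a loss of $y^{\{u\}}(\log y)^k$. Already $(\log y)^k=u^{o(u)}$ requires $\log\log y=o(\log u)$. That fails in the paper's regime $y=P$, where $\log\log y\sim\log\log X$ but $\log u\sim\log\log\log X$. The $1/p$-weighting that produces $\delta^k$ is exactly what makes the cofactor long, so its smoothness must be controlled, not assumed.

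A clean fix is induction on $u$. We have $\Psi(x,y)\log x\ge\sum_{n\le x,\,P^+(n)\le y}\log n\ge\sum_{y^{1-\delta}<p\le y}\Psi(x/p,y)\log p$, and Mertens gives $\sum_{y^{1-\delta}<p\le y}\frac{\log p}{p}\ge\frac12\delta\log y$ once $\log y\ge C\log u$. Taking $\delta=1/\log u$, this yields inductively $\Psi(x,y)\ge x\exp(-u\log u-u\log\log u-O(u))$, with no loss of powers of $\log y$.

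Your closing remark is also mistaken. In the paper's regime $u\log u\sim R\log\log X$, so a factor $(\log y)^{C}=(\log X)^{C+o(1)}$ is of the \emph{same} order as $u^{u}=(\log X)^{R+o(1)}$ and shifts the exponent. Concretely, your Rankin bound gives only $\Psi(3X,P)\le X(\log X)^{-R+1+o(1)}$. The factor $\prod_{p\le y}(1-1/p)^{-1}\asymp\log y$ is a genuine overshoot of Rankin's method, so this is not $Xu^{-u(1+o(1))}$; with $R=2$, as in Section 4, it would give only $(\log X)^{-1+o(1)}$.

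So exactly where the paper uses the lemma (only the upper bound is ever used), neither of your elementary bounds suffices. Everything there rests on the cited asymptotic $\Psi(x,y)\sim x\rho(u)$ together with $\rho(u)=u^{-u(1+o(1))}$. That is a legitimate citation of the same depth as the paper's, but then your self-contained part contributes nothing in that range.

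The upper bound can be repaired by the standard refinement: write $\Psi(x,y)\log x=\sum\log n+\sum\log(x/n)$ and apply Rankin to $\sum_m\min(x/m,y)$. This replaces the factor $\log y$ by $y^{1-\sigma}\log y/\log x=\log u$. Separately, a harmless slip: $y^{1-\sigma}/((1-\sigma)\log y)=u\log u/\log(u\log u)\sim u$, not $u/\log u$.

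Finally, the lemma as stated omits the Hildebrand--Tenenbaum hypothesis $y\ge(\log x)^{1+\epsilon}$; the condition $y\le x^{1-\epsilon}$ is automatic once $u\to\infty$. For $y=(\log x)^a$ with $a<1$, the right-hand side is $x^{1-1/a+o(1)}\to0$ while $\Psi(x,y)\ge1$. Your construction silently needs $\pi(y)\gg u$ for this reason. This does not affect the paper's application, where $P$ exceeds every power of $\log X$.
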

Equivalently, the density of $x^{\frac{1}{u}}$-smooth numbers is $u^{-u + o(u)}$.
For a proof, see Corollary 1.3 of \cite{tenenbaum_smooth}.

\begin{lemma}{\label{lem:last_range_sieve_error}}
    Let $X \geq 10$ and let $P < Q$ be as specified in \eqref{eq:PQ_choices}. Then 
    $$\#\{n \leq 3X: p|n \implies p\notin [P,Q]\} \ll_R X (\log X)^{-{R}+o(1)}.$$
\end{lemma}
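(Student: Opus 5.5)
An integer $n\leq 3X$ with no prime factor in $[P,Q]=[P,3X]$ has all of its prime factors below $P$, i.e.\ it is $P$-smooth. So the plan is simply to bound
$$\#\{n\leq 3X: p\mid n \implies p\notin[P,Q]\} \leq \Psi(3X,P),$$
and then apply \cref{lem:smooth_numbers} with $x=3X$ and $y=P$. Since $P=X^{o(1)}$, we certainly have $y\leq x^{1-\epsilon}$, and both $y\to\infty$ and $u\to\infty$. That lemma then gives $\Psi(3X,P)= 3X\,u^{-u(1+o(1))}$, and what remains is to compute $u$ and evaluate $u^{-u}$.

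We have
$$u=\frac{\log 3X}{\log P} = (1+o(1))\,\frac{R\log\log X}{\log\log\log X},$$
so that
$$\log u = \log\log\log X - \log\log\log\log X + \log R + o(1) = (1+o(1))\log\log\log X,$$
where $R$ is fixed. It follows that
$$u\log u = (1+o(1))\,\frac{R\log\log X}{\log\log\log X}\cdot \log\log\log X = (R+o(1))\log\log X.$$
Therefore $u^{-u(1+o(1))} = \exp\left(-(R+o(1))\log\log X\right) = (\log X)^{-R+o(1)}$, which gives the claimed bound $\ll_R X(\log X)^{-R+o(1)}$.

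The only point requiring care is bookkeeping of the $o(1)$ terms. The $(1+o(1))$ in the exponent of \cref{lem:smooth_numbers} is multiplied by $u\log u\asymp R\log\log X$, so it contributes only $o(\log\log X)$ to the logarithm. That is harmless, because it is absorbed into the $(\log X)^{o(1)}$ factor. The lower-order correction $\log\log\log\log X$ in $\log u$ is also relative $o(1)$, so it is absorbed in the same way. The implied constant and the rate of the $o(1)$ depend on $R$, which is allowed. For bounded $X$ the statement is trivial, so we may assume $X$ is large, which ensures $u,y\to\infty$ as the lemma requires.
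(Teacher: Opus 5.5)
Your proof is correct and follows the same route as the paper. Like the paper, you reduce to counting $P$-smooth integers up to $3X$, apply \cref{lem:smooth_numbers} with $u \sim R\log\log X/\log\log\log X$, and check that $u^{-u(1+o(1))} = (\log X)^{-R+o(1)}$; your handling of the $o(1)$ terms is somewhat more careful than the paper's.
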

\begin{proof}
    If $n \leq 3X$ is not divisible by any prime in this range, $n$ is $P$-smooth. By \cref{lem:smooth_numbers}, the density of $X^{\frac{1}{u}}$-smooth numbers is $u^{-u+o(u)}$. Observe that $P = X^{\frac{1}{u}}$ with $u = R\frac{\log\log X}{\log \log \log X}$. So
$$u^{-u} = \left(\frac{1}{R} \frac{\log\log\log X}{\log\log X}\right)^{R\frac{\log\log X}{\log\log\log X}} \ll_R \frac{1}{(\log X)^{R+o(1)}}.$$
The claim follows. 
\end{proof}

Integers with no prime factor in the last range will not be excluded from $\mathcal{S}$, but rather we will manually remove these integers when appropriate in the argument. 

Later, in the proofs of \cref{thm:smooth_f_thm} and \cref{thm:general_f}, we will also want to add back the integers $n\notin\mathcal{S}$ (excluding the last interval $[P,Q]$) on the Dirichlet polynomial side in order to get improved savings from Hal\'asz theorem.

\begin{lemma}{\label{lem:add_back_sieve_error}}
    Let $X \geq h \geq 10$.
    Let $(a_n)$ be a $1$-bounded sequence of complex numbers.
    With $\mathcal{S}$ as above, if $Q_1\leq \min\left(h, \exp\left(\frac{\log X_0}{\log\log X_0}\right)\right)$ and $P_1 = (\log Q_1)^{C_1}$ for an absolute constant $C_1 > 0$, then
    $$\int_{-T}^T \abs{\sum_{\substack{X\leq n\leq 3X \\ n\notin \mathcal{S}}} a_n n^{-1-it}}^2 dt \ll \frac{T}{X} \frac{\log P_1}{\log Q_1} + \left(\frac{\log P_1}{\log Q_1}\right)^2.$$
\end{lemma}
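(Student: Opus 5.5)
The plan is to apply the improved mean value theorem, \cref{lem:IMVT}, to the Dirichlet polynomial with coefficients $b_n = a_n n^{-1}\1_{n\notin\mathcal{S}}\1_{X\leq n\leq 3X}$ and length $N = 3X$. Since $|b_n|\ll X^{-1}$ on its support, this gives
$$\int_{-T}^T\abs{\sum_{\substack{X\leq n\leq 3X\\ n\notin\mathcal{S}}} a_n n^{-1-it}}^2dt \ll \frac{T}{X^2}\#\{X\leq n\leq 3X: n\notin\mathcal{S}\} + \frac{T}{X^2}\sum_{1\leq \ell\leq 3X/T}\#\{n\leq 3X: n,\,n+\ell\notin\mathcal{S}\}.$$
The diagonal term is handled directly by \cref{prop:sieve_error}, which gives $\frac{T}{X}\frac{\log P_1}{\log Q_1}$. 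If $T>3X$, the $\ell$-sum is empty and we are done.

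For the off-diagonal term we need a pair-correlation version of \cref{prop:sieve_error}. If $n\notin\mathcal{S}$ and $n+\ell\notin\mathcal{S}$, then there are indices $j,j'\leq J$ such that $n$ has no prime factor in $[P_j,Q_j]$ and $n+\ell$ has no prime factor in $[P_{j'},Q_{j'}]$. We take a union bound over the pairs $(j,j')$. For each pair we apply the fundamental lemma of the sieve (as in \cref{prop:sieve_error}) to the two linear forms $n$ and $n+\ell$, sifting $n$ by the primes in $[P_j,Q_j]$ and $n+\ell$ by the primes in $[P_{j'},Q_{j'}]$. Take sifting level $z\leq Q_J\leq \exp(\log X_0/\log\log X_0)$ and level of distribution $D = X^{1/3}$. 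Then $s=\log D/\log z\gg \log\log X$, so the fundamental lemma error factor $e^{-s}$ is negligible, and the remainder terms contribute $\ll D^{1+o(1)}\ll X^{1/2}$. The main term is
$$X\prod_{p\in[P_j,Q_j]}\Big(1-\tfrac1p\Big)\prod_{p\in[P_{j'},Q_{j'}]}\Big(1-\tfrac1p\Big)\cdot \prod_{\substack{p\mid \ell\\ p\geq P_1}}\Big(1+O\big(\tfrac1p\big)\big),$$
where the last factor appears only when $j=j'$, from the primes $p\mid\ell$ at which the local densities $1-2/p$ degenerate to $1-1/p$. The first two products are each $\ll \frac{1}{j^2}\frac{\log P_1}{\log Q_1}$ and $\ll\frac{1}{j'^2}\frac{\log P_1}{\log Q_1}$, exactly as in \cref{prop:sieve_error}.

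Summing over $j,j'$ with weights $j^{-2}j'^{-2}$ gives a convergent double sum. Averaging the $\ell$-dependent factor over $\ell\leq 3X/T$ gives $O(1)$, since $\sum_{\ell\leq L}\prod_{p\mid \ell}(1+C/p)\ll L$. Hence
$$\sum_{1\leq\ell\leq 3X/T}\#\{n\leq 3X: n,n+\ell\notin\mathcal{S}\}\ll \frac{X}{T}\Big(X\Big(\frac{\log P_1}{\log Q_1}\Big)^2 + X^{1/2}\Big).$$
Multiplying by $T/X^2$ yields $\big(\frac{\log P_1}{\log Q_1}\big)^2 + X^{-1/2}$. The term $X^{-1/2}$ is absorbed because $\log P_1/\log Q_1 \geq (\log X)^{-1}$ under the hypotheses $P_1=(\log Q_1)^{C_1}$ and $Q_1\leq \exp(\log X_0/\log\log X_0)$.

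The step I expect to be the main obstacle is the uniform two-dimensional sieve bound. The estimate must hold uniformly in the shift $\ell$, including when $\ell$ has many prime factors in the sifting ranges. The union over $(j,j')$ must be organized so that the saving is genuinely the square of $\frac{\log P_1}{\log Q_1}$, not one factor. This means sifting the two forms simultaneously rather than bounding one of them trivially. One must also check that the singular-series factor for $j=j'$ averages to $O(1)$ over $\ell$ and does not grow.
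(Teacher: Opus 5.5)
Your proposal is correct and follows the paper's proof essentially step for step. Both arguments apply the improved mean value theorem to the (majorised) indicator of $n\notin\mathcal{S}$, handle the diagonal via \cref{prop:sieve_error}, and handle the off-diagonal by a union bound over pairs $(j,j')$ with the fundamental lemma of the sieve. In both, the case $j=j'$ produces an $\ell$-dependent factor $\prod_{p\mid\ell}(1+O(1/p))$ that is bounded on average over $\ell$. Your explicit sieve level $D=X^{1/3}$, and the absorption of the resulting $X^{-1/2}$ remainder using $\log P_1/\log Q_1\gg(\log X)^{-1}$, is bookkeeping the paper leaves implicit rather than a different route.
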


The proof will go via the improved mean value theorem.

\begin{proof}
    Let
    $$\mathcal{A}_j = \left\{ n\in [X, 3X]: p|n \implies p\notin [P_j, Q_j]\right\}$$
    for $1\leq j\leq J$. Let 
    $$\mathcal{A} = \bigcup_{j\leq J} \mathcal{A}_j = [X, 3X] \setminus \mathcal{S}.$$
    We will apply \cref{lem:IMVT} to the Dirichlet polynomial with coefficients $\1_{\mathcal{A}}(n)$ (here we are using 1-boundedness of the $a_n$), so 
    \begin{equation}{\label{eq:apply_imvt}}
        \int_{-T}^T \abs{\sum_{\substack{X\leq n\leq 3X \\ n\notin \mathcal{S}}} a_n n^{-1-it}}^2 dt \ll \frac{T}{X^2}\sum_{\substack{X\leq n\leq 3X\\ n\in \mathcal{A}}} 1 + \frac{T}{X^2} \sum_{1\leq \ell\leq X/T} \sum_{X\leq n\leq 3X} \1_{\mathcal{A}}(n)\1_{\mathcal{A}}(n+\ell).
    \end{equation}
    The first term is admissible; we know by \cref{prop:sieve_error} that this is at most 
    $$\frac{T}{X} \frac{\log P_1}{\log Q_1}$$
    which contributes the first term to the claimed upper bound.
    The more involved computation concerns the second term. For a fixed $\ell$, note that the inner sum is exactly
    $$r(\ell) = \#\{n\in [X, 3X] : n, n+\ell \in \mathcal{A}\}.$$
    Because $\mathcal{A}$ is a union of $\mathcal{A}_j$, we can upper bound 
    $$\1_{\mathcal{A}}(n)\1_{\mathcal{A}}(n+\ell) \leq \sum_{i=1}^J\sum_{j=1}^J \1_{\mathcal{A}_i}(n) \1_{\mathcal{A}_j}(n+\ell),$$
    and so if 
    $$r_{i,j}(\ell) = \#\{n\in [X, 3X]: n\in\mathcal{A}_i, n+\ell \in \mathcal{A}_j\},$$
    we have the inequality
    $$r(\ell) \leq \sum_{i=1}^J \sum_{j=1}^J r_{i,j}(\ell).$$
    We consider the diagonal and off-diagonal contributions separately.
    Firstly suppose $i\neq j$, and recall that the prime ranges $[P_i, Q_i]$ and $[P_j, Q_j]$ are disjoint.
    For fixed $\ell$, we count $n$ such that 
    \begin{itemize}
        \item for all $p\in [P_i, Q_i]$, $p\nmid n$,
        \item for all $p\in [P_j, Q_j]$, $p\nmid n+\ell$.
    \end{itemize}
    This forbids one congruence class for each prime in this disjoint range, so by the fundamental lemma of the sieve, 
    \begin{align}{\label{eq:off_diag_imvt}}
        r_{i,j}(\ell) &\ll X \prod_{p\in [P_i, Q_i]} \left(1-\frac{1}{p}\right) \prod_{p\in [P_j, Q_j]} \left(1-\frac{1}{p}\right) \nonumber\\
        &\ll X\frac{\log P_i}{\log Q_i} \frac{\log P_j}{\log Q_j}. 
    \end{align}
    Now, suppose that $i=j$, in which case the conditions are no longer disjoint.
    For all $p\in [P_j, Q_j]$ we require that $n\not\equiv 0$, $n\not\equiv -\ell$ modulo $p$. If $p|\ell$, then this is still just one restricted congruence class, and if $p\nmid \ell$ this is two.
    Thus, 
    $$r_{j,j}(\ell) \ll X \prod_{\substack{p\in [P_j, Q_j]\\ p\nmid \ell}} \left(1-\frac{2}{p}\right) \prod_{\substack{p\in [P_j, Q_j] \\ p|\ell}}\left(1-\frac{1}{p}\right).$$
    We would like to end up with the squared density of $\mathcal{A}_j$, so we write the right-hand side as 
    \begin{equation}{\label{eq:diag_imvt}}
        X \prod_{\substack{p\in [P_j, Q_j]\\ p\nmid \ell}} \left(1-\frac{2}{p}\right) \prod_{\substack{p\in [P_j, Q_j] \\ p|\ell}}\left(1-\frac{1}{p}\right) \ll X \left(\frac{\log P_j}{ \log Q_j}\right)^2 \underbrace{\prod_{\substack{p\in [P_j, Q_j] \\ p|\ell}} \left(\frac{p-1}{p-2}\right)}_{w_j(\ell)}.
    \end{equation}
    The idea will be to show that on average over $\ell$, $w_j(\ell)$ is well-behaved.
    Before that, we will plug our bounds from \eqref{eq:off_diag_imvt} and \eqref{eq:diag_imvt} into the second term on the right-hand side of \eqref{eq:apply_imvt}:
    \begin{align*}
        \frac{T}{X^2} \sum_{1\leq \ell\leq X/T} \sum_{X\leq n\leq 3X} \1_{\mathcal{A}}(n)\1_{\mathcal{A}}(n+\ell) &= \frac{T}{X^2}\sum_{1\leq \ell\leq X/T} \left(\sum_{j=1}^J r_{j,j}(\ell) + \sum_{j=1}^J\sum_{\substack{i=1 \\ i\neq j}}^J r_{i,j}(\ell)\right)\\
        &\ll \left(\sum_{j=1}^J \left(\frac{\log P_j}{\log Q_j}\right)^2 \cdot \frac{T}{X}\sum_{1\leq \ell \leq X/T} w_j(\ell)\right) \\
        &\quad + \left(\sum_{i=1}^J\sum_{j=1}^J \frac{\log P_i}{\log Q_i}\frac{\log P_j}{\log Q_j}\right).
    \end{align*}
    If we can prove that $\frac{T}{X}\sum_{1\leq \ell\leq X/T}w_j(\ell)$ is bounded for each $j$, then we can factor this as a square and the result follows from \cref{prop:sieve_error}.
    We compute 
    \begin{align*}
        \frac{T}{X}\sum_{1\leq \ell \leq X/T} w_j(\ell) &= \frac{T}{X}\sum_{1\leq \ell \leq X/T} \prod_{\substack{p\in [P_j, Q_j] \\ p|\ell}}\left(1+\frac{1}{p-2}\right)\\
        &= \frac{T}{X}\sum_{1\leq \ell \leq X/T} \sum_{\substack{d|\ell\\ p|d \implies p\in [P_j, Q_j]}} \prod_{p|d} \frac{1}{p-2}\\
        &= \frac{T}{X}\sum_{1\leq \ell \leq h}\sum_{d|\ell}g_j(d),
    \end{align*}
    where $g_j(d)$ is the multiplicative function supported on squarefree $d$, defined by 
    $$g_j(d) = \begin{cases}
        \prod_{\substack{p|d}}\frac{1}{p-2}, &p|d\implies p\in [P_j, Q_j]\\
        0, &\text{otherwise}
    \end{cases}.$$
    Swapping sums, 
    \begin{align*}
        \frac{T}{X}\sum_{1\leq \ell \leq X/T}\sum_{d|\ell}g_j(d) &= \frac{T}{X} \sum_d g_j(d) \sum_{\substack{\ell \leq X/T \\ d | \ell}} 1 
        \leq \sum_{d} \frac{g_j(d)}{d} 
        = \prod_{p\in [P_j, Q_j]}\left(1+\frac{1}{p(p-2)}\right).
    \end{align*}
    This product is bounded uniformly and independently of $j$ because $\sum_p \frac{1}{p^2}$ converges.
    We conclude that the second term on the right-hand side of \eqref{eq:apply_imvt} is 
    \begin{equation}{\label{eq:sieve_bound}}
        \ll \left(\sum_{j=1}^J \frac{\log P_j}{\log Q_j}\right)^2.
    \end{equation}
    Now, just as in the proof of \cref{prop:sieve_error}, 
    by definition of $P_j, Q_j$, \eqref{eq:sieve_bound} is at most $\left(\frac{\log P_1}{\log Q_1}\right)^2$, which completes the proof.
\end{proof}

Later on, in the proof of \cref{thm:general_f} we will need to apply a Ramar\'e type decomposition (analogous to \cref{lem:last_range_sieve_error}) with a more restrictive choice of the parameter $Q$.  We state this result using a choice of parameters which will be explained further in \cref{sec:4.2}.
\begin{lemma}
{\label{lem:last_range_sieve_error_2}}
    Let $P$ be as in \cref{eq:PQ_choices} with $R=2$ and let $Q = \frac{X}{\exp\left((\log X)^{\beta}\right)}$. Then 
    \begin{equation}
        \#\{n \leq 3X: p|n \implies p\notin [P,Q]\} \ll X(\log X)^{-1+\beta}.
    \end{equation}
\end{lemma}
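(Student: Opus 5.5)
Split the integers $n\leq 3X$ with no prime factor in $[P,Q]$ into two classes.

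First, the $n$ all of whose prime factors are $<P$. These are $P$-smooth, and by \cref{lem:last_range_sieve_error} with $R=2$ there are $\ll X(\log X)^{-2+o(1)}$ of them. That is well within the target $X(\log X)^{-1+\beta}$.

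Second, the $n$ having some prime factor $p>Q$. Such $n$ can be written $n=pm$ with $m\leq 3X/p<3X/Q=3\exp((\log X)^\beta)$. Also $m$ has no prime factor in $[P,Q]$, but we will simply drop that condition. In fact $m\leq 3\exp((\log X)^{\beta})<P$ for large $X$, since $\beta<1$; so $m$ automatically has no prime factor in the range. Also $p>Q>\sqrt{3X}$ for large $X$, so there is at most one such prime and the representation is unique. The count is therefore at most
$$\sum_{m\leq 3X/Q}\pi(3X/m)\ll \sum_{m\leq 3X/Q}\frac{X}{m\log(3X/m)}.$$
Throughout this range $\log(3X/m)\geq \log Q\gg \log X$, so the sum is $\ll \frac{X}{\log X}\sum_{m\leq 3\exp((\log X)^\beta)}\frac1m\ll \frac{X}{\log X}(\log X)^\beta=X(\log X)^{-1+\beta}$. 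We only need the Chebyshev bound $\pi(y)\ll y/\log y$.

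Adding the two classes gives the claim. There is no real obstacle. The only points to check are that the smooth-number contribution $(\log X)^{-2+o(1)}$ is dominated by $(\log X)^{-1+\beta}$, which holds for any fixed $\beta\geq 0$, and that $\log(3X/m)\asymp\log X$ uniformly in $m$. Presumably $0<\beta<1$ is fixed, so $Q=X^{1-o(1)}$ and both checks hold. This also shows where the $(\log X)^{-1+\beta}$ comes from: it is the density of $n$ with one huge prime factor and small cofactor. That is exactly the large-prime obstruction mentioned in the outline, and it cannot be sieved away.
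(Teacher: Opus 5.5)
Your proposal is correct and follows the paper's proof: the same split into $P$-smooth integers (handled by \cref{lem:last_range_sieve_error} with $R=2$) and integers with a prime factor exceeding $Q$, which give the dominant $X(\log X)^{-1+\beta}$. The only difference is cosmetic. The paper bounds the second class by $X\sum_{Q<p\le 3X}1/p$ using Mertens' theorem and a Taylor expansion of $\log\log$, whereas you sum over the cofactor $m\le 3X/Q$ with Chebyshev's bound; this is the same double count taken in the other order, and it needs only Chebyshev rather than Mertens with its $O(1/\log x)$ error term.
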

\begin{proof}
Supposing that $n \leq 3X$ is not divisible by any prime in $[P,Q]$, either $n$ is $P$-smooth, or $n$ has some prime factor in the range $[Q, 3X]$. 
 Exactly as before, the number of $P$-smooth $n \leq 3X$ is $\ll \frac{X}{(\log X)^{2+o(1)}}$.
If $n$ is not $P$-smooth, then $n$ has a prime factor in the range $[Q, 3X]$, and we may estimate the number of such $n$ using Mertens' theorem:
$$X\sum_{Q < p < 3X}\frac{1}{p} \ll X \left(\log\log X - \log\log Q\right) \ll X\left(\log\log X - \log\left(\log X \left(\frac{(\log X)^{1-\beta}}{\log X}\right)\right)\right)$$
and via Taylor expansion, this is at most 
$$\ll \frac{X}{(\log X)^{1-\beta}}.$$
This is the dominant error, and so the claim follows.
\end{proof}

We will also give a variant of \cref{lem:add_back_sieve_error} for the last range, in essentially the same way. This will enable us to handle the sieve error on the frequency side in the proof of \cref{thm:general_f}.
\begin{lemma}{\label{lem:abse_2}}
    Let $X\geq h \geq 10$ and let $T = \frac{X}{h}$. Let $(a_n)$ be a 1-bounded sequence of complex numbers. Let $\frac{1}{2}\leq \beta \leq \frac{99}{100}$. Let $P$ be as in \cref{eq:PQ_choices} with $R\geq 2$ and let $Q = X\exp\left(-(\log X)^{\beta}\right)$. Then
    $$\int_{-T}^T \abs{\sum_{\substack{X\leq n\leq 3X\\ p|n\implies p\notin [P,Q]}} a_n n^{-1-it}}^2 dt \ll \frac{1}{(\log X)^{2(1-\beta)}} + \frac{1}{h^2}.$$
\end{lemma}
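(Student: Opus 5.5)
Let $\mathcal{B} = \{n\in [X,3X]: p\mid n \implies p\notin [P,Q]\}$. We follow the proof of \cref{lem:add_back_sieve_error} essentially verbatim. Apply \cref{lem:IMVT} to the Dirichlet polynomial with coefficients $a_n\1_{\mathcal{B}}(n)/n$. Using $1$-boundedness and $n\asymp X$, this gives
$$\int_{-T}^T \abs{\sum_{n\in\mathcal{B}} a_n n^{-1-it}}^2 dt \ll \frac{T}{X^2}\#\mathcal{B} + \frac{T}{X^2}\sum_{1\leq \ell\leq X/T}\#\{n\in[X,3X]: n, n+\ell\in\mathcal{B}\}.$$
By \cref{lem:last_range_sieve_error_2}, or rather its proof, which works for any $R\geq 2$, we have $\#\mathcal{B}\ll X(\log X)^{-(1-\beta)}$. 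Hence the diagonal term is $\ll \frac{T}{X}(\log X)^{-(1-\beta)} = \frac{1}{h}(\log X)^{-(1-\beta)}$. By AM--GM this is $\ll h^{-2} + (\log X)^{-2(1-\beta)}$, which is admissible.

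The main work is the off-diagonal count. We have $X/T = h$, so we must show that for each $1\leq \ell\leq h$,
$$\#\{n\in[X,3X]: n,n+\ell\in\mathcal{B}\} \ll X(\log X)^{-2(1-\beta)},$$
or at least that this holds on average over $\ell$. Split $\mathcal{B}=\mathcal{B}_1\cup\mathcal{B}_2$, where $\mathcal{B}_1$ consists of the $P$-smooth integers and $\mathcal{B}_2$ of those with a prime factor in $(Q,3X]$. Any pair involving $\mathcal{B}_1$ is bounded trivially by $\#\mathcal{B}_1\ll X(\log X)^{-R+o(1)}$ with $R\geq 2$. This is $\leq X(\log X)^{-2(1-\beta)}$ since $2(1-\beta)\leq 1<R$.

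For the pairs in $\mathcal{B}_2\times\mathcal{B}_2$, write $n=pm$ and $n+\ell = p'm'$ with $p,p'>Q$. Then $m,m'\leq 3X/Q = 3\exp((\log X)^\beta)=:M$. If $p=p'$, then $p\mid \ell$, which is impossible since $\ell\leq h\leq X<p$ (more precisely $\ell < Q$ in the relevant range; if $h\geq Q$ the term $h^{-2}$ already dominates trivially). So $p\neq p'$. Fix $m,m'\leq M$. We then count $p\in[X/m,3X/m]$ with $p$ prime and $(pm+\ell)/m'$ an integer prime. This is a linear-form prime-pair problem, and an upper-bound sieve (Selberg or the fundamental lemma, as in \cref{prop:sieve_error}) gives
$$\ll \frac{X}{mm'}\frac{1}{(\log X)^2}\prod_{p\mid \ell mm'}\left(1-\frac1p\right)^{-1}.$$
Summing over $m,m'\leq M$ gives $\ll X(\log M)^2(\log X)^{-2}$, up to the singular-series factor. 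Since $\log M \asymp (\log X)^\beta$, this is $X(\log X)^{-2(1-\beta)}$. The singular factor $\prod_{p\mid \ell}(1-1/p)^{-1}$ is handled exactly as $w_j(\ell)$ was in \cref{lem:add_back_sieve_error}, by expanding it as $\sum_{d\mid\ell} g(d)$ and showing it is bounded on average over $\ell\leq h$. The $m,m'$ parts contribute a factor absorbed by summing $\frac{1}{m}\prod_{p\mid m}(1-1/p)^{-1}$, which is still $\ll \log M$. Multiplying by $\frac{T}{X^2}\cdot h = \frac{1}{X}$ yields the bound $(\log X)^{-2(1-\beta)}$.

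The main obstacle is to apply the two-dimensional upper-bound sieve uniformly for $m,m'$ as large as $\exp((\log X)^\beta)$. The sieve level must be a small power of $X/(mm')$, which is still $X^{1-o(1)}$ since $\beta<1$. The divisibility constraint $m'\mid pm+\ell$ also has to be handled; it restricts $p$ to residue classes mod $m'/(m,m')$, and these must be tracked when we sum over $m,m'$. I expect the bookkeeping of these local factors, and of the condition $(m,\ell)$ appearing in the congruence, to be the only delicate point. A cruder fallback is to drop the primality of $p'$ and only sieve $p$; but that loses a factor of $\log X/\log M$ and gives only $(\log X)^{-(1-\beta)}$, so the two-prime sieve seems necessary.
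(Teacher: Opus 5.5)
Your proposal is correct and follows essentially the same route as the paper. Both arguments use the improved mean value theorem, bound the diagonal via \cref{lem:last_range_sieve_error_2} and AM--GM, and split the off-diagonal pairs into $P$-smooth integers (bounded crudely, since $R\geq 2>2(1-\beta)$) and integers $n=pm$ with $p>Q$, $m\leq M$; the latter are handled by a two-prime upper-bound sieve summed over $m_1,m_2\leq M$ to give $(\log M/\log X)^2$, with the singular factor averaged over $\ell$.

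Two small corrections. First, your aside on $p=p'$ is mis-argued: $p>Q$, not $p>X$, and $h^{-2}$ does not dominate anything when $h\geq Q$. The aside is unnecessary, though, because sieving $n/m_1$ and $(n+\ell)/m_2$ by primes up to $X^{1/100}$ (as the paper does) does not require the two primes to be distinct. Second, your displayed pointwise bound should carry the factor $g\,\1_{g\mid\ell}$ with $g=\gcd(m_1,m_2)$. The paper removes this factor exactly as you anticipate, by showing $\frac{T}{X}\sum_{\ell\leq X/T,\ g\mid\ell} w_{m_1,m_2}(\ell)\ll 1/g$.
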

The proof will mirror \cref{lem:add_back_sieve_error} and again follow from the improved mean value theorem.
\begin{proof}
    Let
    $$\mathcal{A} = \left\{n\in[X, 3X]: p|n \implies p\notin[P,Q]\right\},$$
    and define $M := \frac{3X}{Q}$, so that $\log M \asymp (\log X)^{\beta}$ and $M^2 = X^{o(1)}$. We apply \cref{lem:IMVT} to the sequence $\1_{\mathcal{A}}(n)$ to get
    \begin{equation}{\label{eq:imvt2}}
        \int_{-T}^T \abs{\sum_{X\leq n\leq 3X} \1_{\mathcal{A}}(n) n^{-1-it}}^2 dt \ll \frac{T}{X^2}\sum_{X\leq n\leq 3X} \1_{\mathcal{A}}(n) + \frac{T}{X^2}\sum_{1\leq \ell\leq \frac{X}{T}} \sum_{X\leq n\leq 3X}\1_{\mathcal{A}}(n)\1_{\mathcal{A}}(n+\ell).
    \end{equation}
    By definition of $T$ and by \cref{lem:last_range_sieve_error_2}, the diagonal term is at most
    $$\frac{1}{h} (\log X)^{-(1-\beta)} \ll \frac{1}{h^2} + (\log X)^{-2(1-\beta)},$$
    via the elementary inequality $ab \ll a^2 + b^2$. For fixed $\ell$, the inner sum in the second term of \eqref{eq:imvt2} is precisely
    $$r(\ell) = \#\left\{n\in [X, 3X]: n, n+\ell\in \mathcal{A}\right\}.$$
    If $n\in \mathcal{A}$, then either every prime dividing $n$ is at most $P$ (in which case $n$ is $P$-smooth) or $n$ is divisible by a prime $p > Q$. Define the sets
    $$\mathcal{A}_0 = \{n\in [X, 3X]: n\text{ is }P-\text{smooth}\},$$
    $$\mathcal{A}_1 = \{n\in [X, 3X]: n = pm,\ p > Q\},$$
    and observe that $\mathcal{A} \subseteq \mathcal{A}_0 \cup \mathcal{A}_1$, which implies right away that
    $$\1_{\mathcal{A}}(n)\1_{\mathcal{A}}(n+\ell) \leq \sum_{i=0}^1\sum_{j=0}^1 \1_{\mathcal{A}_i}(n)\1_{\mathcal{A}_j}(n+\ell).$$
    If we define
    $$r_{i,j}(\ell) = \#\{n\in [X, 3X]: n\in \mathcal{A}_i, n+\ell\in \mathcal{A}_j\}$$
    then we have $r(\ell) \leq \sum_{i=0}^1 \sum_{j=0}^1 r_{i,j}(\ell)$. Now, suppose that either $i=0$ or $j=0$. Just as in \cref{lem:last_range_sieve_error}, we have that 
    $$\#\mathcal{A}_0 \ll_R X (\log X)^{-R+o(1)}.$$
    For fixed $\ell$, given that either one of $n, n+\ell$ is $P$-smooth, we have that $r_{i,j}(\ell)\leq 2\#\mathcal{A}_0$ (using a crude bound) and so the total contribution to the second term of \eqref{eq:imvt2} is 
    $$\ll \frac{T}{X^2} \frac{X}{T} X (\log X)^{-R+o(1)} \ll (\log X)^{-2(1-\beta)},$$
    since $R \geq 2$. 
    
    Now, suppose that $i=j=1$. Note that if $n=pm$ with $p > Q$, because $Q > X^{\frac{1}{2}}$, it is unique, so we can decompose the integers by the residual part $m$. Write
    $$r_{1,1}(\ell) \leq \sum_{1\leq m_1, m_2\leq M} r_{m_1, m_2}(\ell),$$
    where
    $$r_{m_1, m_2}(\ell) = \#\left\{n\in [X, 3X]: n\equiv 0\pmod{m_1}, n\equiv -\ell\pmod{m_2},\ \frac{n}{m_1}, \frac{n+\ell}{m_2}\text{ are prime}\right\}.$$

    For fixed $m_1, m_2$ let $g = \gcd(m_1, m_2)$, and note that the first two congruence conditions require $g|\ell$, so we can restrict ourselves to looking at one congruence via the Chinese remainder theorem, modulo $\frac{m_1m_2}{g}$. Hence, there are at most $\frac{X g}{m_1m_2}+1$ choices for $n$. We may ignore the $+1$, as it contributes $\ll \frac{T}{X^2} \frac{X}{T} M^2 \ll X^{-1+o(1)}$, which is much smaller than the stated upper bound.
    
    Now, if $p\nmid m_1$ (resp. $p\nmid m_2$) and $p|\frac{n}{m_1}$ (resp. $p|\frac{n+\ell}{m_2}$) then we must have $p|n$ (resp. $p|n+\ell$). If $\frac{n}{m_1}$ and $\frac{n+\ell}{m_2}$ are prime, then neither has a prime factor $\leq X^{\frac{1}{100}}$, say. For all primes $p\leq X^{\frac{1}{100}}$ with $p\nmid {m_1m_2}$ we have that
    \begin{itemize}
        \item $n\not\equiv 0\pmod p$\\
        \item $n \not\equiv -\ell\pmod p$.
    \end{itemize}
    Because we only need an upper bound, we can drop the sieve conditions at $p=2$ and $p|m_1m_2$. Note that this is either one or two congruence-class restrictions, depending on whether or not $p| \ell$. By the fundamental lemma of the sieve,
    \begin{align*}
        r_{m_1, m_2}(\ell) &\ll \frac{X g}{m_1m_2} \prod_{\substack{2 < p \leq X^{1/100}\\ p\nmid \ell m_1m_2}} \left(1-\frac{2}{p}\right)\prod_{\substack{2 < p \leq X^{1/100}\\ p|\ell,\ p\nmid m_1m_2}} \left(1-\frac{1}{p}\right)\\
        &\ll \frac{Xg}{m_1m_2} \prod_{2 < p \leq X^{1/100}}\left(1-\frac{2}{p}\right) \underbrace{\prod_{\substack{2 < p \leq X^{1/100}\\ p|\ell,\ p\nmid m_1m_2}} \left(\frac{p-1}{p-2}\right)}_{w_{m_1, m_2}(\ell)} \prod_{\substack{2 < p\leq X^{1/100} \\ p|m_1m_2}}\left(1-\frac{2}{p}\right)^{-1}\\
        &\ll \frac{Xg}{m_1m_2} \cdot \frac{1}{(\log X)^{2}} \cdot w_{m_1, m_2}(\ell) u(m_1)^2 u(m_2)^2,
    \end{align*}
    with
    $$u(m) = \prod_{\substack{p > 2\\ p|m}}\frac{p-1}{p-2}.$$
    In the last line we used the fact that $\left(1-\frac{2}{p}\right)^{-1} = \left(\frac{p-1}{p-2}\right)\left(\frac{p}{p-1}\right)\leq \left(\frac{p-1}{p-2}\right)^2$, and the fact that the function $u$ is sub-multiplicative (which comes from a union bound). As before, we will prove that on average over $\ell$, the function $w_{m_1, m_2}(\ell)$ is well-behaved. In total, so far we have that
    \begin{align}{\label{eq:term2_final}}
        \frac{T}{X^2} \sum_{1\leq \ell\leq X/T} \sum_{X\leq n\leq 3X} &\1_{\mathcal{A}}(n)\1_{\mathcal{A}}(n+\ell) \ll \frac{1}{(\log X)^{2(1-\beta)}} + \frac{T}{X^2}\sum_{1\leq \ell\leq X/T} \sum_{m_1, m_2 \leq M} r_{m_1, m_2}(\ell) \nonumber\\
        &\ll \frac{1}{(\log X)^{2(1-\beta)}} + \sum_{m_1, m_2\leq M} \frac{u(m_1)^2u(m_2)^2}{m_1m_2} \frac{Tg}{X(\log X)^2}\sum_{\substack{1\leq \ell\leq X/T\\ g|\ell}}w_{m_1, m_2}(\ell).
    \end{align}

    We will prove that $$\frac{T}{X}\sum_{\substack{1\leq \ell\leq X/T\\ g|\ell}}w_{m_1, m_2}(\ell) \ll \frac{1}{g},$$
    after which we will only need to bound $\left(\sum_{m\leq M}\frac{u(m)^2}{m}\right)^2$. Write $\ell = g \ell'$ and note that because every divisor of $g$ divides $m_1m_2$, we have
    $$w_{m_1, m_2}(g\ell') \leq w(\ell') := \prod_{\substack{p > 2\\ p|\ell'}} \frac{p-1}{p-2},$$
    and so it suffices to bound $\frac{T}{X}\sum_{1\leq \ell' \leq X/Tg} w(\ell')$. Now using the Euler product structure we can write this as
    $$\frac{T}{X}\sum_{1\leq \ell' \leq X/Tg} w(\ell') = \frac{T}{X}\sum_{1\leq\ell' \leq X/Tg }\sum_{d|\ell'} g_w(d),$$
    where $g_w(d)$ is the multiplicative function given by 
    $$g_w(d) = \begin{cases}
        \prod_{p|d} \frac{1}{p-2}, &d\text{ odd and squarefree}\\
        0, &\text{ else}
    \end{cases}.$$
Just as before, by swapping the sums we have
\begin{equation}{\label{eq:w_on_avg}}
    \frac{T}{X}\sum_{1\leq \ell'\leq X/Tg} \sum_{d|\ell'}g_w(d) \leq \frac{1}{g} \sum_d \frac{g_w(d)}{d} \ll \frac{1}{g},
\end{equation}
where the convergence of the Euler product associated to $\frac{g_w(d)}{d}$ is a consequence of the fact that $\sum_{p} \frac{1}{p^2}$ converges absolutely. 

Lastly, we need to bound $\sum_{m\leq M} \frac{u(m)^2}{m}$. By the Euler product formula, we have
\begin{equation}{\label{eq:u_squared_EP}}
    \sum_{m \leq M} \frac{u(m)^{2}}{m} \leq \prod_{p \leq M}\left(1 + u(p)^{2}\sum_{k \geq 1} p^{-k}\right) = 2\prod_{2 < p \leq M}\left(1 + \frac{p-1}{(p-2)^{2}}\right) \ll \prod_{p \leq M}\left(1 + \frac{1}{p}\right) \ll \log M.
\end{equation}

So, plugging \eqref{eq:w_on_avg} and \eqref{eq:u_squared_EP} into \eqref{eq:term2_final} lets us conclude that
$$\frac{T}{X^2} \sum_{1\leq \ell\leq X/T} \sum_{X\leq n\leq 3X} \1_{\mathcal{A}}(n)\1_{\mathcal{A}}(n+\ell) \ll (\log X)^{-2(1-\beta)} + \left(\frac{\log M}{\log X}\right)^2 \ll (\log X)^{-2(1-\beta)},$$
by construction of $M$.
\end{proof}

Lastly, for $1\leq j\leq J$ define the following quantities:
$$\alpha_j  = \frac{\varepsilon}{2} - \eta\left(1+\frac{1}{2j}\right).$$
Here $\varepsilon$ is some number that we will fix later, but should be thought of as small. $\eta$ will also be fixed, and made small enough such that $\alpha_j$ is (strictly) positive for every $1\leq j\leq J$.
These are the sequence of power-savings for intermediate intervals from \cite{matomaki_radziwill}, although we have replaced $\frac{1}{2}$ by $\frac{\varepsilon}{2}$, as this will give us stronger control of quantities later on in the proof.

\subsection{Decompositions of Dirichlet polynomials}{\label{sec:2.3}}
Lastly, we need two decompositions of Dirichlet polynomials that will enable us to use standard $L^2$--$L^{\infty}$ techniques in our Dirichlet polynomial estimates. 

\begin{lemma}{\label{lem:ramare}}
    Let $X\geq 10$. Let $H\geq 1$ and let $Q \geq P \geq 1$.
    Let $a_n, b_m, c_p$ be bounded sequences such that $a_{mp} = b_m c_p$ whenever $p \nmid m$ and $p\in [P,Q]$.
    Further, let 
    $$Q_{v, H}(s) = \sum_{\substack{P\leq p \leq Q \\ e^{v/H}\leq p \leq e^{(v+1)/H}}} \frac{c_p}{p^s},$$
    $$R_{v,H}(s) = \sum_{Xe^{-v/H}\leq m\leq 2Xe^{-v/H}} \frac{b_m}{m^{s}}\frac{1}{\omega_{[P,Q]}(m)+1},$$
    and let $\mathcal{T}\subseteq [-T,T]$.
    Then 
    $$\int_{\mathcal{T}}\abs{\sum_{\substack{X\leq n\leq 3X \\ \exists p\in[P,Q]\text{ such that }p\mid n}} \frac{a_n}{n^{1+it}}}^2dt \ll H\log \left(\frac{Q}{P}\right) \sum_{v\in \mathcal{I}} \int_{\mathcal{T}} \abs{Q_{v,H}(1+it) R_{v,H}(1+it)}^2 dt + \frac{1}{H} + \frac{1}{P}.$$
    Here $\omega_{[P,Q]}(m)$ denotes the number of (distinct) primes $q\in [P,Q]$ dividing $m$, and $\mathcal{I}$ is the interval of integers in $[H\log P, H\log Q]$.
\end{lemma}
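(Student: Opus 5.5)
This is the standard Ramaré identity argument (Lemma 12 of Matomäki--Radziwiłł), and I would follow it closely.

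\textbf{Ramaré's identity.} For $n$ with $\omega_{[P,Q]}(n)\geq 1$ we have
$$\1 = \sum_{\substack{p\in[P,Q]\\ p\mid n}} \frac{1}{\omega_{[P,Q]}(n)}.$$
Write $n = pm$. If $p\nmid m$, then $\omega_{[P,Q]}(n) = \omega_{[P,Q]}(m)+1$ and $a_n = b_m c_p$. Hence
$$\sum_{\substack{X\leq n\leq 3X\\ \exists p\in[P,Q],\, p\mid n}} \frac{a_n}{n^{s}} = \sum_{P\leq p\leq Q}\ \sum_{X/p\leq m\leq 3X/p} \frac{c_p\, b_m}{(pm)^{s}\,(\omega_{[P,Q]}(m)+1)} \;-\; E(s),$$
where $E(s)$ collects the terms with $p\mid m$. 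Those terms involve $n$ with $p^2\mid n$ for some $p\geq P$. At $s=1+it$, I would bound $\int_{\mathcal T}|E|^2$ by the mean value theorem (\cref{lem:mvt}) with $T\ll X$, or simply pointwise. Either way the bound is $\ll \big(\sum_{p\geq P} 1/p^2\big)\cdot O(1)\ll 1/P$. The $1/P$ error should come from this step. If $|\mathcal T|$ is large, I would apply \cref{lem:mvt} to $E$ directly: its coefficients are supported on $n$ divisible by $p^2$, giving $\ll (T/X+1)X^{-1}\cdot X/P$, which is acceptable since $T\ll X$ in applications.

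\textbf{Splitting $p$ into short ranges.} Next I would split $p$ into the intervals $e^{v/H}\leq p\leq e^{(v+1)/H}$ for $v\in\mathcal I$. For such $p$, the constraint $X/p\leq m\leq 3X/p$ differs from $m\in[Xe^{-v/H}, 2Xe^{-v/H}]$-type ranges only near the endpoints. Since the ranges differ in shape, I would cover $[X/p, 3X/p]$ by $O(1)$ dyadic-like pieces $[Y e^{-v/H}, 2Ye^{-v/H}]$ with $Y\in\{X, 2X\}$. Alternatively, and this is what the statement suggests, I would accept a factorisation into $Q_{v,H}(s)R_{v,H}(s)$ plus a boundary error. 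That boundary error consists of pairs $(p,m)$ with $pm$ within a factor $e^{1/H}$ of $X$ or $3X$.

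The main obstacle is controlling this boundary error, which should produce the $1/H$ term. The $n$ in question lie in $[X, Xe^{1/H}]\cup[3Xe^{-1/H},3X]$, a set of size $\ll X/H$. Each such $n$ is counted with weight at most $\sum_{p\mid n} 1/(\omega(m)+1)\ll 1$. Applying \cref{lem:mvt} to this boundary polynomial, whose coefficients are $\ll 1/X$ and supported on $\ll X/H$ integers, gives $\ll (T+X)X^{-2}\cdot X/H\ll 1/H$ when $T\ll X$.

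\textbf{Cauchy--Schwarz over $v$.} For the main term, Cauchy--Schwarz over the $|\mathcal I|\ll H\log(Q/P)$ values of $v$ gives
$$\Big|\sum_{v\in\mathcal I} Q_{v,H}R_{v,H}\Big|^2\leq H\log(Q/P)\sum_{v}|Q_{v,H}R_{v,H}|^2.$$
Integrating over $\mathcal T$ then yields the stated main term. Combining this with the $1/P$ and $1/H$ errors via $|a+b+c|^2\ll|a|^2+|b|^2+|c|^2$ completes the proof.
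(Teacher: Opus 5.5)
Your argument is the Matom\"aki--Radziwi\l{}\l{} Lemma 12 argument, which is exactly what the paper cites for this lemma. The Ramar\'e step, the $p^2\mid n$ error, the boundary-layer estimate and the Cauchy--Schwarz over $v$ are all fine. You are also right that $T\ll X$ is needed: the error term in Matom\"aki--Radziwi\l{}\l{} is $(T/X+1)(1/H+1/P)$, and the statement here assumes $T\ll X$ without saying so.

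\textbf{The gap.} The step that fails as written is the matching of the $m$-ranges. As defined, $R_{v,H}$ only has $m\le 2Xe^{-v/H}$. So for $p\in[e^{v/H},e^{(v+1)/H}]$ the product $Q_{v,H}R_{v,H}$ only sees $pm\le 2Xe^{1/H}$. The discrepancy with $X/p\le m\le 3X/p$ is therefore not a boundary layer near the endpoints, as you claim. It consists of every $n\in(2Xe^{1/H},3X]$ with a prime factor in $[P,Q]$, a set whose size carries no factor $1/H$. Your first fix, covering by pieces with $Y\in\{X,2X\}$, does not work either. The second piece runs up to $4Xe^{-v/H}$, overshooting $3X/p$ by a full-size range. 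It also produces polynomials other than $R_{v,H}$.

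\textbf{The statement as typed is false.} Take $Q=\tfrac32P$, $c_p\equiv1$, and $b_m=\mathbf{1}[m>2X/P,\ \omega_{[P,Q]}(m)=0]$. Then $a_n$ is well defined: an $n$ with two exactly-dividing primes from $[P,Q]$ gets $0$ from either representation. Every $R_{v,H}$ with $v\in\mathcal{I}$ vanishes identically, since $2Xe^{-v/H}\le 2X/P$. On the other hand, for $X$ large in terms of $P$, the integers $n=pm\in(2X,3X]$ with $P\le p\le 1.4P$ give $|F(1+it)|\gg1/\log P$ for $|t|\le1$, because all the phases $(n/X)^{-it}$ lie in a fixed half-plane. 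With $\mathcal{T}=[-1,1]$ and $H=(\log P)^3$, the left side is $\gg(\log P)^{-2}$ while the right side is $\ll(\log P)^{-3}$.

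\textbf{What to do.} State explicitly that you are proving the corrected lemma, with $R_{v,H}$ summed over $Xe^{-v/H}\le m\le 3Xe^{-v/H}$; equivalently, use Matom\"aki--Radziwi\l{}\l{}'s normalization $X\le n\le 2X$ on the left. With that change, your boundary set is $n\in[X,Xe^{1/H})\cup(3X,3Xe^{1/H}]$, of size $\ll X/H+1$ with bounded coefficients, and your mean value bound goes through verbatim. One more small point: $\mathcal{I}$ as defined misses the primes in $[P,e^{\lceil H\log P\rceil/H})$. Their contribution is $\ll 1/H+1/P$ by the same mean value argument, so it is harmless but should be handled.

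The correction costs the paper nothing. Every later use of $R_{v,H}$, via the mean value theorem or Hal\'asz--Montgomery, is insensitive to whether its range is $[Y,2Y]$ or $[Y,3Y]$.
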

\begin{proof}
    See Lemma 12 of \cite{matomaki_radziwill}.
\end{proof}

We also give a variant of this proof where rather than split into short intervals to decouple the $m$ and $p$ variables, we use Perron's formula. This shifts the frequencies, but crucially we do not incur an error of the form $\frac{1}{H}$, and the polynomials produced are genuinely dyadic.

\begin{lemma}[Ramar\'e's identity, Perron variant]{\label{lem:perron_ramare}}
    Let $X > 10$ and define $P, Q$ as in \eqref{eq:PQ_choices}. 
    Let $a_n, b_m, c_p$ be bounded sequences such that $a_{mp} = b_m c_p$ whenever $p \nmid m$ and $p\in [P,Q]$. Let
    $$F(s) = \sum_{\substack{X\leq n\leq 3X \\ \exists p\in[P,Q]\text{ such that }p\mid n}} \frac{a_n}{n^{s}}.$$
    For each $v\in \mathcal{I} := [\log P, \log Q]$ define
    $$Q_v(s) = \sum_{\substack{P\leq p\leq Q \\ e^{v}\leq p\leq e^{(v+1)}}} c_p p^{-s},$$
    $$R_v(s) = \sum_{Xe^{-v}/4 \leq m\leq 4Xe^{-v}} \frac{b_m}{\omega_{[P,Q]}(m)+1} m^{-s}.$$
    Let $D = D(X)$ satisfy $0 < D < P$. For every $\mathcal{T} \subseteq [-T, T]$, if $T\leq X$ we have
    $$\int_{\mathcal{T}} \abs{F(1+it)}^2 dt \ll (\log D) (\log Q) \sum_{v\in \mathcal{I}} \int_{-D}^{D} \abs{\frac{3^{iu}-1}{u}} \int_{\mathcal{T}+u} \abs{Q_v(1+it)R_{v}(1+it)}^2 dt du + \frac{1}{D}.$$
\end{lemma}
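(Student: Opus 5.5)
We begin, as in the proof of Lemma 12 of \cite{matomaki_radziwill}, with Ramar\'e's identity. For $n\in[X,3X]$ having at least one prime factor in $[P,Q]$,
$$\frac{a_n}{n^{s}} = \sum_{\substack{n=pm\\ p\in[P,Q]}} \frac{a_n}{n^{s}}\frac{1}{\omega_{[P,Q]}(m)+1} + (\text{terms with } p\mid m).$$
For $p\nmid m$ we have $a_{pm}=b_mc_p$. The terms with $p^2\mid n$, $p\ge P$ contribute a Dirichlet polynomial whose coefficients are supported on $O(X/P)$ integers. By \cref{lem:mvt} with $T\le X$, its mean square on the 1-line is $\ll (T+X)X^{-2}\cdot X/P\ll 1/P$. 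Since $D<P$, this is $\le 1/D$.

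The main term is
$$F^*(1+it)=\sum_{P\le p\le Q}\ \sum_{m} \frac{c_p b_m}{(\omega_{[P,Q]}(m)+1)(pm)^{1+it}}\,\1_{X\le pm\le 3X}.$$
Next we split $p$ into the ranges $e^v\le p\le e^{v+1}$, $v\in\mathcal I$. There are $\ll\log Q$ such ranges, so Cauchy--Schwarz gives the factor $\log Q$. For $p$ in the $v$-th range the condition $X\le pm\le 3X$ forces $m\in[Xe^{-v}/e,3Xe^{-v}]\subseteq[Xe^{-v}/4,4Xe^{-v}]$, so the sum over $m$ is contained in the support of $R_v$.

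The remaining task is to remove the cross condition $\1_{X\le pm\le 3X}$. We use a truncated Perron formula with a smoothing built from $3^{iu}-1$. For $y=pm$ we have
$$\frac{1}{2\pi}\int_{-D}^{D}\frac{3^{iu}-1}{iu}\,(X/y)^{iu}\,du=\1_{X\le y\le 3X}+O\Bigl(\min\Bigl(1,\frac{1}{D\,|\log(y/X)|}\Bigr)+\min\Bigl(1,\frac{1}{D\,|\log(y/3X)|}\Bigr)\Bigr).$$
This is the Fourier transform of $\1_{[\log X,\log 3X]}$, truncated at $D$. Inserting it factorizes the $v$-th piece as
$$\frac{1}{2\pi}\int_{-D}^{D}\frac{3^{iu}-1}{iu}X^{iu}\,Q_v(1+it+iu)R_v(1+it+iu)\,du.$$
We then apply Cauchy--Schwarz in $u$ with weight $|(3^{iu}-1)/u|$. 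Its total mass is $\int_{-D}^{D}|(3^{iu}-1)/u|\,du\ll \log D$, since it is bounded for $|u|\le1$ and $\ll1/|u|$ beyond. This gives
$$\Bigl|\int\cdots\Bigr|^2\ll\log D\int_{-D}^{D}\Bigl|\frac{3^{iu}-1}{u}\Bigr|\,|Q_vR_v(1+i(t+u))|^2\,du.$$
Integrating over $t\in\mathcal T$ and substituting $t\mapsto t+u$ turns the domain into $\mathcal T+u$. Summing over $v$ with the earlier factor $\log Q$ yields the main term.

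\textbf{Main obstacle.} The hard part is controlling the Perron truncation error by $1/D$ in mean square. The error polynomial has coefficients of size $\ll|a_n|/n\cdot E(n)$, where
$$E(n)=\min\Bigl(1,\frac{1}{D|\log(n/X)|}\Bigr)+\min\Bigl(1,\frac{1}{D|\log(n/3X)|}\Bigr),$$
summed over the $\ll\tau(n)$ representations $n=pm$. Since $p\ge P$ is large, $\omega_{[P,Q]}$ is $O(\log X/\log P)$, so the multiplicity is harmless after dividing by $\omega+1$; in fact the Ramar\'e weights sum to at most $1$ per $n$.

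We then apply \cref{lem:mvt} with $T\le X$ (lengths up to $\asymp X$), which bounds the mean square by $\ll X\sum_n E(n)^2/n^2$. This sum is concentrated near $n\approx X$ and $n\approx 3X$. Those $n$ with $|\log(n/X)|\le 1/D$ number about $X/D$, while the tail contributes $\sum_k X/(D^2k^2\cdot\ldots)$. In total $\sum_n E(n)^2\ll X/D$, giving an error $\ll X\cdot X^{-2}\cdot X/D=1/D$, as required.

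The delicate point is that the sum defining $F^*$ extends slightly beyond $[X,3X]$ once $m$ ranges over the full support of $R_v$. We handle this by exact bookkeeping: the Perron kernel, rather than the support of $R_v$, enforces the range, and the discrepancy is absorbed into the same $E(n)$ error.
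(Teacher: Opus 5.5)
Your proposal is correct and follows the paper's proof essentially step for step. You discard the $p^2\mid n$ terms via the mean value theorem, split $p$ into $e$-adic ranges, and decouple $pm$ with the truncated Perron kernel $\frac{(3^{iu}-1)X^{iu}}{iu}$. You then apply Cauchy--Schwarz in $u$ (mass $\ll \log D$) and in $v$ (factor $\ll \log Q$), and bound the aggregated truncation error, including the $n=pm$ slightly outside $[X,3X]$ that you rightly account for, by $\frac{1}{X}\sum_n |\Delta_D(n)|^2 \ll \frac{1}{D}$.

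One point of ordering: apply the Cauchy--Schwarz over $v$ only after collecting the Perron errors into the single polynomial $\sum_v E_v$, as your error analysis in fact does and as the paper does. Applying it to each $B_v = M_v + E_v$ separately would naively cost an extra factor $\log Q$ on the $\frac{1}{D}$ term.
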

Note here that we have shifted each frequency set by $u$.

\begin{proof}
    Let $S = \left\{X \leq n\leq 3X: \exists p\in [P,Q] \text{ such that }p^2|n\right\}$, which has cardinality
    $$\ll \sum_{P\leq p\leq Q} \frac{X}{p^2} \ll \frac{X}{P}.$$

    If we define the error Dirichlet polynomial $E_1(s) = \sum_{n\in S} e_n n^{-s}$, for bounded coefficients $e_n$, via the mean value theorem we have
    $$\int_{-T}^T \abs{E_1(1+it)}^2 dt \ll \frac{1}{P}$$
    for $T\leq X$. Hence, we can restrict our attention to $n=pm$ with $p\nmid m$, and write
    $$F(s) = \sum_{P\leq p\leq Q} \sum_{X\leq pm \leq 3X}\frac{c_p b_m}{\omega_{[P,Q]}(m)+1} (pm)^{-s} + E_1(s).$$
    Now we split the prime variable into $e$-adic intervals:
    $$F(s) = \sum_{v\in \mathcal{I}} \underbrace{\sum_{\substack{P\leq p\leq Q \\ e^{v}\leq p\leq e^{(v+1)}}} \sum_{X\leq pm \leq 3X}\frac{c_p b_m}{\omega_{[P,Q]}(m)+1} (pm)^{-s}}_{B_v(s)} + E_1(s),$$
    and we now know both $p, m$ lie in dyadic ranges. The idea will be to apply the truncated Perron formula at height $D$ for each $v$ to the $pm$ variable to decouple $m, p$.
    We have 
    \begin{equation}{\label{eq:B_integral}}
        B_v(s) = \frac{1}{2\pi} \int_{-D}^D Q_v(s+iu) R_v(s+iu) \frac{X^{iu}(3^{iu}-1)}{iu} du + E_v(s),
    \end{equation}
    where $E_v(s)$ has coefficients weighted by the truncation error in Perron's formula. We define $E_2(s) = \sum_v E_v(s)$. Define the error at each $n$ by
    $$\Delta_D(n):= 1_{[X,3X]}(n) -\frac{1}{2\pi}\int_{-D}^{D} \left[\left(\frac{3X}{n}\right)^{iu} -\left(\frac{X}{n}\right)^{iu}\right] \frac{du}{iu}.$$
    
     We know by the truncated Perron formula that 
     \begin{equation}{\label{eq:perron_error}}
         \abs{\Delta_D(n)} \ll \min\left(\frac{1}{D\abs{\log(n/X)}}\right) + \min\left(1, \frac{1}{D\abs{\log(3X/n)}}\right).
     \end{equation}
     If we write $E_2(s) = \sum_{X\leq n\leq 3X}e_n n^{-s}$, we have
     $$\sum_{X\leq n\leq 3X} |e_n|^2 \ll \sum_{X\leq n\leq 3X} |\Delta_D(n)|^2 \ll \frac{X}{D},$$
    by separating the $O(1)$ points at which \eqref{eq:perron_error} takes the value 1, and using the Taylor expansion for $\frac{1}{\log(n/X)}$ (resp. $\frac{1}{(\log(3X/n))}$) on the remainder of summation range.
     Hence, by the mean value theorem (\cref{lem:mvt}), we have
    $$\int_{-T}^T \abs{E_2(1+it)}^2 dt \ll \frac{T+X}{X^2} \cdot \frac{X}{D} \ll \frac{1}{D}.$$
    Returning to \eqref{eq:B_integral}, we take absolute values and apply Cauchy--Schwarz to the $u$ integral to obtain
    \begin{equation}{\label{eq:Bv_integral}}
        \abs{B_v(1+it)}^2 \ll \underbrace{\left( \int_{-D}^D \abs{\frac{3^{iu}-1}{iu}} du \right)}_{\ll \log D} \int_{-D}^D \abs{\frac{3^{iu}-1}{iu}} \abs{Q_{v}(1+i(t+u)) R_{v}(1+i(t+u))}^2 du + \frac{1}{D}
    \end{equation}
    Observe that because $F = \sum_{v}B_v + E_1 + E_2$, by Cauchy--Schwarz,
    $$\abs{F(1+it)}^2 \ll \log(Q/P) \sum_v \abs{B_{v}(1+it)}^2 + \abs{E_1(1+it)}^2 + \abs{E_2(1+it)}^2$$
    
    Now, we interchange the integral over $t\in \mathcal{T}$ with the $v$ sum and the $u$ integral, and apply the mean value theorem for the error polynomials $E_1(s)$ and $E_2(s)$ to obtain the result. 
\end{proof}

Lastly, the use of Heath-Brown's identity in order to decompose long Dirichlet polynomials supported on primes is the main engine that enables us to choose the interval $[P,Q]$ in the way we have for the proof of \cref{thm:liouville_bound}.

\begin{lemma}{\label{lem:heath_brown}}
    Let $K \geq 1$ be an integer.
    Fix $B > 0$, let $q\leq (\log X)^B$, and let $\chi$ be a Dirichlet character mod $q$.
    Let $T \geq 2$, and assume that for some $\epsilon > 0$, we have $P \geq T^{\epsilon}$.
    Let $P\leq P' \leq 2P$, and lastly let $C > 0$.
    If 
    $$P(s) = \sum_{P\leq p\leq P'}\chi(p) p^{-s},$$
    then there exist constants $A, D > 0$, a Dirichlet polynomial $E(s)$, and products
    $$G_{\ell}(s) = \prod_{j=1}^{J_{\ell}} M_{\ell,j}(s)$$
    for $1\leq \ell \leq L$ such that the following all hold:
    \begin{enumerate}
        \item For all $|t|\leq T$, 
        $$\abs{P(1+it)} \ll \sum_{\ell=1}^L \abs{G_{\ell}(1+it)} + |E(1+it)|,$$
        \item $L \leq (\log P)^D$,
        \item Define $\Delta := (\log P)^{-A}$.
        Each $M_{\ell, j}(s)$ is supported on $[M_{\ell,j}, (1+\Delta)M_{\ell, j}]$ for some $M_{\ell, j}\geq 1$, and its coefficients are one of either
        $$\chi(n),\quad \chi(n)\log(n),\quad \chi(n)\mu(n),$$
        \item Each $J_{\ell} \leq 2K$,
        \item If $M_{{\ell},j} > P^{1/K}$, then its coefficients are either $\chi(n)$ or $\chi(n)\log(n)$. 
        \item Each polynomial satisfies $M_{{\ell},j} > \exp\left(\frac{\log P}{\log\log P}\right)$, and 
        $$P\exp\left(-2K\frac{\log P}{\log\log P}\right) \leq \prod_{j=1}^{J_{\ell}}M_{\ell,j}  < 2P,$$
        \item The error $E(s)$ is small in mean-square:
        $$\int_{-T}^T \abs{E(1+it)}^2 dt \ll \left(\frac{T}{P}+1\right)(\log P)^{-C}.$$
    \end{enumerate}
    Any implied constants are allowed to depend on $B, C, K, \epsilon$. 
\end{lemma}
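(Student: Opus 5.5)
We will follow the standard proof of the Heath-Brown decomposition, as in Matom\"aki--Radziwi\l{}\l{}'s treatment (Lemma 13 of \cite{matomaki_radziwill}), adapted to include the twist $\chi$. The first step is to pass from primes to the von Mangoldt function. By partial summation we replace $\sum_{P\le p\le P'}\chi(p)p^{-s}$ by $\sum_{P\le n\le P'}\chi(n)\Lambda(n)n^{-s}/\log n$. The prime-power contribution is a polynomial supported on $O(\sqrt P)$ integers, and by \cref{lem:mvt} its mean square on the $1$-line is $\ll (T+P)P^{-3/2}$, which goes into $E(s)$. We then split $[P,P']$ into $O(\Delta^{-1}\log P)$ intervals of multiplicative length $1+\Delta$, on each of which $1/\log n$ is constant up to $O(\Delta)$. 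This costs a factor $(\log P)^{A+1}$ in the count $L$ and produces an error polynomial with coefficients $O(\Delta)$, whose mean square is $\ll (T/P+1)\Delta^2$. Choosing $A$ large in terms of $C$ absorbs this into $E$.

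The second step applies Heath-Brown's identity of order $K$ to $\chi\Lambda$ on each short interval $n\asymp P$:
\[
\chi(n)\Lambda(n)=\sum_{j=1}^K(-1)^{j-1}\binom Kj\sum_{\substack{n=m_1\cdots m_jn_1\cdots n_j\\ m_i\le (2P)^{1/K}}}\chi(m_1)\mu(m_1)\cdots\chi(m_j)\mu(m_j)\,\chi(n_1)\log n_1\,\chi(n_2)\cdots\chi(n_j).
\]
This identity is legitimate because $\chi$ is completely multiplicative. Each variable is split into $(1+\Delta)$-adic ranges, giving $\ll (\Delta^{-1}\log P)^{2K}$ pieces, so $L\le(\log P)^D$. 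Only the $m_i$ carry $\mu$, and they are at most $P^{1/K}$, which gives item (5). We have $J_\ell\le 2K$, which is item (4).

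Separating the product requires handling the cross condition that $\prod m_i\prod n_i$ lies in the short interval. For this we keep only those tuples of boxes whose product box lies inside $[P,P']$ (or the current short interval). The tuples whose product box straddles an endpoint are collected into $E$. Their total support has size $\ll K\Delta P\,\tau_{2K}$-weighted, so \cref{lem:mvt} gives mean square $\ll (T/P+1)\Delta(\log P)^{O_K(1)}$. Taking $A$ large compared with the divisor-bound exponent and $C$ makes this acceptable. This yields item (1) with a bounded coefficient, by the triangle inequality.

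It remains to enforce item (6), the lower bound $M_{\ell,j}>\exp(\log P/\log\log P)$. Boxes with some variable below $W:=\exp(\log P/\log\log P)$ cannot simply be discarded. For the $\mu$-variables and the $1$-variables we instead group all factors below $W$ together; the product of the remaining large factors then lies in $[P W^{-2K},2P]$, which gives the stated lower bound on $\prod_j M_{\ell,j}$. The small variables would be bounded on the $1$-line by a sum $\sum_{m\le W}|\cdot|/m$, and this would cost a factor $\log W$ rather than being removed. I expect this to be the main obstacle: the statement lists only the large factors among the $M_{\ell,j}$. The approach I would try first is to throw the small-variable pieces into $E$ via a mean value bound. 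The number of $n\asymp P$ having a Heath-Brown factor below $W$ is not small, though, so this likely fails as stated. The fallback is to use the smoothness of $\chi\mu$ and $\chi$ over small ranges: write a small $\chi(m)$-variable polynomial as a short sum and absorb it by pigeonholing a fixed value of the small part. This multiplies $L$ by at most $W^{O(K)}$, which is too many, so pigeonholing in $L^\infty$ is needed instead. Since the lemma only requires an upper bound for $|P(1+it)|$ pointwise in $t$, the plan is: for each fixed $t$, bound the small factors trivially by $\ll(\log W)^{2K}\Delta^{-O(1)}$-many dyadic pieces, and absorb their values $\le 1$ into the implied constant. I expect this bookkeeping to be the delicate part of the argument.
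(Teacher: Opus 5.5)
Your architecture is the paper's: pass to $\chi\Lambda$, apply Heath-Brown's identity with $z=(2P)^{1/K}$, cut every variable into $(1+\Delta)$-adic ranges, send the boxes that straddle the cross condition into $E$ using \cref{lem:mvt} and the $\tau_{2K}$ bound, and bound the short factors trivially. However, one step fails as written, and it is the scale bookkeeping.

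You freeze $1/\log n$ on short intervals $I_r$ of multiplicative length $1+\Delta$, and then require the product box to lie inside $I_r$. But each of the $2k\ge 2$ variables also has width $1+\Delta$, so a product box has width $(1+\Delta)^{2k}$ and essentially never fits inside an $I_r$. Your straddling bound $\ll K\Delta P$ counts the boundary region of a single interval. Summed over the intervals it is comparable to all of $[P,P']$, so $E$ gets no saving at all.

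The two scales have to be decoupled. The simplest fix is to apply the identity once, with cross condition $n\in[P,P']$. Then, in each surviving box with lower corner $N=\prod_j M_j$, replace $1/\log n$ by $1/\log N$. Since $n\in[N,(1+\Delta)^{2K}N]$, the resulting error polynomial has coefficients $\ll_K \tau_{2K}(n)\Delta/\log P$ and mean square $\ll (T/P+1)\Delta^2(\log P)^{O_K(1)}$. Alternatively, take the $I_r$ of width $1+\Delta^{1/2}$, much coarser than the boxes; then the straddling support totals $\ll K\Delta^{1/2}P$.

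Item (6), which you flag as the main obstacle, is trivial, and your closing sentence is already the paper's whole argument. For a single $(1+\Delta)$-adic piece with $M<W$, uniformly in $t$,
\[|M(1+it)|\le \sum_{M\le m\le (1+\Delta)M}\frac{|a_m|}{m}\le (\Delta+1/M)\max_m|a_m|\ll 1.\]
For coefficients $\chi(n)\log n$, use $\Delta\log W\le 1$ and $(\log M)/M\ll 1$.

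So in each box you simply delete its at most $2K$ short factors, at a cost of $O_K(1)$. Boxes stay indexed separately, and their number, including every choice of small scales, is still $\le(\log P)^{2K(A+1)}$. Nothing needs to be grouped, pigeonholed, moved into $E$, or chosen depending on $t$. The $\log W$ loss you feared appears only if you merge all small scales into one polynomial $\sum_{m\le W}$, which you should not do. The lower bound on $\prod_j M_{\ell,j}$ then follows because the deleted factors have product $<W^{2K}$, as you noted.
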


A similar argument can be found in \cite{almost_almost} Lemma 10, although in our application, we have a worse lower bound on the frequencies we are integrating over.
Therefore, it's important that we specify the coefficients of the polynomials $M_i(s)$ as the notion of ``prime-factored'' appearing there does not immediately transfer here.
Eventually we will have to do casework on the polynomials appearing on the right-hand side of this bound.

\begin{proof}
    Again, let $\Delta = (\log P)^{-A}$ for $A$ to be determined later.
    By partial summation it will suffice to prove the analogous bound for $P(s) = \sum_{P\leq n \leq 2P}\chi(n) \Lambda(n) n^{-s}$.
    Noting that we can write 
    $$P(s) = \underbrace{\sum_{n\leq 2P} \chi(n) \Lambda(n) n^{-s}}_{S(2P, s)} - \sum_{n\leq P} \chi(n) \Lambda(n) n^{-s},$$
    we only need to bound these partial sums $S(x,s)$ with some uniformity in $x$.
    By Heath-Brown's identity
    with $z = (2P)^{1/K}$, we have for $n\leq 2P$,
    \begin{equation}{\label{eq:heath_brown}}
    \Lambda(n) = -\sum_{k=1}^K (-1)^k\binom{K}{k}\msum_{\substack{m_1\cdots m_k n_1\cdots n_{k} = n \\ m_1, \dots, m_k\leq z}}\mu(m_1)\cdots \mu(m_k) \log(n_k).
    \end{equation}
    By complete multiplicativity of $\chi(n) n^{-s}$ we have 
    $$S(x,s) = \sum_{k=1}^K c_k H_{k}(x,s),$$
    where $c_k$ are some absolute constants (bounded in $K$).
    Here, 
    $$H_{k}(x,s)  = \msum_{\substack{m_1\cdots m_kn_1\cdots n_k \leq x \\ m_1, \dots, m_k\leq z}} \left(\prod_{i=1}^k \frac{\chi(m_i)\mu(m_i)}{m_i^s}\right) \left(\prod_{j=1}^{k-1}\frac{\chi(n_j)}{n_j^s} \right)\frac{\chi(n_k)\log(n_k)}{n_k^s}.$$
    We partition the range of each variable into $(1+\Delta)$-adic intervals.
    Let $\mathbf{N} = (M_1, \dots, M_k, N_1, \dots, N_k)$ be a tuple of scales for each variable, and define $H_k(x, s, \mathbf{N})$ to be the sub-sum, where the variables $m_i$ are contained in $[M_i, (1+\Delta)M_i]$ and similarly for the $n_j$.
    We have 
    $$H_k(x, s) = \sum_{\mathbf{N}}H_{k}(x, s, \mathbf{N}) + E_k(s),$$
    where $E_k(s)$ is the error obtained by dropping the cross condition.
    In particular, $E_k(s)$ will have coefficients supported on $x \leq n \leq (1+C_k \Delta)x$, and this is where we overshoot the $m_1\dots n_k \leq x$ condition.
    Here $C_k$ is some constant depending on $k$.
    The larger we take $A$ in the definition of $\Delta$, the sparser this error polynomial becomes.
    By the mean-value theorem, observe
    \begin{align}{\label{eq:single_error}}
        \int_{-T}^T \abs{E_k(1+it)}^2 dt &\ll \left(\frac{T}{P}+1\right)\sum_{2P \leq n\leq 2P(1+C_k\Delta)} \frac{\tau_{2k}(n)^2 (\log n)^2}{n}\\
        &\ll \left(\frac{T}{P}+1\right) \Delta (\log P)^{O_k(1)}. \nonumber
    \end{align}

    Here the coefficients of $E_k(s)$ are bounded by the $2k$-fold divisor function as well as the $\log n$ coming from the last variable.
    By taking $A$ sufficiently large in the definition of $\Delta$, we can make this $\ll \left(\frac{T}{P}+1\right)(\log X)^{-C}$, where $C$ is the constant appearing in item (7).

    Now we can properly factor each $H_k(x, s, \mathbf{N})$, and bound 
    $$\abs{H_{k}(2P, s) - H_k(P,s)} \leq \sum_{\mathbf{N}} \prod_{v=1}^{2k} \abs{M_{k, \mathbf{N}, v}(1+it)} + \abs{E_k(1+it)}.$$
    Here, this subtraction has ensured that all the polynomials are supported in the appropriate intervals.
    Now we apply the triangle inequality and sum over $k$ to obtain 
    $$\abs{P(1+it)} \leq \sum_{k=1}^K |c_k| \sum_{\mathbf{N}} \prod_{v=1}^{2k}\abs{M_{k, \mathbf{N}, v}(1+it)} + \abs{E(1+it)},$$
    where $E(1+it)$ is just the sum of $E_k(1+it)$ weighted by the $|c_k|$.
    By Cauchy--Schwarz and because we are thinking of $K$ as fixed, we save an arbitrary power of $\log P$ in the mean-square of $E(1+it)$. 
    
    Now, each $M_i$ or $N_i$ is $(1+\Delta)$-adic, so the total number of $\mathbf{N}$ is at most $\ll (\log P)^{2k}\Delta^{-2k} \ll (\log P)^{2K(A+1)}$.
    For any polynomial $M_{k, \mathbf{N}, v}(1+it)$ with length shorter than $\exp\left(\frac{\log P}{\log\log P}\right)$, we bound these trivially by $\ll 1$. 
    
    Lastly, because we limited the range of the $m_i$ variables to $z$ for $z = P^{1/K}$, any polynomial with length $> P^{1/K}$ must have corresponded to either $\chi(n_j)$ or $\chi(n_k)\log(n_k)$. 
\end{proof}

\section{Improved bounds for the Liouville function}{\label{sec:3}}
To prove \cref{thm:liouville_bound}, we will prove the following more general proposition. 

\begin{proposition}{\label{prop:explicit_1_bound}}
    Let $X \geq h \geq 10$. Let $P_1 < Q_1$.
    Let $\mathcal{S} = \mathcal{S}_{P_1, Q_1, X_0, X}$ be as in \cref{def:restricted_factors}, and set $\varepsilon = \frac{\kappa}{2}$, and let $\eta = \frac{\varepsilon}{4}$.
    Fix $z \in \{-1, +1\}$. Let $f(n)$ be the 1-bounded multiplicative function\footnote{When $z=1$ this gives the constant function 1, and when $z=-1$ this gives $\lambda(n)$.} defined by $f(n) = z^{\Omega(n)}$.
    Fix $B > 0$ and let $q\leq (\log X)^B$, and take $\chi$ to be a Dirichlet character mod $q$. Fix $A > 0$.
    In the specification of the quantities $P, Q$ given in \eqref{eq:PQ_choices}, take the quantity $R$ to be sufficiently large in terms of $A$.
    Then
    $$\frac{1}{X}\int_X^{2X} \abs{\frac{1}{h}\sum_{\substack{x\leq m \leq x+h \\ m\in \mathcal{S}}} \chi(m)f(m) - \frac{(\log X)^{3R}}{X} \sum_{\substack{x\leq m\leq x+\frac{X}{(\log X)^{3R}} \\ m\in \mathcal{S}}} \chi(m)f(m)}^2 dx \ll_{A, B}  \frac{(\log Q_1)^{\frac{1}{3}}}{P_1^{\frac{\varepsilon}{3}-\eta}} + \frac{1}{(\log X)^A}.$$
    
\end{proposition}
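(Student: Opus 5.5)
We follow the Matom\"aki--Radziwi\l{}\l{} scheme. First we pass to the frequency side. Let $F(s)=\sum_{X\le n\le 3X,\, n\in\mathcal S}\chi(n)f(n)n^{-s}$. The Parseval-type lemma of \cite{matomaki_radziwill} (Lemma 14 there) bounds the left-hand side by
$$\int_{T_0}^{X/h}|F(1+it)|^2\,dt+\max_{T\ge X/h}\frac{X/h}{T}\int_T^{2T}|F(1+it)|^2\,dt ,$$
with $T_0=(\log X)^{3R}$. The comparison with the long average of length $X/(\log X)^{3R}$ is exactly what lets us discard the range $|t|\le T_0$. The tail range $T\ge X/h$ is handled by the mean value theorem (\cref{lem:mvt}) just as in \cite{matomaki_radziwill}. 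So it suffices to bound $\int_{T_0\le|t|\le X}|F(1+it)|^2dt$.

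Next we remove the last prime range. By \cref{lem:last_range_sieve_error} together with the mean value theorem, the integers with no prime factor in $[P,Q]$ contribute $\ll(\log X)^{-R+o(1)}\le(\log X)^{-A}$ once $R$ is large. For the first $J$ ranges $[P_j,Q_j]$, we split $[T_0,X]$ into sets $\mathcal T_j$ exactly as in \cite{matomaki_radziwill}. Let $Q_{v,H_j}$ be the prime polynomials at scale $[P_j,Q_j]$, with $H_j=j^2P_1^{1/6-\eta}$ (adjusted to $\varepsilon$). Then $\mathcal T_j$ is the set of $t$ where $|Q_{v,H_j}(1+it)|\le e^{-\alpha_j v/H_j}$ for all $v$, but this fails for every $i<j$. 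We also let $\mathcal U$ be the set of $t$ where it fails for every $j\le J$.

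On $\mathcal T_1$ we apply \cref{lem:ramare} and then the pointwise smallness together with \cref{lem:mvt}. This gives the term $(\log Q_1)^{1/3}P_1^{-\varepsilon/3+\eta}$. On $\mathcal T_j$ with $j\ge2$ we use the large values estimate (\cref{lem:large_values}) to bound the number of well-spaced points where $Q_{v,H_{j-1}}$ is large. We then raise the failing polynomial to a power $\ell$ and apply \cref{lem:hm}; the spacing constraints on $P_j,Q_j$ make the sum over $j$ converge. This step is routine, since the choice $\alpha_j=\varepsilon/2-\eta(1+1/2j)$ was set up for it.

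The main work is on $\mathcal U$. Here we use \cref{lem:ramare} (or \cref{lem:perron_ramare}) with the final range $[P,Q]$. Since $f(pm)=zf(m)$ for $p\nmid m$, this reduces us to bounding
$$(\log X)^{O(1)}\sum_v\int_{\mathcal U}|Q_v(1+it)R_v(1+it)|^2\,dt ,$$
where $Q_v$ is $z\sum_{p\sim e^v}\chi(p)p^{-1-it}$. We split by size of $e^v$. When $e^v\ge\exp((\log X)^{2/3+\epsilon})$, \cref{lem:pnt} gives $\sup_{T_0\le|t|\le X}|Q_v(1+it)|\ll(\log X)^{-A'}$, because $|t|\ge T_0$ makes the term $\log X/(1+|t|)$ negligible. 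The mean value theorem on $R_v$ then finishes this case, though it costs $X/(X e^{-v})=e^v$. To avoid that loss, we first bound $|\mathcal U|$: by \cref{lem:large_values} applied at $j=J$, $|\mathcal U|\ll X^{o(1)}$ well-spaced points, in fact $\ll\exp(O(\log X/\log\log X))$. On such a small discrete set we use \cref{lem:hm} or \cref{lem:exp_pair_hm} for $R_v$. When $P\le e^v<\exp((\log X)^{2/3+\epsilon})$, the prime polynomial is too short for the zero-free region. In this case we instead apply \cref{lem:heath_brown} to $R_v$, or to the long complementary prime sum, writing it as products of polynomials with coefficients $\chi$, $\chi\log$ and $\chi\mu$. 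For a long factor with coefficient $\chi(n)$ we use the pointwise bound of \cref{lem:exponent_pairs}; this saves $X^{\kappa\cdot\Theta(1)}$ since $|t|\le X$. Factors with coefficient $\chi\mu$ that are longer than $\exp((\log X)^{2/3+\epsilon})$ are handled by \cref{lem:pnt_mobius}. Once every factor is short, the product has a $\mathcal U$-restricted $L^2$ bound via \cref{lem:exp_pair_hm} with $|\mathcal U|\le X^{o(1)}$.

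The obstacle I expect is precisely this casework on the Heath-Brown pieces on $\mathcal U$. We must find, for every factorization pattern, one factor that admits a pointwise saving of $(\log X)^{-A}$, so that the remaining product fits the $X^{o(1)}$-point Hal\'asz--Montgomery bound. The exponent pair choice $\varepsilon=\kappa/2$ is what should make the balance work, but checking that it works is the delicate part.
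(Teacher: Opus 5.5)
Your treatment of the ranges $\mathcal{T}_j$ and of the $P$-smooth integers matches the paper. The argument fails in two places: the reduction step, and the casework on $\mathcal{U}$.

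\textbf{The reduction.} With $h_2=X/(\log X)^{3R}$, the low frequencies can only be discarded up to $T_0=(\log X)^{R}$, since the discarded part is $\ll T_0^2h_2/X$. Taking $T_0=(\log X)^{3R}$ makes this bound $(\log X)^{3R}$, which is useless. More seriously, you cannot reduce to the unweighted integral $\int_{T_0\le|t|\le X}|F(1+it)|^2dt$. That integral is not small in general, since frequencies of size $X$ correspond to intervals of bounded length. In your own $\mathcal{T}_1$ step, the mean value theorem on $R_{1,v,H_1}$ costs a factor $\frac{T}{X/Q_1}+1$, which is a factor $Q_1$ at $T=X$. The mean value theorem only disposes of $T>X/8$. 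For $X/h<T\le X/8$ you must keep the weight $\frac{X/h}{T}$ from \cref{prop:parseval_1} and run the whole argument on each dyadic block. That weight turns $\frac{T}{X/Q_1}$ into $\frac{Q_1}{h}$, which is bounded when $Q_1\le h$.

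\textbf{The casework on $\mathcal{U}$ (the main gap).} Your case split is in the wrong place.
\begin{itemize}
\item Since $P=\exp\bigl(\frac{1}{R}\frac{\log X\,\log\log\log X}{\log\log X}\bigr)$ is far larger than $\exp((\log X)^{2/3+\epsilon})$, your second case is empty, and \cref{lem:pnt} applies to every $Q_v$. The real obstruction is the length of $R_v$, not the zero-free region.
\item The set $\mathcal{U}$ is not of size $X^{o(1)}$. The threshold $e^{-\alpha_J v/H_J}$ has $\alpha_J\approx\varepsilon/2$ fixed, so \cref{lem:large_values} gives a well-spaced set $\mathcal{E}$ with $|\mathcal{E}|\ll T^{2\alpha_J+o(1)}\ll X^{\varepsilon}$, a genuine power.
\item Hence when $e^v>X^{2/3}$, say, $R_v$ is too short for \cref{lem:hm} or \cref{lem:exp_pair_hm}. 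The mean value theorem then loses a power of $X$, which a $(\log X)^{-A'}$ saving from \cref{lem:pnt} cannot repay.
\end{itemize}

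The paper applies \cref{lem:heath_brown} with $K=3$ to the long prime polynomial $Q_{v,H}$ itself, whenever its length exceeds $X^{4/10}$. It does not apply it to $R_v$, which is not prime-supported. It then splits into two cases.
\begin{itemize}
\item[(a)] Some factor has length in $[Q_{v,H}^{1/8},Q_{v,H}^{1/3}]$. That factor is bounded pointwise by \cref{lem:pnt_mobius} or \cref{lem:exponent_pairs}. The remaining product times $R_{v,H}$ then has length $\ge X^{2/3}$, so \cref{lem:hm} applies on $\mathcal{E}$.
\item[(b)] Otherwise some factor has length $>Q_{v,H}^{1/3}>X^{2/15}$. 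By item (5) it has coefficients $\chi$ or $\chi\log$, and \cref{lem:exponent_pairs} gives $|M_1(1+it)|\ll X^{-\kappa+o(1)}+(1+|t|)^{-1}$.
\end{itemize}

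In case (b) the rest of the product may have length only $X^{o(1)}$. So your plan of finding one factor with a $(\log X)^{-A}$ saving and using Hal\'asz--Montgomery for the rest fails there. Instead, bound every other factor, including $R_{v,H}$, trivially and sum over $\mathcal{E}$. This gives $|\mathcal{E}|X^{-2\kappa+o(1)}+\sum_{t\in\mathcal{E}}(1+|t|)^{-2}\ll X^{\varepsilon-2\kappa+o(1)}+T_0^{-1}$. Trading this power saving against the size of $\mathcal{E}$ is exactly why $\varepsilon=\kappa/2$ is chosen, and your proposal leaves this step undone.
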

Stating the bound in terms of this $z$ will enable us to handle the distribution of $m\in\mathcal{S}$ in short intervals later.

In the proof of \cref{prop:explicit_1_bound}, we first need a ``Lipschitz'' estimate, in which rather than use the Lipschitz estimate in Lemma 4 of \cite{matomaki_radziwill}, we use a simple bound coming from the density of $n\in \mathcal{S}$.
We additionally need to compute the long-interval averages of the relevant sums. 
\begin{lemma}{\label{long_averages}}
    Fix $z \in \{-1, +1\}$. Let $X \geq 10$ and fix $B > 0$.
    Then for $q \leq (\log X)^B$, and $\chi$ a Dirichlet character modulo $q$, the following holds:
    \begin{enumerate}
        \item If $z=1$ and $\chi$ is principal mod $q$, then for $\alpha > 0$ arbitrarily small,
        $$\sum_{n\leq X}\chi(n) z^{\Omega(n)} = \frac{\phi(q)}{q} X + O_{\alpha, B}((\log X)^{\alpha}).$$
        \item If $z=-1$ and $\chi$ is principal mod $q$, then for $C > 0$
        $$\sum_{n\leq X} \chi(n) z^{\Omega(n)} \ll_{B, C} \frac{X}{(\log X)^C}.$$
        \item If $\chi$ is nonprincipal mod $q$, then for $C > 0$, 
        $$\sum_{n\leq X} \chi(n) z^{\Omega(n)} \ll_{B, C} \frac{X}{(\log X)^C}.$$
    \end{enumerate}
\end{lemma}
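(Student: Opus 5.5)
For part (1), I would argue directly by M\"obius inversion. For parts (2) and (3) I would reduce to sums of $\chi\mu$ and apply \cref{lem:pnt_mobius}.

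\textbf{Part (1).} Here $\chi(n)z^{\Omega(n)}=\1_{(n,q)=1}$. I would write
$$\sum_{n\le X}\1_{(n,q)=1}=\sum_{d\mid q}\mu(d)\left\lfloor \frac{X}{d}\right\rfloor = X\sum_{d\mid q}\frac{\mu(d)}{d}+O(\tau(q)).$$
The first term on the right is $\frac{\phi(q)}{q}X$. For the error, the divisor bound $\tau(q)\le \exp\big(O(\log q/\log\log q)\big)$ together with $q\le(\log X)^B$ gives $\tau(q)\ll_{\alpha,B}(\log X)^{\alpha}$ for every fixed $\alpha>0$. (If $q$ is very small the bound is trivial.)

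\textbf{Part (3) with $z=1$.} The sum is $\sum_{n\le X}\chi(n)$ with $\chi$ nonprincipal. By periodicity, $\chi$ sums to zero over every complete residue system, so the sum is $\le q\le(\log X)^B$, which is far smaller than $X(\log X)^{-C}$.

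\textbf{Parts (2) and (3) with $z=-1$.} I would use the identity $\lambda(n)=\sum_{d^2\mid n}\mu(n/d^2)$ and complete multiplicativity of $\chi$ to write
$$\sum_{n\le X}\chi(n)\lambda(n)=\sum_{d\le \sqrt X}\chi(d)^2\sum_{m\le X/d^2}\chi(m)\mu(m).$$
I would split the $d$-sum at $X^{1/4}$.

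\emph{Large $d$.} The range $d>X^{1/4}$ is bounded trivially by $\sum_{d>X^{1/4}}X/d^2\ll X^{3/4}$.

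\emph{Small $d$.} For $d\le X^{1/4}$ the inner length is $y=X/d^2\ge X^{1/2}$, so $\log y\asymp\log X$ and $q\le(\log X)^B\le(\log y)^{2B}$. I then need the intermediate bound
$$\sum_{m\le y}\chi(m)\mu(m)\ll_{B,C'} y(\log y)^{-C'}.$$
To get it:
\begin{itemize}
\item I split $[1,y]$ into dyadic blocks $[Y,2Y]$, bounding the blocks with $Y\le y(\log y)^{-C'}$ trivially.
\item On each remaining block, \cref{lem:pnt_mobius} at $t=0$ (with $2B$ in place of $B$) gives $\sum_{Y\le m\le 2Y}\chi(m)\mu(m)m^{-1}\ll(\log Y)^{-A}$. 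The same argument with the endpoint $2Y$ replaced by any $Y'\in[Y,2Y]$ controls the partial sums.
\item Partial summation against the weight $m$ converts this into $\sum_{Y\le m\le 2Y}\chi(m)\mu(m)\ll Y(\log Y)^{-A}$.
\item Summing over the $O(\log y)$ dyadic blocks and taking $A=C'+1$ gives the intermediate bound.
\end{itemize}

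\emph{Conclusion.} Substituting back, the contribution of $d\le X^{1/4}$ is $\ll X(\log X)^{-C'}\sum_d d^{-2}\ll X(\log X)^{-C}$. This covers both principal $\chi$ (part (2)) and nonprincipal $\chi$ (part (3)), since \cref{lem:pnt_mobius} allows any $\chi$ mod $q$.

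The only real difficulty is the uniformity in $q$ and the possible exceptional zero. Both are already absorbed into \cref{lem:pnt_mobius} via Siegel's theorem, which is why the implied constants are ineffective. The one thing to check is that its hypothesis $q\le(\log Y)^{B'}$ survives the rescaling from $X$ to $Y\ge X^{1/2}(\log X)^{-C'}$, which it does with $B'=2B+1$.
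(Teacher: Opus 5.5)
Your proposal is correct. Parts (1) and (3) with $z=1$ match the paper's argument exactly: the divisor bound for $\tau(q)$, and periodicity of $\chi$.

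For $z=-1$ you take a somewhat different route. The paper reruns the Perron/contour argument of \cref{lem:pnt_mobius} directly for the Dirichlet series $L(2s,\chi^2)/L(s,\chi)$, noting only that the numerator is uniformly bounded in the zero-free region. You instead use the coefficient-side form of that factorization, $\lambda(n)=\sum_{d^2\mid n}\mu(n/d^2)$, to reduce to sums of $\chi\mu$, and then invoke \cref{lem:pnt_mobius} as a black box.

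Your route is more modular. Nothing about zero-free regions, possible exceptional zeros, or the size of $L(2s,\chi^2)$ has to be rechecked, and the ineffectivity is inherited automatically. The cost is bookkeeping that the paper's one-line appeal avoids:
\begin{itemize}
\item the split at $d=X^{1/4}$;
\item checking $q\le(\log Y)^{2B+1}$ on the rescaled blocks;
\item converting the $1/m$-weighted block bounds back to unweighted sums.
\end{itemize}

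That last conversion needs the weighted sums with arbitrary right endpoints $Y'\in[Y,2Y]$, which is slightly more than \cref{lem:pnt_mobius} states. As you note, this comes from its proof. In fact, the Perron step there already bounds the unweighted sum $\sum_{X\le n\le Y'}\chi(n)\mu(n)$ by $X(\log X)^{-A}$ for any right endpoint $Y'\in[X,2X]$. You could quote that directly and skip the partial summation altogether.
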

When we apply this, we think of $q$ as something fixed, hence these main terms depending on $q$ will end up canceling in the Lipschitz estimate.
What is important here is that the estimates are uniform in $q$, and they do not depend on $X$. 
\begin{proof}
    Item (1) is immediate; the error term is obtained by the divisor bound on $\tau(q)$.
    
    For item (2), observe that $z^{\Omega(n)}$ is the Liouville function, and so essentially the same proof as \cref{lem:pnt_mobius} applies here as well. The only difference is that the associated $L$-function is $\frac{L(2s, \chi^2)}{L(s,\chi)}$, and the numerator is bounded uniformly by a constant in the Vinogradov--Korobov zero-free region. 

    Lastly, for item (3), we need to consider the case in which $\chi$ is a nonprincipal character. If $z=1$, then by periodicity $\sum_{n\leq X}\chi(n) \ll q \ll (\log X)^B$, which certainly satisfies the stated bound. If $z=-1$, the same comment as for item (2) applies, and the proof is completed by the same contour integration argument as in \cref{lem:pnt_mobius}.

\end{proof}
\begin{lemma}[Lipschitz estimate]{\label{lem:lipschitz}}
    Fix $R, B > 0$. Let $\mathcal{S}$ be as in \cref{prop:sieve_error}, and suppose that $Y = \frac{X}{(\log X)^{3R}}< X$.
    Fix $z\in \{-1, +1\}$ and let $f(n) = z^{\Omega(n)}$.
    Let $q\leq (\log X)^B$, with $\chi$ a Dirichlet character modulo $q$. Let $ C > 0$. 
    Then for any $x\in [X, 2X]$
    $$\abs{ \frac{1}{Y}\sum_{\substack{x\leq n \leq x+Y \\ n\in \mathcal{S}}} \chi(n)f(n) - \frac{1}{X}\sum_{\substack{X \leq n\leq 2X \\ n\in \mathcal{S}}} \chi(n)f(n)} \ll_{z,B} \frac{\log P_1}{\log Q_1} + (\log X)^{-C}.$$
\end{lemma}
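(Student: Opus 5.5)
The plan is to remove the restriction $n\in\mathcal{S}$ at a cost controlled by \cref{prop:sieve_error}, and then compare the two unrestricted averages using \cref{long_averages}. No genuine Lipschitz argument is needed: the unrestricted short and long averages are both essentially explicit, and the restriction costs at most the density of the complement of $\mathcal{S}$ in each window.

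\textbf{Step 1: dropping the restriction.} By the triangle inequality and $|\chi f|\le 1$, the short average differs from its unrestricted version by at most
$$\frac{1}{Y}\#\{x\le n\le x+Y: n\notin\mathcal{S}\}.$$
The long average differs from its unrestricted version by at most $\frac{1}{X}\#\{n\le 3X: n\notin\mathcal{S}\}$, which is $\ll \frac{\log P_1}{\log Q_1}$ by \cref{prop:sieve_error}.

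For the short window, I would redo the proof of \cref{prop:sieve_error} on the interval $[x,x+Y]$ rather than on $[1,3X]$. The fundamental lemma of the sieve applies on any interval of length $Y$ provided the sifting range satisfies $Q_j\le Y^{o(1)}$. This holds because $Q_J\le \exp(\log X_0/\log\log X_0)$ and $Y=X(\log X)^{-3R}$, so $\log Y\asymp\log X$. Each $j$ then contributes $\ll Y\frac{\log P_j}{\log Q_j}+Y^{1-c}$, where the second term is the sieve remainder. Summing over $j$ gives $\ll Y\frac{\log P_1}{\log Q_1}+JY^{1-c}$, and the remainder term is negligible against $(\log X)^{-C}$.

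This step is the main point needing care: the fundamental lemma has to be applied uniformly in short intervals with level $D=Y^{1/2}$, say. Since the sifting range is $X^{o(1)}$, the lemma's parameter $s=\log D/\log Q_j\to\infty$, and the error $e^{-s}$ is acceptable.

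\textbf{Step 2: comparing the unrestricted averages.} It remains to show
$$\frac{1}{Y}\sum_{x\le n\le x+Y}\chi(n)f(n)-\frac{1}{X}\sum_{X\le n\le 2X}\chi(n)f(n)\ll(\log X)^{-C}.$$
Write each sum as a difference of partial sums $\sum_{n\le u}$ and apply \cref{long_averages} at $u\in\{x,x+Y,X,2X\}$, each of which is $\asymp X$ with $q\le(\log X)^B\ll(\log u)^{B}$.

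In case (1), $z=1$ with $\chi$ principal, both averages equal $\frac{\phi(q)}{q}+O(Y^{-1}(\log X)^{\alpha})$, so the main terms cancel and the error is tiny. In cases (2) and (3), each partial sum is $\ll X(\log X)^{-C'}$. Dividing by $Y=X(\log X)^{-3R}$ costs $(\log X)^{3R}$, so it suffices to take $C'=C+3R$, which is allowed since the implied constants in \cref{long_averages} are permitted to depend on $C'$ and hence on $R$.

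Combining Steps 1 and 2 with the triangle inequality gives the claimed bound.
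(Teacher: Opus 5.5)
Your proposal is correct and matches the paper's proof: drop the restriction $n\in\mathcal{S}$ using \cref{prop:sieve_error} and its short-interval analogue via the fundamental lemma. Then compare the unrestricted averages with \cref{long_averages}, using cancelling main terms in case (1) and choosing $C'>C+3R$ in cases (2) and (3) to absorb the factor $X/Y=(\log X)^{3R}$. Your check that $\log D/\log Q_j\to\infty$ for $D=Y^{1/2}$ supplies the detail that the paper only asserts in a footnote.
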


\begin{proof}
    Note crucially that $Y = X^{1-o(1)}$.
    We split into the same cases based on $z, \chi$ as in \cref{long_averages}.
    Firstly suppose that we are in either case (2) or case (3).
    We decompose
    $$\abs{ \frac{1}{Y}\sum_{\substack{x\leq n \leq x+Y \\ n\in \mathcal{S}}} \chi(n)f(n) - \frac{1}{X}\sum_{\substack{X \leq n\leq 2X \\ n\in \mathcal{S}}} \chi(n)f(n)}$$
    into
    \begin{align}{\label{eq:big_triangle_ineq}}
         \bigg| \frac{1}{Y} \sum_{x\leq n\leq x+Y} 
         \chi(n)f(n) &- \frac{1}{Y}\sum_{\substack{x\leq n\leq x+Y\\ n\notin \mathcal{S}}} \chi(n)f(n) - \frac{1}{X} \sum_{X\leq n\leq 2X}\chi(n)f(n) + \frac{1}{X}\sum_{\substack{X \leq n\leq 2X \\ n\notin \mathcal{S}}} \chi(n)f(n)\bigg|\\
        &\ll (\log X)^{-C} + \frac{X}{Y}(\log X)^{-C'} + \frac{\log P_1}{\log Q_1}  \nonumber\\
        &\ll \frac{\log P_1}{\log Q_1} + (\log X)^{-C}\nonumber
    \end{align}
    where we have used \cref{prop:sieve_error} and \cref{long_averages} for the long averages\footnote{For the moving interval $[x, x+Y]$ one can prove an analog of \cref{prop:sieve_error} again using the fundamental lemma of the sieve, precisely because $Y$ is not much smaller than $X$.}. Observe that because $\frac{X}{Y} = (\log X)^{3R}$ for fixed $R$ and $C'$ can be taken arbitrarily large, the third line follows from the second by choosing $C' > C+3R$. 

    Lastly, suppose we are in case (1).
    We again decompose the left-hand side as in \eqref{eq:big_triangle_ineq}, and via the triangle inequality, group the terms as the difference of two long averages plus the sieve error terms.
    By \cref{long_averages} and subtracting endpoints, it is clear that
    \begin{align*}
        \abs{\frac{1}{Y} \sum_{x \leq n \leq x+Y}\chi(n) f(n) - \frac{1}{X}\sum_{X \leq n\leq 2X} \chi(n)f(n)} \ll_{\alpha} \frac{(\log X)^{\alpha}}{Y}
    \end{align*}
    which is substantially smaller than both the sieve error of $\frac{\log P_1}{\log Q_1}$ and $(\log X)^{-C}$. 
\end{proof}

With this bound, we now show how \cref{prop:explicit_1_bound} can be used to deduce \cref{thm:liouville_bound}.

\begin{proof}[Proof of \cref{prop:explicit_1_bound} $\implies$ \cref{thm:liouville_bound}.]
Let $z=-1$, so $f=\lambda$. 

From here, we apply the same strategy that can be found in both \cite{matomaki_radziwill} and \cite{average_chowla} in order to transfer the result to $\chi\lambda$ \emph{without} the restriction that $n\in \mathcal{S}$.
Observe that by the triangle inequality and $|\chi(n)\lambda(n)|\leq 1$, we have
\begin{equation}{\label{eq:liouville_triangle_eq}}
    \frac{1}{X}\int_X^{2X} \abs{\frac{1}{h} \sum_{x\leq n\leq x+h} \chi(n)\lambda(n)}^2 dx \ll \frac{1}{X} \int_X^{2X}\abs{\frac{1}{h} \sum_{\substack{x\leq n\leq x+h \\ n\in\mathcal{S}}} \chi(n)\lambda(n)}^2 dx+ \frac{1}{X} \int_X^{2X}\abs{\frac{1}{h} \sum_{\substack{x\leq n\leq x+h \\ n\notin\mathcal{S}}} 1}^2 dx. 
\end{equation}
We firstly apply \cref{lem:lipschitz} to compare the long moving average and the fixed long average. Note that the long interval average is negligible, as $\lambda$ has mean value decaying to zero with error term better than the Lipschitz estimate.
We have the bound 
$$\frac{1}{X}\sum_{X\leq n\leq 2X} \chi(n)\lambda(n) \ll \exp(-(\log X)^{c})$$
for some $c > 0$ (see for example, Exercise 7 in Chapter 11.3 of \cite{montgomery_vaughan}).
For the first term, by \cref{prop:explicit_1_bound} with $z=-1$ together with the Lipschitz estimate when $f = \lambda$, we have
$$\frac{1}{X}\int_X^{2X}
\abs{\frac{1}{h}\sum_{\substack{x\leq n\leq x+h\\ n\in \mathcal{S}}}\chi(n)\lambda(n)-\frac{1}{X}\sum_{\substack{X\leq n\leq 2X}\\ n\in \mathcal{S}}\chi(n)\lambda(n)}^2 dx \ll_A  \left(\frac{\log P_1}{\log Q_1}\right)^2 + \frac{(\log Q_1)^{1/3}}{P_1^{\varepsilon/3-\eta}} +\frac{1}{(\log X)^A}.
$$
Then we observe
\begin{align*}
    \abs{\frac{1}{X}\sum_{\substack{X\leq n\leq 2X\\ n\in \mathcal{S}}}\chi(n)\lambda(n)}^2 &\ll \abs{\frac{1}{X}\sum_{X\leq n\leq 2X}\chi(n)\lambda(n)}^2 + \abs{\frac{1}{X}\sum_{\substack{X\leq n\leq 2X\\ n\notin \mathcal{S}}}\chi(n)\lambda(n)}^2\\
    &\ll \left(\frac{\log P_1}{\log Q_1}\right)^2,
\end{align*}
as by \cref{prop:sieve_error}, the sieve error is dominant. 
We rewrite the second term on the right-hand side of \eqref{eq:liouville_triangle_eq} as 
\begin{align}{\label{eq:sieve_error_bounds}}
    \abs{\frac{1}{h}\sum_{\substack{x\leq n\leq x+h \\ n\notin \mathcal{S}}} 1} &= \abs{1 + O(1/h) - \frac{1}{h}\sum_{\substack{x\leq n\leq x+h \\ n\in \mathcal{S}}} 1}\\
    &\ll \abs{\frac{1}{X}\sum_{\substack{X\leq n\leq 2X \\ n\in\mathcal{S}}}1 - \frac{1}{h}\sum_{\substack{x\leq n\leq x+h \\ n\in \mathcal{S}}}1} + \abs{\frac{1}{X}\sum_{\substack{X\leq n\leq 2X \\ n\notin \mathcal{S}}}1} + O(1/h).\nonumber
\end{align}
For the first term we apply \cref{prop:explicit_1_bound} with $z=1, q=1$ and $\chi$ the trivial character. For the second term we apply \cref{prop:sieve_error}.
When $h \leq \exp\left(\frac{\log X_0}{\log\log X_0}\right)$, then take $P_1 = (\log Q_1)^{\frac{100}{\eta}}$ and $Q_1 = h$ to conclude 
$$\frac{1}{X}\int_X^{2X} \abs{\frac{1}{h} \sum_{x\leq n\leq x+h} \chi(n)\lambda(n)}^2 dx \ll \frac{(\log\log h)^2}{(\log h)^2}.$$

If $h > \exp\left(\frac{\log X_0}{\log\log X_0}\right)$, then we take $Q_1 = \exp\left(\frac{\log X_0}{\log\log X_0}\right)$ and $P_1$ to be $(\log Q_1)^{\frac{100}{\eta}}$ to get a bound of 
$$\frac{1}{X}\int_X^{2X} \abs{\frac{1}{h} \sum_{x\leq n\leq x+h} \chi(n)\lambda(n)}^2 dx \ll \frac{(\log\log X)^4}{(\log X)^2},$$
and then we take the sum of the bounds in each regime of $h$ to give uniformity in $h$. 
\end{proof}

In practice, using \cref{lem:lipschitz} to let us work with a long moving average enables us to dispose of the low frequencies when we eventually apply Parseval's identity.
The following form of Parseval is from Section 7 of \cite{matomaki_radziwill}.

\begin{proposition}{\label{prop:parseval_1}}
    Fix $R > 1$. Assume $1\leq h_1\leq h_2 = \frac{X}{(\log X)^{3R}} \leq X$.
    Let $f:\N\to \C$ be a 1-bounded, multiplicative function.
    Consider for $X\leq x\leq 2X$
    $$S_j(x) = \sum_{\substack{x\leq m\leq x+h_j \\ m\in \mathcal{S}}} f(m)$$
    and write 
    $$F(s) = \sum_{\substack{X\leq m\leq 3X \\ m\in\mathcal{S}}} f(m)m^{-s}$$
    Then if $T_0 = (\log X)^R$
    \begin{align*}
        \frac{1}{X}\int_X^{2X} \abs{\frac{1}{h_1}S_1(x) - \frac{1}{h_2}S_2(x)}^2 dx
        &\ll \frac{1}{(\log X)^R} + \int_{T_0}^{X/h_1} |F(1+it)|^2 \, dt \\
        &\quad + \max_{T \in \left(\frac{X}{h_1}, \frac{X}{8}\right]} \frac{X / h_1}{T} \int_{T}^{2T} |F(1+it)|^2 \, dt + \frac{1}{h_1}
    \end{align*}
\end{proposition}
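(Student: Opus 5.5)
This is the Parseval-type reduction from Section 7 of \cite{matomaki_radziwill} (their Lemma 14), and I would follow that argument, adapting only the choice of $h_2$ and of the low-frequency cutoff $T_0$.

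\textbf{Step 1: Perron's formula.} For $x\in[X,2X]$ and $j=1,2$, write
$$\frac{1}{h_j}S_j(x)=\frac{1}{2\pi i}\int_{1-i\infty}^{1+i\infty}F(s)\,\frac{(x+h_j)^s-x^s}{h_j s}\,ds,$$
up to the usual boundary issues. These can be removed by smoothing or by an averaging argument in $x$, which costs $O(1/h_1)$ in mean square. Subtracting the two identities, the kernel is
$$k(s)=\frac{(x+h_1)^s-x^s}{h_1 s}-\frac{(x+h_2)^s-x^s}{h_2 s}.$$
Taylor expansion in $h_j s/x$ shows that, for $|t|$ small, both terms equal $x^{s-1}(1+O(h_j|t|/X))$. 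Hence $|k(1+it)|\ll h_2|t|/X$ for $|t|\le X/h_2=(\log X)^{3R}$. I also have $|k(1+it)|\ll \min(1,X/(h_1|t|))$ for all $t$, and the $h_2$-term is bounded by $\min(1,X/(h_2|t|))$.

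\textbf{Step 2: Split the frequencies into three ranges.} The low range is $|t|\le T_0=(\log X)^R$. There I bound $F(1+it)$ trivially by $O(1)$, since $\sum_{m\le 3X}1/m$ restricted to a dyadic block is $O(1)$, and I use the kernel bound $h_2|t|/X\le (\log X)^{R-3R}$. The mean-square contribution over $x$ is then at most $T_0^2(\log X)^{-4R}\ll (\log X)^{-R}$; indeed even a crude Cauchy--Schwarz in $t$ suffices. The remaining ranges are $T_0<|t|\le X/h_1$ and the dyadic blocks $[T,2T]$ with $T>X/h_1$.

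\textbf{Step 3: Mean square of the remaining ranges.} Here the key tool is the $L^2$ bound from \cite{matomaki_radziwill}, Lemma 14. It gives
$$\frac1X\int_X^{2X}\Big|\int_{\mathcal U}F(1+it)\,x^{it}w(t)\,dt\Big|^2dx\ll \int_{\mathcal U}\int_{\mathcal U}|F(1+it)F(1+iu)||w(t)w(u)|\min\Big(1,\frac1{|t-u|}\Big)\,dt\,du.$$
Using $|ab|\le|a|^2+|b|^2$ and integrating out $u$ against $\min(1,1/|t-u|)$ costs a factor $\log X$. To avoid this loss, I follow MR and first insert a smooth weight in $x$, which makes the off-diagonal decay rapidly, so the kernel becomes integrable. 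In the range $T_0<|t|\le X/h_1$ I use $|w|\ll1$, which yields $\int_{T_0}^{X/h_1}|F|^2$. For $|t|>X/h_1$ I split dyadically: on $[T,2T]$ the weight satisfies $|w|^2\ll (X/(h_1T))^2$. Summing over $T$ and taking the maximum (the geometric factor $(X/(h_1T))$ leaves one power) gives the third term. Frequencies $|t|>X/8$ can be handled by the same argument or folded into the $1/h_1$ term.

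\textbf{Main obstacle.} The main difficulty is carrying out Step 3 without losing a $\log X$. That loss is avoided exactly by the smoothing in $x$ used in \cite{matomaki_radziwill}, and I would copy their treatment verbatim. The new feature, the choice $h_2=X/(\log X)^{3R}$ together with the cutoff $T_0=(\log X)^R$, enters only through the low-frequency bound in Step 2.
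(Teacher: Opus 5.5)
Your proposal is correct and follows the paper's route. The paper also defers the high-frequency analysis to Lemma 14 of \cite{matomaki_radziwill} and only recomputes the $|t|\le T_0$ contribution, bounding it by $T_0^2h_2/X=(\log X)^{-R}$ (your $(\log X)^{-2R}$ is the square of this and equally fine); it also discards $T>X/8$ via the mean value theorem exactly as you suggest. Your Step~3 sketch treats the weight as $x^{it}w(t)$ with $w$ independent of $x$, which is not literally the form of $\frac{(x+h)^{1+it}-x^{1+it}}{h(1+it)}$, but since you, like the paper, import that step from Matom\"aki--Radziwi\l{}\l{}, this is not a gap relative to the paper's proof.
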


\begin{remark}{\label{rmk:T_range}}
In \cite{matomaki_radziwill}, the second integral is left as 
$$\max_{T > X/h_1}\frac{X/h_1}{T}\int_{T}^{2T}|F(1+it)|^2 dt.$$
We've upper bounded the range of $T$ by noting that if $T$ is larger than $\frac{X}{8}$, then the entire integral (including the prefactor) is at most $\ll \frac{1}{h_1}$ via \cref{lem:mvt},
which is an admissible error.
\end{remark}

\begin{proof}

The statement of \cref{prop:parseval_1} is essentially the same as in Lemma 14 of \cite{matomaki_radziwill}, thus we only prove the main difference. The only modification we need to make is in the first term (corresponding to low frequencies, with $|t| < T_0$) in order to get an error term of the form $\frac{1}{(\log X)^R}$. For convenience, this $R$ will be the same as the $R$ in the definition of $P$ and $Q$ in \cref{sec:2.2}.
We will not go through the proof of \cref{prop:parseval_1} and instead refer the reader to Lemma 14 of \cite{matomaki_radziwill}.
We note that in their proof, the contribution from the small frequencies is at most 
$$T_0^2 X \frac{h_2}{X^2} \ll \frac{1}{(\log X)^R}$$
given our choices of $T_0$ and $h_2$.
This is now possible because in the Lipschitz identity, we can have $Y$ further away from $X$.
Provided we choose $Y = h_2$, we can remove frequencies up to $(\log X)^R$. Lastly, \cref{rmk:T_range} justifies the truncation of the $T$-range and the bound $\ll \frac{1}{h_1}$. 
\end{proof}

We are now in a position to prove the main proposition.

\begin{proof}[Proof (\cref{prop:explicit_1_bound}).]
    
Note that by \cref{prop:parseval_1} as well as \cref{lem:lipschitz}, in order to bound the left-hand side, it suffices to bound the first integral on the right-hand side of \cref{prop:parseval_1}, and the second integral can be bounded in essentially the exact same way (we will point out where the only difference must be made).
We proceed via essentially the same method of proof as \cite{matomaki_radziwill}, although, as mentioned earlier, the power-saving parameters $\alpha_j$ will be different and the last range of primes will be treated using Heath-Brown's identity.
For each $j$, we will apply \cref{lem:ramare}, separating our long Dirichlet polynomial into two pieces, one supported on primes in $[P_j, Q_j]$, and the other a corresponding remainder term. 

When we apply Ramar\'e's identity, for each interval $[P_j, Q_j]$ for $1\leq j\leq J$ and $v_j \in [H_j\log P_j, H_j\log Q_j]$, define the Dirichlet polynomials
$$Q_{j, v, H_j}(s) = \sum_{\substack{P_j\leq p \leq Q_j \\ e^{v/H_j}\leq p \leq e^{(v+1)/H_j}}} \frac{\chi(p)f(p)}{p^s},$$
$$R_{j, v,H_j}(s) = \sum_{\substack{Xe^{-v/H_j}\leq m\leq 2Xe^{-v/H_j} \\ m\in \mathcal{S}_j}} \frac{\chi(m)f(m)}{m^{s}}\frac{1}{\omega_{[P_j,Q_j]}(m)+1}.$$
Here, the definition of the restricted set $\mathcal{S}_j$ is the same as the definition of $\mathcal{S}$, but we drop the constraint that $n$ needs a prime factor in the range $[P_j, Q_j]$. 
The Dirichlet polynomials $Q_{v, H}(s)$ and $R_{v, H}(s)$, that omit the parameter $j$, correspond to the distinguished interval $[P,Q]$ with parameters given in \cref{eq:PQ_choices} and $v\in [H\log P, H\log Q]$ for $H$ a distinguished scale to be specified later. The corresponding set of restrictions on the remainder polynomial $R_{v, H}(s)$ will be the set $\mathcal{S}$. 

We form a partition of the interval $[T_0, T]$ as follows:
    $$[T_0, T] := \bigcup_{j=1}^J \mathcal{T}_j \cup \mathcal{U}$$
where
    $$\mathcal{T}_j = \left\{t\in [T_0, T]: j \text{ is minimal such that } \abs{Q_{j,v,H_j}(1+it)}\leq e^{-\alpha_j v/H_j} \text{ for all }v\in \mathcal{I}_j\right\},$$
and $\mathcal{U} = [T_0, T] \setminus \bigcup_{j=1}^{J}\mathcal{T}_j$.
Here, $\mathcal{I}_j$ is the interval $[H_j\log P_j, H_j \log Q_j]$ as before, where we have yet to specify the value of $H_j$. We will produce the following bounds:
\begin{equation}{\label{eq:integral_T_1}}
    \int_{\mathcal{T}_1}\abs{F(1+it)}^2 dt \ll \left(\frac{T}{X/Q_1}+1\right)\frac{(\log Q_1)^{\frac{1}{3}}}{P_1^{\frac{\varepsilon}{3}-\eta}} + \frac{1}{H_1} + \frac{1}{P_1},
\end{equation}
and for $2\leq j\leq J$,
\begin{equation}{\label{eq:integral_T_j}}
    \int_{\mathcal{T}_j}\abs{F(1+it)}^2 dt \ll \left(\frac{T}{X}+1\right)\frac{1}{j^2 P_1} + \frac{1}{H_j} + \frac{1}{P_j}.
\end{equation}

Note that \eqref{eq:integral_T_1} is slightly different (corresponding to the $\frac{\varepsilon}{3}$ fraction rather than the $\frac{1}{6}$ from \cite{matomaki_radziwill}).
For completeness we sketch the proof of this bound again as the parameters have changed.

Applying \cref{lem:ramare} with the bounded multiplicative function $\chi(n)f(n)$ and the interval $[P_1, Q_1]$ yields the bound
\begin{align}{\label{eq:t1_calculation}}
    \int_{\mathcal{T}_1}\abs{F(1+it)}^2 dt &\ll H_1 \log Q_1 \sum_{v\in \mathcal{I}_1}\int_{\mathcal{T}_1}\abs{Q_{1, v, H_1}(1+it)R_{1,v,H_1}(1+it)}^2 dt + \frac{1}{H_1} + \frac{1}{P_1} \nonumber\\
    &\ll H_1\log Q_1 \sum_{v\in \mathcal{I}_1} e^{-2\alpha_1 v/H_1}\int_{\mathcal{T}_1} \abs{R_{1,v,H_1}(1+it)}^2 dt + \frac{1}{H_1} + \frac{1}{P_1} \nonumber\\
    &\leq H_1 \log Q_1 \left(\frac{T}{X/Q_1} +1\right) \sum_{v\in \mathcal{I}_1}e^{-2\alpha_1 v/H_1} + \frac{1}{H_1} + \frac{1}{P_1}
\end{align}
where the additional subscripts of 1 indicate that we are working relative to the interval $[P_1, Q_1]$, and where in the second line we have applied the definition of $\mathcal{T}_1$ and then the mean value theorem on the remaining Dirichlet polynomial.
From here we compute the geometric series and substitute in the definition of $\alpha_1$ to get that the first quantity in \eqref{eq:t1_calculation} has size 
\begin{equation}{\label{eq:unbalanced_t1_bound}}
    H_1^2 \log Q_1 P_1^{-\varepsilon +3\eta},
\end{equation}
and upon choosing $H_1 = \frac{P_1^{\frac{\varepsilon}{3}-\eta}}{(\log Q_1)^{\frac{1}{3}}}$ to balance \eqref{eq:unbalanced_t1_bound} with the $\frac{1}{H_1}$ coming from the decomposition, we obtain the stated bound for the $L^2$ norm over $\mathcal{T}_1$.
When we ultimately choose $\varepsilon$, we choose $\eta$ accordingly to ensure that this bound is admissible.

In \cite{matomaki_radziwill}, more work is required for $2\leq j\leq J$, but crucially the differences $\alpha_{j+1}-\alpha_j$ remain the same as in the original Matom\"aki--Radziwi\l{}\l{} argument, hence we can leave their estimates of these ranges completely unchanged.
All we have to do is modify the definition of $H_j$ to 
$$H_{j} = j^2 \frac{P_1^{\frac{\varepsilon}{3}-\eta}}{(\log Q_1)^{\frac{1}{3}}}.$$

\begin{remark}{\label{eq:same_integrals}}
    Note that in the proofs of equations \eqref{eq:integral_T_1} and \eqref{eq:integral_T_j}, we did not use the fact that our multiplicative functions had the form $\chi(n)$ or $\chi(n)\lambda(n)$.
    The same bounds hold for arbitrary multiplicative functions, and we will make use of the same bounds in the proofs of \cref{thm:smooth_f_thm} and \cref{thm:general_f}.
\end{remark}

Having estimated the integrals in \eqref{eq:integral_T_1} and \eqref{eq:integral_T_j}, it remains to evaluate the integral over $\mathcal{U}$.
This is our most significant departure from the Matom\"aki--Radziwi\l{}\l{} setup. First, we exclude the integers with no prime factor in $[P,Q]$:

\begin{align*}
    \int_{\mathcal{U}}\abs{\sum_{\substack{X\leq n\leq 3X \\ n\in \mathcal{S}}} \frac{\chi(n)f(n)}{n^{1+it}}}^2dt &\ll \int_{\mathcal{U}} \abs{\sum_{\substack{X\leq n\leq 3X \\ \exists p \in [P,Q]:\ p|n}} \frac{\1_{\mathcal{S}}(n) \chi(n)f(n)}{n^{1+it}}}^2 dt + \int_{\mathcal{U}} \abs{\sum_{\substack{X\leq n\leq 3X \\ \forall p \in [P,Q]:\ p\nmid n}} \frac{\1_{\mathcal{S}}(n) \chi(n)f(n)}{n^{1+it}}}^2 dt\\
    &\ll_R \int_{\mathcal{U}} \abs{\sum_{\substack{X\leq n\leq 3X \\ \exists p \in [P,Q]:\ p|n}} \frac{\1_{\mathcal{S}}(n) \chi(n)f(n)}{n^{1+it}}}^2 dt + \frac{1}{(\log X)^{R+o(1)}}, 
\end{align*}
where we extended the range of integration in the second integral from $\mathcal{U}$ to $[0,T]$ and used the classical mean value theorem (\cref{lem:mvt}) along with \cref{lem:last_range_sieve_error}. 

Now we will apply the Ramar\'e decomposition with the large prime interval $[P,Q]$, and consider the casework arising from how long the resulting polynomial $Q_{v, H}(1+it)$ ends up being. For notational convenience, let $\mathcal{S}^+$ denote the set of integers in $\mathcal{S}$ with the additional restriction that they contain a prime factor in $[P,Q]$. 

Applying \cref{lem:ramare} gives 
\begin{equation}{\label{eq:last_ramare}}
    \int_{\mathcal{U}}\abs{\sum_{\substack{X\leq n\leq 3X \\ n\in \mathcal{S}^+}} \frac{\chi(n)f(n)}{n^{1+it}}}^2dt \ll H\log \left(\frac{Q}{P}\right) \sum_{v\in \mathcal{I}} \int_{\mathcal{U}} \abs{Q_{v,H}(1+it) R_{v,H}(1+it)}^2 dt + \frac{1}{H} + \frac{1}{P},
\end{equation}
where here $H$ is a parameter to be chosen.
We will furthermore take the supremum over $v\in\mathcal{I}$ and pick up a prefactor of $H \log\left(\frac{Q}{P}\right)$ corresponding to the number of terms.

Note that because $f$ is constant on primes (recall $f(p) = z$ for all $p$), 
$$Q_{v,H}(1+it) = z \sum_{e^{v/H}\leq p\leq e^{(v+1)/H}} \frac{\chi(p)}{p^{1+it}}.$$
Note now that $Q_{v, H}$ could be as small as $P = \exp\left(\frac{1}{R}\frac{\log X \log\log\log X}{\log \log X}\right) = X^{o_R(1)}$ or as long as $Q \asymp X$.

\textbf{Case I: }Suppose that $Q_{v,H} < X^{\frac{4}{10}}$ (we only need that this exponent is strictly less than $\frac{1}{2}$). In this case we may apply \cref{lem:pnt}, and as we know that $\inf (\mathcal{U}) \geq T_0 = (\log X)^R$, and $Q_{v,H}(1+it)$ is (always) sufficiently long, we save up to $R$ powers of $\log X$. 

In particular, the right-hand side of \eqref{eq:last_ramare} is 
\begin{equation}{\label{eq:hm_case_1}}
    \ll (\log X)^{-R + 2A + 2} \int_{\mathcal{U}} \abs{R_{v,H}(1+it)}^2 dt + \frac{1}{(\log X)^A}
\end{equation}
where we have taken $H = (\log X)^A$.
We can choose $R$ so that $R > 4A$, say, so that we are certain to save enough powers of $\log X$ here.
By a standard covering argument, we may bound the integral by the sum over a discrete, well-spaced subset $\mathcal{E}\subseteq\mathcal{U}$.
Note that because $\mathcal{E}$ is a union of well-spaced subsets of the sets of large values of the polynomials $Q_{J, v,H_J}(s)$ over $v\in \mathcal{I}_J$, we have the bound
\begin{equation}{\label{eq:E_bound}}
    |\mathcal{E}| \ll T^{2\alpha_J + o(1)} \ll X^{\varepsilon}
\end{equation}
via \cref{lem:large_values} (here we use the definition of $J$ and consequent lower bound on $P_J$, as Matom\"aki-Radziwi\l{}\l{} do).  
Note that regardless of whether we consider the first or second integral on the right-hand side of \cref{prop:parseval_1}, we have $T \ll X$.

In particular, $R_{v, H}$ has length $\gg X^{\frac{6}{10}}$, so we will certainly have that $|\mathcal{E}| T^{\frac{1}{2}} \leq R_{v, H}$ and therefore upon applying \cref{lem:hm} to $\sum_{t\in\mathcal{E}}\abs{R_{v, H}(1+it)}^2$, the right-hand side of \eqref{eq:hm_case_1} can be made at most
$$\ll \frac{1}{(\log X)^A}.$$

The more challenging cases arise when the prime-supported polynomial $Q_{v,H}(s)$ is long, so that we may not simply apply the Hal\'asz--Montgomery estimate to $R_{v,H}(s)$.

\textbf{Case II: }Now suppose $Q_{v,H} > X^{\frac{4}{10}}$. 
In this case we will apply the decomposition from \cref{lem:heath_brown} to $Q_{v,H}$ with $K=3$.
This enables us more flexibility in the $L^2$-$L^{\infty}$ estimate at the cost of some constant number of logarithms (which we will regain).
We bound the right-hand side of \eqref{eq:last_ramare} by
\begin{align}{\label{eq:after_heath_brown}}
    &\ll (\log X)^{2A+2} \cdot L \sum_{\ell=1}^L \int_{\mathcal{U}} \abs{ \prod_{j=1}^{J_{\ell}}M_{\ell,j}(1+it)}^2 \abs{R_{v, H}(1+it)}^2 dt \nonumber\\
    &\hspace{6cm} + (\log X)^{2A+2} \int_{\mathcal{U}}\abs{E(1+it)R_{v, H}(1+it)}^2 dt
\end{align}
using Cauchy--Schwarz.
Each $J_{\ell}$ is at most 6.
We upper bound the integral of the error term over $\mathcal{U}$ by the integral over the entire $[-T, T]$ interval.
By the mean value theorem, this error is 
$$\ll (\log X)^{2A+2} \left(\frac{T}{X}+1\right) (\log X)^{-3A-2}.$$
Because the Dirichlet polynomial is a product, we can factor the $L^2$ norm appearing on the right-hand side.
We bound the $L^2$ norm of the coefficients of $R_{v,H}(1+it)$ trivially by $\frac{1}{R_{v, H}}$, and as in the proof of item (7) of \cref{lem:heath_brown}, we can upper bound the $L^2$ norm of the coefficients of $E(1+it)$ by $\ll \frac{1}{P}(\log X)^{-(3A+2)}$, using of course that $\log P \approx \log X$ (up to $(\log X)^{o(1)}$ terms) and the fact that we can save arbitrarily many powers of $\log P$.
Additionally note that $E R_{v, H} \asymp X$, and so the error is $\ll (\log X)^{-(3A+2)}$. 
By choosing to save $3A+2$ powers of $\log X$ here, we ensure that the error from the Heath-Brown decomposition is $\ll (\log X)^{-A}$. 
We can take the supremum over $\ell$ of the summation and multiply by another factor of $L$.
All together, the right-hand side of \eqref{eq:after_heath_brown} can be bounded by 
$$\ll_C (\log X)^C \int_{\mathcal{U}} \abs{\prod_{i=1}^{J_{\ell}} M_i(1+it)}^2 |R_{v, H}(1+it)|^2 dt,$$
where $C$ is some absolute constant (absolute as we took $K=3$) coming from item (2) of \cref{lem:heath_brown}.

\textbf{Case IIa: }Suppose that there is a polynomial $M_i$ with length in $\left[Q_{v,H}^{\frac{1}{8}}, Q_{v,H}^{\frac{1}{3}}\right]$.
Without loss of generality suppose that this is $M_{J_{\ell}}(s)$. If $M_{J_{\ell}}$ has coefficients $\chi(n)\mu(n)$, apply \cref{lem:pnt_mobius} to $\abs{M_{J_{\ell}}(1+it)}$ to save, say, $A + 100$ more powers of $(\log X)$ as we lose from the decomposition (as well as any powers of $\log X$ we might lose from choosing $H = (\log X)^A$ and the Ramar\'e prefactors).
Alternatively, suppose $M_{J_{\ell}}$ has coefficients $\chi(n)$ or $\chi(n)\log n$. We know $M_{J_{\ell}}> Q_{v, H}^{\frac{1}{8}} > X^{\frac{1}{20}}$, hence we can apply \cref{lem:exponent_pairs} in order to save $(\log X)^{-R+1}$ (where the +1 comes from the fact that $M_{J_{\ell}}$ may correspond to the sum with coefficients $\chi(n)\log(n)$ in which case we apply partial summation at the cost of another $O(\log X)$ prefactor).
The right-hand side of \eqref{eq:last_ramare} is 
\begin{equation}{\label{eq:hm_case_2}}
    \ll_C (\log X)^{-R+C} \int_{\mathcal{U}} \abs{\prod_{i\leq J_{\ell}-1} M_i(1+it)}^2 \abs{R_{v,H}(1+it)}^2 dt
\end{equation}
The remaining polynomial has length at least $Q_{v,H}^{\frac{2}{3}} R_{v,H}$, which has length in $[X^{\frac{2}{3}}, X^{\frac{13}{15}}]$ (for the lower bound, consider $Q_{v, H}\approx X$ and $R_{v, H} \approx 1$, and for the upper bound consider $R_{v, H} = X^{\frac{6}{10}}$ and $Q_{v, H}= X^{\frac{4}{10}}$, so $Q_{v,H}^{\frac{2}{3}} R_{v,H}  = X^{\frac{13}{15}}$). In particular it is long enough that we may apply \cref{lem:hm} (again noting \eqref{eq:E_bound}).
As before, in this case we may upper bound \eqref{eq:hm_case_2} again by 
$$\ll \frac{1}{(\log X)^A}$$
for $R$ chosen sufficiently large in terms of $A$. 

\textbf{Case IIb: }Suppose there is \emph{no} polynomial $M_i(s)$ with length in $[Q_{v,H}^{\frac{1}{8}}, Q_{v,H}^{\frac{1}{3}}]$.
The product of $\prod_{i=1}^6 M_i(s)$ has length $Q_{v, H}^{1-o(1)}$, and therefore not all $M_i(s)$ can have lengths $ < Q_{v, H}^{\frac{1}{8}}$, and so by the pigeonhole principle, the hypothesis that there are no Dirichlet polynomials with length in $[Q_{v,H}^{\frac{1}{8}}, Q_{v,H}^{\frac{1}{3}}]$ tells us that at least one of the Dirichlet polynomials has length (strictly) at least $Q_{v, H}^{\frac{1}{3}}$. Hence by item (5) of \cref{lem:heath_brown}, it has coefficients $\chi(n)$ or $\chi(n)\log(n)$.
It will, in fact, be enough to have one such sum, even if all we know is that it has length $> Q_{v,H}^{\frac{1}{3}}$.
Without loss of generality let this be $M_1(s)$.
If any of these sums are of the form $\sum_{N\leq n\leq 2N}\chi(n)(\log n) n^{-1-it}$, we will apply partial summation at the cost of another logarithm, and account for it in our constant $C$.
We trivially bound any of the other remaining Dirichlet polynomials by $\ll 1$ (including $|R_{v,H}(1+it)|$).
Note that $M_1(s)$ has length in $(X^{\frac{4}{30}}, X]$, and we want to estimate 
\begin{equation}{\label{eq:zeta_sum_bound}}
    (\log X)^C \int_{\mathcal{U}} \abs{M_1(1+it)}^2 dt.
\end{equation}
We apply \cref{lem:exponent_pairs}, noting $|t|\leq X$
\begin{align*}
     \abs{M_1(1+it)} &\ll X^{\kappa} M_1^{\nu-\kappa-1+o(1)} + \frac{1}{1+|t|}\\
     &\ll  X^{-\kappa+o(1)} + \frac{1}{1+|t|},
\end{align*}

as $M_1 > X^{\frac{4}{30}} > X^{\frac{1}{24}}$. Again by passing to a well-spaced subset $\mathcal{E}\subseteq \mathcal{U}$, we have the bound

$$(\log X)^C \int_{\mathcal{U}} \abs{M_1(1+it)}^2 dt \ll (\log X)^C \sum_{t\in \mathcal{E}} X^{-2\kappa+o(1)} + (\log X)^C \sum_{t\in \mathcal{E}} \frac{1}{(1+|t|)^2}.$$

For the first term, we bound it trivially by $(\log X)^C |\mathcal{E}|X^{-\kappa}$, and by taking $\varepsilon = \frac{\kappa}{2}$, we get a power saving here.
For the second term, note that the $t\in \mathcal{E}$ are well-spaced, hence if $N_0 = \min_{t\in \mathcal{E}}|t|$, we have  
$$\sum_{t\in \mathcal{E}} \frac{1}{|t|^2} \leq \sum_{n = N_0}^{\infty} \frac{1}{n^2} \ll \frac{1}{N_0} \leq (\log X)^{-R}$$
because $\mathcal{E}\subseteq \mathcal{U}$.  
With an adequate choice of $R$, we conclude \eqref{eq:zeta_sum_bound} is certainly $\ll \frac{1}{(\log X)^A}$.
We take $\eta = \frac{\varepsilon}{4} = \frac{\kappa}{8}$.

This concludes the proof of \cref{prop:explicit_1_bound}.
\end{proof}
For our applications in \cref{sec:5}, we actually want a slightly different version of \cref{thm:liouville_bound} coming from \cref{prop:explicit_1_bound}.

\begin{corollary}{\label{cor:short_interval_major_arc}}
Fix $A, B > 0$. Let $X \geq h \geq 10$, and let $\mathcal{S}$ be as in \cref{prop:explicit_1_bound} with $\varepsilon = \frac{\kappa}{2}$ and $\eta = \frac{\varepsilon}{4}$. As before, in the definition of the quantities $P, Q$ (as given in \eqref{eq:PQ_choices}), take the quantity $R$ to be sufficiently large in terms of $A$, and set $Y = \frac{X}{(\log X)^{3R}}$. Let $q \leq (\log X)^B$ and let $\chi$ be a Dirichlet character mod $q$. Then we have that
$$
\frac{1}{X}\int_X^{2X}\abs{\frac{1}{h}\sum_{\substack{x\leq m\leq x+h\\ m\in \mathcal{S}}}\chi(m)\lambda(m)}^2 dx \ll_{A,B} \frac{(\log Q_1)^{\frac{1}{3}}}{P_1^{\frac{\varepsilon}{3}-\eta}} + \frac{1}{(\log X)^A}.
$$
\end{corollary}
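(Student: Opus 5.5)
We derive the corollary from \cref{prop:explicit_1_bound} with $z=-1$. The difference is that the long moving average has been removed, so we must show that this average is small. By $|a+b|^2\ll |a|^2+|b|^2$, the left-hand side of the corollary is
\[
\ll \frac{1}{X}\int_X^{2X}\abs{\frac{1}{h}\sum_{\substack{x\leq m\leq x+h\\ m\in\mathcal{S}}}\chi(m)\lambda(m)-\frac{1}{Y}\sum_{\substack{x\leq m\leq x+Y\\ m\in\mathcal{S}}}\chi(m)\lambda(m)}^2dx+\sup_{x\in[X,2X]}\abs{\frac{1}{Y}\sum_{\substack{x\leq m\leq x+Y\\ m\in\mathcal{S}}}\chi(m)\lambda(m)}^2 .
\]
\cref{prop:explicit_1_bound} bounds the first term by $\frac{(\log Q_1)^{1/3}}{P_1^{\varepsilon/3-\eta}}+(\log X)^{-A}$, with $R$ large in terms of $A$. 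It remains to bound the second term by $(\log X)^{-A}$.

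We will not use \cref{lem:lipschitz} here, since it would introduce the sieve error $\frac{\log P_1}{\log Q_1}$, which is not among the allowed errors. Instead we estimate the average over $[x,x+Y]$ directly on the Dirichlet polynomial side. By Perron's formula,
\[
\frac{1}{Y}\sum_{\substack{x\leq m\leq x+Y\\ m\in \mathcal{S}}}\chi(m)\lambda(m)=\frac{1}{2\pi Y}\int_{-T_1}^{T_1}F(1+it)\,\frac{(x+Y)^{1+it}-x^{1+it}}{1+it}\,dt+\text{(truncation error)},
\]
where $F$ is the polynomial of \cref{prop:parseval_1} and $T_1=(\log X)^{C'}$. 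If the truncation error is too large, we use a smoothed indicator instead: $Y=X(\log X)^{-3R}$ is only a log-power shorter than $X$, so smoothing at scale $Y(\log X)^{-A}$ costs $O((\log X)^{-A})$ and makes the kernel decay rapidly beyond $|t|\asymp (\log X)^{3R+A}$. The kernel has size $\ll \min(1, X/(Y|t|))$. It therefore suffices to show $\sup_{|t|\leq (\log X)^{O(R)}}|F(1+it)|\ll (\log X)^{-A-O(R)}$, and then integrate trivially.

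This pointwise bound for small $t$ is the main obstacle, because the restriction $m\in\mathcal{S}$ destroys the direct link with $L(s,\chi)$. We plan to handle it through the factorization structure. First, drop the integers with no prime factor in $[P,Q]$ at a cost of $(\log X)^{-R+o(1)}$ by \cref{lem:last_range_sieve_error}; this is a pointwise bound since coefficients are $1$-bounded and $n\asymp X$. Then apply Ramar\'e's identity on $[P,Q]$ pointwise, as in the proof of \cref{lem:ramare} but without integrating over $t$. This writes $F$ as a sum over $v$ of $Q_{v,H}R_{v,H}$ plus errors. The errors must now be controlled in $\ell^1$ rather than $L^2$, which is plausible since they come from the $p^2\mid n$ terms and the $1/H$ cross-condition boundary, both of size $\ll \frac{1}{H}+\frac{1}{P}$.

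For the main terms, bound $|R_{v,H}(1+it)|\ll 1$ trivially and $|Q_{v,H}(1+it)|$ by \cref{lem:pnt}. This covers all $|t|\leq X$, including $t$ near $0$ when $\chi$ is nonprincipal; the first term $\frac{\log X}{1+|t|}$ of \cref{lem:pnt} is the worry there. For $|t|\geq 100$ we instead get a bound of $(\log X)^{-A'}$ only after adjusting. If the small-$t$ region remains problematic, we fall back on \cref{lem:pnt_mobius}-style zero-free-region estimates applied to each $(1+\Delta)$-adic piece of the Heath-Brown decomposition (\cref{lem:heath_brown}) of $Q_{v,H}$. Here $\chi$ is nonprincipal or $\mu$-weighted, so no pole arises. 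With a Siegel-type saving from every such piece, the total is $\ll H\log(Q/P)(\log X)^{-A'}$. Taking $A'$ and then $R$ large in terms of $A$ gives the second term $\ll(\log X)^{-A}$, and completes the proof.
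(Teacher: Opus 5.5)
Your first step, splitting off the long moving average and applying \cref{prop:explicit_1_bound} with $z=-1$, is exactly the paper's. Your bound for $\sup_x\abs{\frac1Y\sum_{x\le m\le x+Y,\, m\in\mathcal S}\chi(m)\lambda(m)}$, however, has a genuine gap.

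The Perron kernel has size $\asymp 1$ on $\abs t\le X/Y=(\log X)^{3R}$. So you need $F(1+it)$ small there, including near $t=0$ when $\chi$ is principal. The corollary must cover that case (e.g.\ $q=1$), and \cref{sec:5} uses it. Ramar\'e's identity together with the trivial bound $\abs{R_{v,H}}\ll 1$ cannot deliver this. Take $\chi=\chi_0$ and $\abs t\le H$. Across $I_v=[e^{v/H},e^{(v+1)/H}]$ the phase $p^{-it}$ moves by at most one radian, so $\abs{Q_{v,H}(1+it)}\gg\sum_{p\in I_v}\frac1p$. Summing, $\sum_v\abs{Q_{v,H}(1+it)}\gg\sum_{P\le p\le Q}\frac1p\asymp\log\log\log X$, which is worse than the trivial bound.

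The term $\frac{\log X}{1+\abs t}$ in \cref{lem:pnt} is not an artifact of that proof. The prime number theorem main term $\frac{1}{\log P}\int_P^{2P}u^{-1-it}\,du$ has size $\frac{\abs{2^{-it}-1}}{\abs t\log P}$, so you gain nothing until $\abs t\ge(\log X)^{A'}$.

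The Heath--Brown fallback fails for the same reason. For $\chi_0$, the pieces with coefficients $\chi_0(n)$ or $\chi_0(n)\log n$ do feel the pole of $\zeta$, so your claim ``$\chi$ is nonprincipal or $\mu$-weighted'' is false here. More generally, no decomposition of a prime sum can reveal cancellation that the prime sum does not possess. At low frequencies the cancellation of $\lambda$ lives in $1/L(s,\chi)$, not in any prime-supported factor.

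The missing idea is to remove $\1_{\mathcal S}$ rather than work with the restricted polynomial. Use inclusion--exclusion over $\mathcal J\subseteq\{1,\dots,J\}$, which has only $2^J=(\log X)^{o(1)}$ terms. This turns the sum into a combination of sums of the multiplicative functions $g_{\mathcal J}\chi\lambda$, whose Dirichlet series is $\frac{L(2s,\chi^2)}{L(s,\chi)}\prod_{p\in U_{\mathcal J}}(1+\chi(p)p^{-s})$. Then apply Perron directly to $[x,x+Y]$ and shift the contour to $\Re(s)=1-c$ with $c=K\frac{\log\log X}{\log X}$. This line has the following properties:
\begin{itemize}
\item it lies inside the Vinogradov--Korobov region and, since $q\le(\log X)^B$, to the right of any Siegel zero;
\item the finite Euler product over primes $\le\exp(\frac{\log X}{\log\log X})$ is $O_K(\log X)$ there;
\item you gain $X^{-c}=(\log X)^{-K}$, uniformly for principal and nonprincipal $\chi$.
\end{itemize}

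Your route as written also has a second leak. Discarding the $P$-smooth integers costs $(\log X)^{-R+o(1)}$ pointwise in $F(1+it)$. Integrating against the kernel multiplies this by roughly $X/Y=(\log X)^{3R}$. So if the auxiliary prime range uses the same $R$ as $Y$, that error alone is fatal.
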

The main difference here is that by retaining the restriction $m\in\mathcal{S}$, we can get the much stronger first term, along with the arbitrary power of logarithm savings in $X$.
Note that this does not follow from the Lipschitz estimate, as there we would pick up an error term of the form $\frac{\log P_1}{\log Q_1}$ rather than this stronger bound with power-saving in $P_1$.
We circumvent this by directly bounding the long moving average, which we may do firstly because $Y$ is not much smaller than $X$, and secondly because we know $X\leq x+Y \leq 3X$. 
\begin{proof}
    We will proceed similarly to the given proof of the Lipschitz estimate for the Liouville function.
    Observe that 
    \begin{align*}
    \frac{1}{X}\int_X^{2X}\abs{\frac{1}{h}\sum_{\substack{x\leq m\leq x+h\\ m\in \mathcal{S}}}\chi(m)\lambda(m)}^2 dx &\ll \frac{1}{X}\int_X^{2X}\abs{\frac{1}{h}\sum_{\substack{x\leq m\leq x+h\\ m\in \mathcal{S}}}\chi(m)\lambda(m) - \frac{1}{Y}\sum_{\substack{x\leq m\leq x+Y\\ m\in \mathcal{S}}}\chi(m)\lambda(m)}^2 dx \\
    &\quad\quad + \sup_{x\in[X,2X]}\abs{\frac{1}{Y}\sum_{\substack{x\leq m\leq x+Y\\ m\in \mathcal{S}}}\chi(m)\lambda(m)}^2.
    \end{align*}
    The first term is admissible by \cref{prop:explicit_1_bound} applied with $z = -1$. 
    The idea is that we know all of the primes in $\bigcup_{j=1}^J [P_j, Q_j]$ are at most $\exp\left(\frac{\log X}{\log\log X}\right)$ by our choice of $J$ and least restrictive choice of $X_0$.
    As in \cite{matomaki_radziwill}, we observe via inclusion-exclusion that 
    \begin{equation}{\label{eq:inclusion_exclusion}}
        \sum_{\substack{x \leq n \leq x+Y \\n\in\mathcal{S}}} \chi(n)\lambda(n) = \sum_{\mathcal{J} \subseteq \{1, \dots, J\}} (-1)^{\#\mathcal{J}} \sum_{x \leq n \leq x+Y} g_{\mathcal{J}}(n) \chi(n) \lambda(n),
    \end{equation}
    where for each subset $\mathcal{J}$ we define $g_{\mathcal{J}}$ to be the multiplicative function 
    $$g_{\mathcal{J}}(p^k) = \begin{cases}
        1, &p\notin \bigcup_{j\in\mathcal{J}} [P_j, Q_j]\\
        0, &\text{otherwise}
    \end{cases}.$$
    By definition of $J$, $\log Q_J \leq \frac{\log X}{\log\log X}$. As $Q_1$ is certainly lower bounded by say, 10, unwrapping the definition of $Q_J$ gives
    $$J^{4J+2} \leq \frac{\log X}{\log\log X}$$
    and so $J\log J \ll \log\log X$. Thus, $J \ll \frac{\log\log X}{\log\log\log X}$, and so in particular, $2^J = (\log X)^{o(1)}$. Thus, we have 
    $$\sum_{\substack{x \leq n \leq x+Y \\n\in\mathcal{S}}} \chi(n)\lambda(n) \ll (\log X)^{o(1)} \sup_{\mathcal{J}\subseteq J} \abs{\sum_{x\leq n\leq x+Y} g_{\mathcal{J}}(n) \chi(n)\lambda(n)},$$
    and we can therefore reduce our task to bounding the inner sum for some fixed $g_{\mathcal{J}}(n)$.
    We will deal with this using Perron's formula and contour integration.
    The (infinite) Dirichlet series associated to the multiplicative function $g_{\mathcal{J}}(n) \chi(n)\lambda(n)$ is 
    $$\frac{L(2s, \chi^2)}{L(s, \chi)} \prod_{p\in U_{\mathcal{J}}}\left(1+\frac{\chi(p)}{p^{s}}\right),$$
    where $U_{\mathcal{J}}$ is the union of intervals $[P_j, Q_j]$ for $j\in \mathcal{J}$. Hence
    $$\sum_{x\leq n\leq x+Y}g_{\mathcal{J}}(n)\chi(n)\lambda(n) = \frac{1}{2\pi i} \int_{1+\frac{1}{\log X}-iT}^{1+\frac{1}{\log X}+iT} \underbrace{\frac{L(2s, \chi^2)}{L(s, \chi)} \prod_{p\in U_{\mathcal{J}}}\left(1+\frac{\chi(p)}{p^{s}}\right)}_{F(s)} \frac{(x+Y)^{s}-x^{s}}{s}ds + O\left(\frac{X(\log X)^2}{T}\right).$$

    Firstly, the truncation error is bounded uniformly in $x \in [X, 2X]$, as it only relies on the fact that $X\leq x+Y\leq 3X$.
    In the zero-free region, $L(2s, \chi^2)$ is unproblematic (it is bounded in modulus by a constant for $\Re(s) > \frac{3}{4}$).
    We need a bound for the finite Euler product in the zero-free region, in order for the contour integration to produce a meaningful bound.
    A sufficient bound will say that the product is at most a bounded power of $\log X$, because we can win back (say) twice as many powers of $\log X$ via the contour integration step.
    The Euler product is bounded (in absolute value) by 
    \begin{equation}{\label{eq:log_EP}}
        \exp\left(\sum_{p\in U_\mathcal{J}} \frac{1}{p^{\Re(s)}} +O(1)\right).
    \end{equation}

    We will show that this contributes at most one power of $\log X$, and we will ultimately show that we can easily compensate for this loss.
    Let $\Re(s) = 1-c$, for $c = \frac{K \log \log X}{\log X}$ with $K = K(A)$ to be chosen soon. This is certainly within the zero free-region and to the right of any exceptional zero for the specified range of moduli $q$. Let $Y' = \exp\left(\frac{\log X}{\log\log X}\right),$ and observe that
    $$\sum_{p\leq Y'} \frac{p^c - 1}{p} = \sum_{p\leq Y'}\frac{e^{c\log p}-1}{p},$$
    and because for $z\in \R_{\geq 0}$, we have $e^z - 1 \leq ze^z$, it follows that 
    $$\sum_{p\leq Y'}\frac{e^{c\log p}-1}{p} \leq \frac{K}{\log Y'} \sum_{p\leq Y'}\frac{e^{c\log p} \log p}{p} \leq \frac{Ke^K}{\log Y'} \sum_{p\leq Y'}\frac{\log p}{p} \ll_K 1,$$
    where in the second to third inequality we  have used the fact that $c\log p \leq K$ in the exponent.
    Hence \eqref{eq:log_EP} differs from $\exp(\sum_{p\leq Y'} \frac{1}{p}) \ll \log Y' $ by a multiplicative constant depending on $K$, and hence is certainly bounded by $\ll\log X$. 

    After pulling the contour back to $\Re(s) = 1-c$, we must bound the Perron kernel uniformly in $x \in [X, 2X]$. Writing $(x+Y)^s - x^s = s\int_x^{x+Y}w^{s-1}\,dw$, for $\Re(s) = 1-c$ we have
    $$\abs{\frac{(x+Y)^s - x^s}{s}} \leq Y\max_{x\leq w\leq x+Y}w^{-c} \ll YX^{-c}$$
    and trivially
    $$ \abs{\frac{(x+Y)^s - x^s}{s}} \leq \frac{(x+Y)^{1-c} + x^{1-c}}{|s|} \ll \frac{X^{1-c}}{1 + |t|}.
    $$
    By splitting the $t$ integral depending on whether or not $|t|\leq \frac{X}{Y}$ we have
    \begin{align*}
    \int_{-T}^{T}|F(1-c+it)|\abs{\frac{(x+Y)^{1-c + it} - x^{1-c + it}}{1-c + it}} dt
    &\ll (\log X)^{O(1)}\left(YX^{-c}(\log X)^{3R} + X^{1-c}\int_{(\log X)^{3R}}^{T}\frac{dt}{t}\right) \\
    &\ll X^{1-c}(\log X)^{O(1)},
    \end{align*}
    where crucially the constant power of $\log X$ is some fixed constant depending only on $B$. 
    Observe that by pulling the contour back to the line $1-c$, we save
    $$X^{-c} = (\log X)^{-K},$$
    and by taking $K$ sufficiently large in terms of $R$ (which is chosen sufficiently large in terms of $A$), $A$ and $B$, the desired bound holds.
\end{proof}

\section{Improved bounds for general multiplicative functions}{\label{sec:4}}

\subsection{Multiplicative functions with smooth support}{\label{sec:4.1}}
One can apply essentially the same strategy in order to improve bounds for multiplicative functions $f$, provided that $f$ is supported only on integers with no prime factor greater than $X^{1-\delta}$, where $\delta > 0$ is some fixed but arbitrarily small positive number.

In the last range, we will apply the Perron variant of Ramar\'e's identity to obtain some bilinear structure in the integrand with an arbitrary multiplicative function $f$.
In the general case, we cannot apply Heath-Brown's identity, as we would have no quantitative control over the Dirichlet polynomials in the factorization.
In particular, we no longer have pointwise bounds that enable us to save arbitrary powers of $\log X$ (or small powers of $X$ in the case of zeta sums), hence we need to rely on Hal\'asz--Montgomery type bounds for Dirichlet polynomials supported on primes.
These bounds cannot make use of the fact that $Q_{v, H}(s)$ is a short Dirichlet polynomial, and hence would cost us in our analysis.
In the Perron variant of Ramar\'e's identity, we no longer have polynomials supported on short intervals, so we drop the subscript $H$ from the Dirichlet polynomials it produces, and we obtain sharper control of error terms at the cost of some extra bookkeeping.
As in \cite{matomaki_radziwill}, we will need to use the fact that $Q_{v}(s)$ is supported on primes to recover two logarithms, and we will need Hal\'asz' theorem to obtain some $L^{\infty}$ savings on $R_{v}(s)$. 
If $R_{v}(s)$ is \emph{too short}, we will not be able to guarantee sufficient savings, and it's possible that $f$ might be significantly more or less pretentious at scale $R_v$ than at scale $X$.
The smooth support condition for $f$ ensures that when we apply \cref{lem:perron_ramare}, $R_{v}(s)$ is sufficiently long.
Following the proof of \cref{thm:smooth_f_thm} we discuss this a bit more.
The precise variant of Hal\'asz theorem that we will use is as follows.

\begin{lemma}[Hal\'asz theorem]{\label{lem:halasz_OG}}
    Let $f:\N\to \C$ be a multiplicative function with $\abs{f(n)}\leq 1$. Let $X\geq 3$ and $U\geq 1$. Let
    $$M(f;X,U) = \min_{|t|\leq U} \mathbb{D}(f, n\mapsto n^{it}; X)^2.$$
    Then
    $$\frac{1}{X}\abs{\sum_{n\leq X} f(n)} \ll (M(f;X, U) + 1) \exp(-M(f;X, U)) + \frac{1}{U} + \frac{\log\log X}{\log X}.$$
\end{lemma}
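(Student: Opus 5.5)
This is a variant of Hal\'asz's theorem with the minimum over $|t|\le U$ rather than $|t|\le X$. I would deduce it from the Granville--Soundararajan (or Montgomery--Tenenbaum) quantitative form rather than reprove it from scratch. The starting point is the standard Hal\'asz--Montgomery--Tenenbaum inequality: for $1$-bounded multiplicative $f$ and $T\ge1$,
$$\frac{1}{X}\abs{\sum_{n\le X} f(n)} \ll \frac{1}{\log X}\int_{1/\log X}^{1}\left(\max_{|t|\le T}\abs{\frac{F(1+\sigma+it)}{1+\sigma+it}}\right)\frac{d\sigma}{\sigma} + \frac{1}{T},$$
where $F(s)=\sum_n f(n)n^{-s}$. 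This comes from Perron's formula together with Parseval/Plancherel applied to $F'/F\cdot F$ (the identity $L\!f=f*\Lambda f$). Taking $T=U$ yields the $\frac1U$ term directly.

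First, bound the Euler product pointwise. For $|t|\le U$ and $\sigma\in[1/\log X,1]$,
$$\abs{F(1+\sigma+it)} \ll \exp\left(\Re\sum_{p}\frac{f(p)p^{-it}}{p^{1+\sigma}}\right) \ll \frac{1}{\sigma}\exp\left(-\mathbb{D}(f,n^{it};e^{1/\sigma})^2\right).$$
This uses Mertens in the form $\sum_{p\le e^{1/\sigma}}\frac{1}{p}=\log(1/\sigma)+O(1)$, and the fact that primes $p>e^{1/\sigma}$ contribute $O(1)$. The pretentious distance at the smaller scale $y=e^{1/\sigma}$ then has to be compared with the distance at scale $X$. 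By the triangle inequality on $\sum_{y<p\le X}\frac{1-\Re(\cdot)}{p}\le 2\log\frac{\log X}{\log y}+O(1)$, we get $\mathbb{D}(f,n^{it};y)^2\ge M(f;X,U)-2\log(\sigma\log X)-O(1)$. Hence the maximum over $|t|\le U$ is $\ll \sigma^{-1}\min\bigl(1,(\sigma\log X)^2e^{-M}\bigr)$.

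Next, integrate in $\sigma$. The integrand is $\sigma^{-2}\min(1,(\sigma\log X)^2e^{-M})/\log X$. Split at $\sigma_0=e^{M/2}/\log X$. Below $\sigma_0$ the contribution is $\int (\log X) e^{-M}\,d\sigma/\sigma \ll e^{-M}\cdot\log(\sigma_0\log X)\asymp M e^{-M}$. Above it the contribution is $\int \frac{d\sigma}{\sigma^2\log X}\ll e^{-M/2}$. That second piece is too weak, so the crude bound has to be sharpened. The sharper form of Granville--Soundararajan gives the full statement $(1+M)e^{-M}+\frac{1}{U}+\frac{\log\log X}{\log X}$, the last term arising from the range $\sigma$ near $1$ and from the $\Lambda f$ convolution at small primes.

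The main obstacle is precisely this: obtaining $(1+M)e^{-M}$ rather than $e^{-M/2}$. In practice I would cite Tenenbaum's book (Theorem III.4.7) or Granville--Koukoulopoulos--Matom\"aki, ``When the sieve works'' (Corollary 1.2), rather than redo it. Those results state exactly this bound with the minimum restricted to $|t|\le U$ at the cost of $1/U$. If a self-contained argument is wanted, the key point is to use $\Re\sum_p \frac{f(p)p^{-it}}{p^{1+\sigma}}$ with the distance at scale $X$ for all $\sigma\ge 1/\log X$. The sharp lower bound
$$\sum_{p}\frac{1-\Re(f(p)p^{-it})}{p^{1+\sigma}}\ge \mathbb{D}(f,n^{it};X)^2-\log(\sigma\log X)-O(1)$$
(with coefficient $1$, not $2$, because $1-\Re(\cdot)\le 2$ only matters against $\sum 1/p$ weights appropriately) gives $\max_t|F|\ll \sigma^{-1}\min(1,\sigma\log X\, e^{-M})$. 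Integrating then yields $(1+M)e^{-M}$.
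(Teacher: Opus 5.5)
Citing the literature is exactly what the paper does: it quotes Corollary~1 of Granville--Soundararajan. Any standard form of the Hal\'asz--Montgomery--Tenenbaum bound would also work, since $\abs{F(1+\frac{1}{\log X}+it)}\ll \log X\exp(-\mathbb{D}(f,n\mapsto n^{it};X)^2)$, and the extra term $\frac{\log\log X}{\log X}$ only weakens the statement.

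Your self-contained sketch, however, has a genuine gap. The ``coefficient $1$'' inequality
\[
\sum_p\frac{1-\Re(f(p)p^{-it})}{p^{1+\sigma}}\ \ge\ \mathbb{D}(f,n\mapsto n^{it};X)^2-\log(\sigma\log X)-O(1)
\]
is false for individual $t$. Take $y=e^{1/\sigma}$, $t=0$, $f(p)=1$ for $p\le y$ and $f(p)=-1$ for $p>y$. The left side is $2\sum_{p>y}p^{-1-\sigma}=O(1)$, while the right side is $2\log(\sigma\log X)-\log(\sigma\log X)-O(1)\to\infty$. So the coefficient $2$ you first obtained is sharp pointwise. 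Any argument that compares $\abs{F(1+\sigma+it)}$ with $\mathbb{D}(f,n\mapsto n^{it};X)$ at the same $t$ is therefore stuck at $e^{-M/2}$. Incidentally, in your crude integration the range $\sigma\le e^{M/2}/\log X$ contributes $\int_{1/\log X}^{e^{M/2}/\log X}(\log X)e^{-M}\,d\sigma\asymp e^{-M/2}$, not $Me^{-M}$.

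The bound you need, $\max_{|t|\le U/2}\abs{F(1+\sigma+it)}\ll\min(\sigma^{-1},e^{-M}\log X)$ up to an error, is nevertheless true. It holds only for the maximum over $t$, because the maximizing frequency moves with $\sigma$. In the example above, at $t=1/\log y$ one has $\mathbb{D}(f,n\mapsto n^{it};X)^2=\log\frac{\log X}{\log y}+O(1)$, about half its value at $t=0$.

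What is missing is the Montgomery--Tenenbaum averaging step. Put $\sigma_*=\frac{1}{\log X}$ and $a=\sigma-\sigma_*$. The identity $n^{-a}=\frac{1}{\pi}\int_{\R}\frac{a}{a^2+u^2}n^{-iu}\,du$ gives
\[
F(1+\sigma+it)=\frac{1}{\pi}\int_{\R}\frac{a}{a^2+u^2}F(1+\sigma_*+i(t+u))\,du .
\]
Bound the range $|u|>U/2$ using $\abs{F(1+\sigma_*+iu')}\le\zeta(1+\sigma_*)\ll\log X$. This yields
\[
\max_{|t|\le U/2}\abs{F(1+\sigma+it)}\le\max_{|t|\le U}\abs{F(1+\sigma_*+it)}+O\Bigl(\frac{\sigma\log X}{U}\Bigr)\ll e^{-M(f;X,U)}\log X+\frac{\sigma\log X}{U}.
\]
Combine this with $\abs{F(1+\sigma+it)}\le\zeta(1+\sigma)\ll\sigma^{-1}$ in your integral inequality, taking $T=U/2$. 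Splitting at $\sigma_1=e^{M}/\log X$ gives $Me^{-M}+e^{-M}$, and the error term integrates to $O(1/U)$. That is the route that actually delivers $(1+M)e^{-M}+\frac{1}{U}$.
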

In our notation, we take $U=X$ and so the $U$ is usually omitted from the $M(f;X)$ notation, although the additional flexibility will be useful in the proofs presented below.
\begin{proof}
    See Corollary 1 of \cite{Granville_Soundararajan_2003}. 
\end{proof}

\begin{remark}{\label{rmk:s_plus}}
    In this setting (and in the remainder of \cref{sec:4}), we will \emph{start} with an unrestricted short average. We will remove integers with atypical factorizations on the frequency side, using \cref{lem:add_back_sieve_error} and \cref{lem:abse_2}. This changes very little, it just makes the bookkeeping at the end of the argument slightly easier.
    The choice of $R$ in \eqref{eq:PQ_choices} is less consequential as well, so we fix $R=2$. 
\end{remark}

The new ideas again relate to a sharper treatment of the last range $[P,Q]$ and the corresponding Dirichlet polynomials $Q_{v}(s)$ and $R_{v}(s)$. The proof of \cref{thm:smooth_f_thm} will require two lemmas, a variant of Hal\'asz's theorem with the Ramar\'e weights, and then a local version of the same.

We will need to apply an $L^{\infty}$ bound coming from Hal\'asz theorem to $R_{v}(s)$ in order to obtain some pointwise savings.
If we used Hal\'asz's theorem as in the proof of Lemma 3 in the Matom\"aki--Radziwi\l{}\l{} paper, we get the following bound: if
$$R(s) = \sum_{X\leq n\leq 3X} \frac{f(n)}{n^s}\frac{1}{\omega_{[P,Q]}(n)+1}$$
then for any $t\in [(\log X)^{\frac{1}{16}}, X^A]$
$$\abs{R(1+it)} \ll \frac{\log Q}{\exp(M(f;X))\log P} + \log X \exp\left(-\frac{\log X}{3\log Q}\log\frac{\log X}{\log Q}\right).$$
In \cite{matomaki_radziwill}, this theorem is applied to $R_{v, H}(s)$ with an interval $[P,Q]$ with less separation in between $P$ and $Q$ (and hence $Q$ is quite small).
There, $Q$ has size $\exp\left(\frac{\log X}{\log\log X}\right) = X^{o(1)}$. Now the parameter $Q$ is $X^{1-o(1)}$.
This would mean we end up \emph{paying} $\log X$ rather than saving $\exp(-M(f;X))$.
We modify their result in a manner that retains the main term without incurring this error.

\begin{lemma}[Ramar\'e weighted Hal\'asz inequality]{\label{lem:modified_R_halasz}}
    Let $10 < Y < \frac{X}{2}$. Let $P, Q$ be defined in terms of $X$ as in \eqref{eq:PQ_choices}, again taking $R=2$ in the definition of $P$. 
    Let $f:\N\to \C$ be a 1-bounded multiplicative function, and define
    $$R(s) = \sum_{Y\leq n \leq 2Y} \frac{f(n)}{n^s} \frac{1}{\omega_{[P, Q]}(n)+1}.$$
    Write $M(f;Y) = \min_{|\tau|\leq X}\mathbb{D}(f, n\mapsto n^{i\tau}; Y)^2$. 
    Then for any $t\in \left[0, \frac{X}{2}\right]$, we have
    $$\abs{R(1+it)} \ll (\log\log X)^4 (M(f;Y)+1) \exp(-M(f;Y)) + \frac{\log\log Y}{\log Y}.$$
\end{lemma}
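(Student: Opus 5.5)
We propose to remove the Ramar\'e weight $\frac{1}{\omega_{[P,Q]}(n)+1}$ by splitting $n$ according to the exact value $k=\omega_{[P,Q]}(n)$, and then handle each piece by Hal\'asz's theorem (\cref{lem:halasz_OG}). Since $[P,Q]$ has $\log\log Q-\log\log P \approx \log\log\log X$, a typical $n\le 2Y$ has $\omega_{[P,Q]}(n)$ of size $\ll \log\log\log X$. Moreover, because $P=X^{o(1)}$ with $\log X/\log P\asymp \log\log X/\log\log\log X$, every $n\le 2Y\le X$ has $\omega_{[P,Q]}(n)\le \log X/\log P \ll \log\log X$. So only $k\le K_0:=C\log\log X$ occurs.

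\textbf{Step 1: decompose the weight.} Write $\frac{1}{k+1}=\int_0^1 x^k\,dx$, so that
$$R(1+it)=\int_0^1 \sum_{Y\le n\le 2Y}\frac{f(n)x^{\omega_{[P,Q]}(n)}}{n^{1+it}}\,dx .$$
For each $x\in[0,1]$, the function $f_x(n)=f(n)n^{-it}x^{\omega_{[P,Q]}(n)}$ is $1$-bounded and multiplicative. The task therefore reduces to a pointwise Hal\'asz bound for $\frac1Y\sum_{Y\le n\le 2Y} f_x(n)$, uniformly in $x$ and $t$; partial summation from $n^{-1}$ to the normalized sum is routine.

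\textbf{Step 2: apply Hal\'asz.} Apply \cref{lem:halasz_OG} to $f_x$ at scale $Y$ (and $2Y$), with $U$ chosen so that $|t+\tau|\le X$ is covered. This is where $t\le X/2$ and the definition of $M(f;Y)$ with $|\tau|\le X$ are used. Compared with $f$, the twist changes the pretentious distance only on primes in $[P,Q]$:
$$\mathbb{D}(f_x,n^{i\tau};Y)^2 \ge \mathbb{D}(f,n^{i(\tau+t)};Y)^2-\sum_{P\le p\le Q}\frac{1-x}{p}\ \ge\ M(f;Y)-\log\frac{\log Q}{\log P}-O(1),$$
since $|1-\Re(a)|$ changes by at most $1-x$ when $a$ is scaled by $x$. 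As $\log Q/\log P\asymp \log\log X/\log\log\log X$, we get $\exp(-M(f_x;Y))\ll \frac{\log\log X}{\log\log\log X}\exp(-M(f;Y))$. The function $(M+1)e^{-M}$ is decreasing for $M\ge 0$, so the shift costs only a factor $(\log\log X)^{O(1)}$, plausibly $(\log\log X)^2$ after accounting for the $M+1$ prefactor. This already gives the shape $(\log\log X)^{O(1)}(M(f;Y)+1)e^{-M(f;Y)}+\frac1U+\frac{\log\log Y}{\log Y}$.

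\textbf{Step 3 (main obstacle): avoid an excessive loss.} A naive shift of the distance by $\log(\log Q/\log P)$ may be wasteful. If the crude bound loses more than $(\log\log X)^4$, the plan is to use the refined form of Hal\'asz (Granville--Soundararajan's $\int$ over $\sigma$ of $\max_{|\tau|}|F(\sigma+i\tau)/\zeta(\sigma)|$). For $\sigma-1\ge 1/\log P$ the $[P,Q]$ Euler factors differ from those of $f$ only by a bounded amount. For $\sigma-1<1/\log P$ we bound the ratio of the Euler products over $[P,Q]$ by $\prod_{P\le p\le Q}(1+\frac1p)^{O(1)}\ll(\log Q/\log P)^{O(1)}$. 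Summing this over the $(\log\log X)$-many dyadic ranges of $\sigma$ should produce at most $(\log\log X)^4$. Finally, the $1/U$ term is absorbed since $U\gg Y$-scale choices make it $\ll \frac{\log\log Y}{\log Y}$, and integrating over $x\in[0,1]$ preserves the bound, giving the claimed estimate uniformly in $t\in[0,X/2]$. The delicate bookkeeping is keeping the $\log X$-type losses, which would arise from $Q=X^{1-o(1)}$ in the Matom\"aki--Radziwi\l{}\l{} version, down to powers of $\log\log X$; the $x^{\omega}$ twist is designed to achieve exactly this, since it never forces $n$ to factor through a prime near $Q$.
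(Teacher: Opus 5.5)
Your proposal is correct and follows the paper's proof: the representation $\frac{1}{k+1}=\int_0^1 x^k\,dx$, Hal\'asz applied to the multiplicative twist $f(n)n^{-it}x^{\omega_{[P,Q]}(n)}$ with frequencies shifted into $|\tau|\le X$ using $t\le X/2$, and the observation that the distance changes only on primes in $[P,Q]$, by at most $\sum_{P\le p\le Q}1/p$. Your Step 2 already loses only a factor $O(\log\log X)$, which is sharper than the paper's crude $(\log\log X)^4$, so the contingency in Step 3 and the preliminary bound on $\omega_{[P,Q]}(n)$ are unnecessary.
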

Note that in the definition of $M(f;Y)$, we take the minimum over the larger window $|\tau|\leq X$, related to the ambient scale $X$ of the problem. This distance is certainly at most the minimum over the shorter scale $|t|\leq Y$. 
\begin{proof}
    Observe that $\omega_{[P,Q]}(n)+1$ is a positive integer.
    Hence, 
    $$\frac{1}{\omega_{[P,Q]}(n)+1} = \int_0^1 y^{\omega_{[P,Q]}(n)}dy.$$
    Crucially the integrand (for any fixed $y$) \emph{is} multiplicative.
    Now we exchange the order of summation and integration to see
    $$R(s) = \int_0^1 \sum_{Y\leq n \leq 2Y} \frac{f(n) y^{\omega_{[P,Q]}(n)}}{n^s} dy := \int_0^1 F_y(s) dy,$$
    where $F_y(s)$ for each $y$ is a Dirichlet polynomial corresponding to a multiplicative function.
    We now pull out a supremum over $y$ and let $s=1+it$:
    $$\abs{R(1+it)} \ll \sup_{y\in [0,1]}\abs{F_y(1+it)}.$$
    We can now freely apply \cref{lem:halasz_OG}, although now the multiplicative function will be different. Let $g_{y,t}(n) = f(n)y^{\omega_{[P,Q]}(n)}n^{-it}$. By Hal\'asz theorem and partial summation for the $\frac{1}{n}$ weights,
    \begin{align}{\label{eq:halasz_smaller}}
        &\sup_{y\in [0,1]}\abs{F_y(1+it)}\\
        &\ll \left(\min_{|\tau'|\leq Y} \mathbb{D}(g_{y,t}, n\mapsto n^{i\tau'};Y)^2 +1\right)\exp\left(-\min_{|\tau'|\leq Y} \mathbb{D}(g_{y,t}, n\mapsto n^{i\tau'};Y)^2\right) + \frac{\log\log Y}{\log Y} \nonumber\\
        &\ll \underbrace{\left(\min_{|\tau| < X} \mathbb{D}(f y^{\omega_{[P, Q]}(\cdot)}, n\mapsto n^{i\tau}; Y)^2+1\right)}_{M_y(f;Y)+1} \exp\left(- \min_{|\tau| < X} \mathbb{D}(f y^{\omega_{[P, Q]}(\cdot)}, n\mapsto n^{i\tau}; Y)^2\right) + \frac{\log\log Y}{\log Y}. \nonumber
    \end{align}
    Here, we used the fact that $0\leq t\leq \frac{X}{2}$, and so $|\tau| := |\tau' + t| \leq X$.
    We prove that the difference in the corresponding $M$ coming from pretentious distance is negligible, bounded by at most $O(\log\log\log X)$.
    The idea is that for prime $p$,
    $$y^{\omega_{[P,Q]}(p^k)} =\begin{cases}
        y, &p\in [P,Q]\\
        1, &\text{otherwise}
    \end{cases}.$$
    Hence for any multiplicative function $g(n)$ (in particular, an arbitrary Archimedean character) and any scale $T_0$, the sum corresponding to $\mathbb{D}(f, g;T_0)^2$ differs from $\mathbb{D}(f y^{\omega_{[P,Q]}},g ;T_0)^2$ only on primes in $[P,Q]$.
    The difference in numerators is bounded, hence the difference is certainly at most 
    $$\leq 2\sum_{P\leq p\leq Q}\frac{1}{p} \leq 2\log\log Q - 2\log\log P \leq 4\log\log\log X,$$
    for sufficiently large $X$. Hence $\exp(-M_y(f;Y)) \ll (\log\log X)^4 \exp(-M(f;Y))$ and the stated upper bound holds.    
\end{proof}

\begin{remark}
    To verify that \cref{lem:modified_R_halasz} is suitable for the second integral on the right-hand side of \cref{prop:parseval_1}, observe that $|\tau| \leq 2T + Y \leq X$, hence the distance is still controlled by frequencies up to $X$. 
\end{remark}

Secondly, we would like a ``local'' Hal\'asz theorem over a shorter range of frequencies. Ultimately, this will be applied when the function is suitably $n^{it}$-pretentious, hence we need only save a small power of logarithm in our bound.

\begin{lemma}{\label{lem:local_halasz}}
    Fix $0 < \delta < 1$ and let $X^{\delta} < Y < \frac{X}{2}$. Let $P, Q$ be defined in terms of $X$ as in \eqref{eq:PQ_choices}. 
    Let $f:\N\to \C$ be a 1-bounded multiplicative function, and define the Dirichlet polynomial $R(s)$ and the quantity $M(f;Y)$ as in \cref{lem:modified_R_halasz}.
    Let $t_1$ be a frequency that minimizes $M(f;Y)$ (again over the ambient scale $\abs{\tau}\leq X$).
    Then for $|t|\leq X$, whenever $|t-t_1| > \frac{5}{6}(\log X)^{\frac{1}{16}}$, we have
    $$\abs{R(1+it)} \ll_{\delta} (\log X)^{-\frac{1}{20}}.$$
\end{lemma}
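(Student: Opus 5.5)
The plan is to reuse the linearization from the proof of \cref{lem:modified_R_halasz} and then replace the global Hal\'asz bound by a version that only sees frequencies near $t$. First write
$$\frac{1}{\omega_{[P,Q]}(n)+1}=\int_0^1 y^{\omega_{[P,Q]}(n)}\,dy.$$
This gives $\abs{R(1+it)}\le \sup_{y\in[0,1]}\abs{F_y(1+it)}$, where $F_y$ has the multiplicative coefficients $g_{y}(n)=f(n)y^{\omega_{[P,Q]}(n)}$. After twisting by $n^{-it}$ and using partial summation over the dyadic range $[Y,2Y]$, it suffices to bound $\frac{1}{Y'}\abs{\sum_{n\le Y'} g_y(n)n^{-it}}$ uniformly for $Y'\in[Y,2Y]$.

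For this we apply \cref{lem:halasz_OG} to the function $g_{y,t}(n)=g_y(n)n^{-it}$ at scale $Y'$, with the cutoff $U=(\log X)^{1/16}/6$. The terms $\frac1U$ and $\frac{\log\log Y'}{\log Y'}$ are then $\ll (\log X)^{-1/16}$ and $\ll_\delta (\log X)^{-1+o(1)}$, using $Y>X^\delta$. It remains to show that for every $\abs{\tau'}\le U$,
$$\mathbb{D}\big(g_{y,t},\, n^{i\tau'};\, Y'\big)^2 \;\ge\; \tfrac{1}{19}\log\log X - O_\delta(\log\log\log X),$$
since then $(M+1)e^{-M}\ll(\log X)^{-1/20}$. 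Removing the weight $y^{\omega_{[P,Q]}}$ costs at most $4\log\log\log X$, exactly as in the proof of \cref{lem:modified_R_halasz}. Passing from scale $Y'$ to scale $X$ costs $\sum_{Y'<p\le X}1/p\ll \log(1/\delta)$. So it is enough to lower-bound $\mathbb{D}(f,n^{i(t+\tau')};X)^2$.

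The key step is a triangle-inequality argument, and this is the main obstacle. Set $\tau=t+\tau'$. Then $\abs{\tau-t_1}\ge \frac56(\log X)^{1/16}-\frac16(\log X)^{1/16}\ge \frac23(\log X)^{1/16}$, and $\abs{\tau}\le X+U$, which is harmless after a trivial adjustment. By the pretentious triangle inequality,
$$\mathbb{D}(f,n^{i\tau};Y)+\mathbb{D}(f,n^{it_1};Y)\ge \mathbb{D}(n^{it_1},n^{i\tau};Y).$$
By the Vinogradov--Korobov zero-free region, for $1\ll\abs{\tau-t_1}\ll X$ we have
$$\mathbb{D}(n^{it_1},n^{i\tau};Y)^2\ge \Big(\tfrac{1}{3}-o(1)\Big)\log\log Y-O(\log\log(2+\abs{\tau-t_1})).$$
For $\abs{\tau-t_1}\ge (\log X)^{1/16}$ one has the standard lower bound $\log\log Y-\log\log(\abs{\tau-t_1})$ for small separations, and $\gg \log\log Y$ in general, via $\sum_{p\le Y}\cos(u\log p)/p\le \log(1/\abs{u})+O(1)$ for $\abs{u}\ge 1/\log Y$ and $\log\log Y$-type bounds from $\zeta(1+1/\log Y+iu)$.

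Since $t_1$ minimizes the distance, $\mathbb{D}(f,n^{i\tau};Y)\ge \mathbb{D}(f,n^{it_1};Y)$. Therefore
$$\mathbb{D}(f,n^{i\tau};Y)\ge \tfrac12\,\mathbb{D}(n^{it_1},n^{i\tau};Y),$$
which gives $\mathbb{D}(f,n^{i\tau};Y)^2\ge(\frac{1}{12}-o(1))\log\log X$. This exceeds the $\frac1{19}$ needed, up to the $O(\log\log\log X)$ losses, and yields the claimed bound $\ll_\delta(\log X)^{-1/20}$.

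The delicate points are the constants. First, the lower bound for $\mathbb{D}(1,n^{iu};Y)^2$ with $\abs{u}$ as large as $X$ must really give a fixed fraction of $\log\log Y$ (I would quote the standard bound $\ge \frac13\log\log Y - O(1)$ for $1\le\abs{u}\le Y^{O(1)}$). Second, $\frac1U$ has to be controlled; a smaller $U$ would be chosen if needed.
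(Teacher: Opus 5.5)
Apart from the boundary issue below, your argument is correct and follows the paper's skeleton. You linearize the Ramar\'e weight via $\int_0^1 y^{\omega_{[P,Q]}(n)}\,dy$ and apply Hal\'asz with a local window $U\asymp(\log X)^{1/16}$, so that $1/U\ll(\log X)^{-1/16}$ is the dominant term. You then absorb the factor $y^{\omega_{[P,Q]}}$ and the change of scale at a cost of $O_\delta(\log\log\log X)$.

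The one real difference is the lower bound for the distance away from $t_1$. The paper quotes Lemma A.4 of Matom\"aki--Radziwi\l{}\l{}--Tao, with constant $\frac13-\frac{2}{3\pi}\approx 0.12$. You instead combine the pretentious triangle inequality with minimality of $t_1$ to get $\mathbb{D}(f,n\mapsto n^{i\tau};Y)^2\ge\frac14\,\mathbb{D}(1,n\mapsto n^{i(\tau-t_1)};Y)^2\ge\frac1{12}\log\log Y-O_\delta(1)$. The constant is worse but ample, since $(\log X)^{-1/12+o(1)}$ is still smaller than $1/U$. In exchange your version is self-contained and shows exactly where $Y>X^\delta$ enters, namely through $|\tau-t_1|\le 2X\le 2Y^{1/\delta}$. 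It also shows that the distance bound only needs $|\tau-t_1|\ge 1$, so the separation $\frac56(\log X)^{1/16}$ serves only to fit the Hal\'asz window.

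There are two corrections. First, the display $(\frac13-o(1))\log\log Y-O(\log\log(2+|\tau-t_1|))$ is misstated, since the subtracted term can be as large as $\log\log X$. What the Vinogradov--Korobov bound gives is
$\mathbb{D}(1,n\mapsto n^{iu};Y)^2=\log\log Y-\log\abs{\zeta(1+\frac{1}{\log Y}+iu)}+O(1)\ge\log\log Y-\frac23\log\log|u|-O(1)$ for $|u|\ge 3$. This yields the bound $\frac13\log\log Y-O_\delta(1)$ you end up quoting.

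Second, the range $X<|\tau|\le X+U$ is not handled by a trivial adjustment. Minimality of $t_1$ says nothing there, and the statement actually fails near $|t|=X$. Take $f(n)=n^{i(X+10^{-3})}$, $Y=X/4$ and $t=X$.
\begin{itemize}
\item Then $f(n)n^{-iX}$ is essentially constant on $[Y,2Y]$, and the Ramar\'e weight is at least $\gg\frac{\log\log\log X}{\log\log X}$, so $\abs{R(1+iX)}\gg\frac{\log\log\log X}{\log\log X}$.
\item Meanwhile $t_1=X+10^{-3}-v_{\min}$, where $v_{\min}$ maximizes $\abs{\zeta(1+\frac1{\log Y}+iv)}$ over $v\le 2X$, up to a bounded factor.
\item By known extreme-value results that maximum is $\gg\log\log X$. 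For $v\le(\log X)^{1/16}$, however, $\abs{\zeta}\ll(\log\log X)^{2/3}$.
\item Hence $v_{\min}>(\log X)^{1/16}$ and $|t-t_1|>\frac56(\log X)^{1/16}$, yet $\abs{R(1+iX)}$ is far larger than $(\log X)^{-1/20}$.
\end{itemize}
The fix is to assume $|t|\le X-U$, which covers every use of the lemma in the paper. The paper's own proof needs the same restriction, unstated, when it applies the MRT bound at $t+u$.
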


\begin{proof}
Let $U = \frac{(\log X)^\frac{1}{16}}{3}$. We apply the same argument as in \cref{lem:modified_R_halasz} to deal with the Ramar\'e weight $\frac{1}{\omega_{[P,Q](n)}+1}$, the only difference is that in the application of the usual Hal\'asz theorem, we only take the minimum over frequencies $|u|\leq U$. Let $g_y(n) = f(n) y^{\omega_{[P,Q]}(n)}$. By the same application of Hal\'asz theorem as before, we have

    $$\abs{R(1+it)} \ll_{\delta} \left(\min_{|u|\leq U} \mathbb{D}(g_y, n\mapsto n^{i(t+u)};X^{\delta})^2+1\right) \exp\left(-\mathbb{D}(g_y, n\mapsto n^{i(t+u)};X^{\delta})^2\right) + \frac{1}{U} + \frac{\log\log X}{\log X}.$$

We can replace the pretentious distance up to $X^{\delta}$ by the distance up to $X$ at the cost of $O_{\delta}(1)$. Similarly, as before, we can account for the $y^{\omega_{[P,Q]}(n)}$ factor in the squared pretentious distance at the cost of $O(\log\log\log X)$. In Lemma A.4 of \cite{average_chowla}, Matom\"aki--Radziwi\l{}\l{}--Tao prove that when $|t-t_1| > (\log Y)^{\frac{1}{16}}/2$, we have 
$$\mathbb{D}(f, n\mapsto n^{it};Y)^2 \geq \left(\frac{1}{3}-\frac{2}{3\pi}-\epsilon_0\right)\log \log Y$$
for any fixed $\epsilon_0 > 0$. 
Because $X^{\delta} < Y < X$, this implies
$$\mathbb{D}(f, n\mapsto n^{it};X)^2 \geq \left(\frac{1}{3}-\frac{2}{3\pi}-\epsilon_0+o_{\delta}(1)\right)\log \log X.$$
In particular, we have that this pretentious distance is at least $\frac{1}{10}\log\log X$ for $X$ sufficiently large in terms of $\delta$. By the triangle inequality and by choice of $U$, we have $|t+u -t_1| \geq |t-t_1| - U > (\log X)^{\frac{1}{16}}/2$, and therefore upon applying \cref{lem:halasz_OG} we have
$$\abs{R(1+it)} \ll_{\delta} (\log X)^{-\frac{1}{10}+o(1)} + (\log X)^{-\frac{1}{16}} + \frac{\log\log X}{\log X}.$$

Hence, the desired bound holds.

\end{proof}

With these two pointwise bounds in hand, we can proceed with the proof.

\begin{proof}[Proof of \cref{thm:smooth_f_thm}.]
Write
$$F(s) = \sum_{\substack{X\leq n\leq 3X}} f(n) n^{-s}.$$
 We have the following Parseval bound (see the proof of Theorem A.2 in \cite{average_chowla}):
\begin{equation}{\label{eq:MRT_parseval}}
    \frac{1}{X}\int_X^{2X} \abs{\frac{1}{h} \sum_{\substack{x\leq n\leq x+h}} f(n)}^2 dx \ll \int_{0}^{X/h}\abs{F(1+it)}^2 dt + \max_{T \in \left(\frac{X}{h}, \frac{X}{8}\right]}\frac{X/h}{T}\int_{T}^{2T} \abs{F(1+it)}^2 dt + \frac{1}{h},
\end{equation}
where the same comment as in \cref{rmk:T_range} justifies the truncation of the $T$ range in the second integral.
Note here that we do not separate the contribution of small frequencies $t\leq T_0$. 

We will again focus on the first integral, as the treatment of the second integral (with respect to how we compute the first integral) is completely analogous to the proof of \cref{thm:liouville_bound}.

We will prove that, for $T = \frac{X}{h}$, we have
\begin{equation}{\label{eq:relevant_integral}}
    \int_0^T \abs{F(1+it)}^2 dt \ll_{\delta} \exp(-(2-o(1))M(f;X)) + \frac{(\log\log h)^2}{(\log h)^2} + \frac{1}{(\log X)^{2-o(1)}}.
\end{equation}

The first thing we will do is separate out the contributions of integers with atypical factorizations. Observe
$$\int_0^{X/h} \abs{F(1+it)}^2 dt \ll \int_0^{X/h} \abs{\sum_{\substack{X\leq n\leq 3X \\n\in \mathcal{S}}} f(n) n^{-1-it}}^2 dt + \int_0^{X/h} \abs{\sum_{\substack{X\leq n\leq 3X \\n\notin \mathcal{S}}} f(n) n^{-1-it}}^2 dt,$$
and by \cref{lem:add_back_sieve_error} we have that the second integral can be bounded by
$$\ll \frac{(\log \log h)^2}{(\log h)^2} + \frac{(\log\log X)^4}{(\log X)^2},$$
which is admissible.

We will now consider two cases; either $M(f;X) > \frac{1}{32}\log\log X$ (so $f$ is ``strongly non-pretentious''), or $M(f;X) \leq \frac{1}{32} \log\log X$.
In the second case, we will need to apply the method in \cite{average_chowla}, although we modify it to get better savings in the pretentious distance term.

\textbf{Case I: }Suppose $f$ is strongly non-pretentious. We will proceed with the analysis of \eqref{eq:relevant_integral} in 3 cases, depending on the length of the prime supported Dirichlet polynomial when we apply Ramar\'e's identity. In all of the following subcases, any time that we apply any $L^2$ estimate for Dirichlet polynomials, we may apply these bounds on the 1-line due to dyadic support of the polynomials and partial summation.

As before, we partition $[0,T] = \mathcal{T}_1 \cup \dots \cup \mathcal{T}_J \cup \mathcal{U}$, and (in light of \cref{eq:same_integrals}) we may bound the integrals over $\mathcal{T}_1, \dots, \mathcal{T}_J$, recovering the bounds given in \eqref{eq:integral_T_1} and \eqref{eq:integral_T_j}).

This time, when we are left with the integral over $\mathcal{U}$, we will add back the integers $n\notin \mathcal{S}$ using \cref{lem:add_back_sieve_error}.
We now separate the integers with no prime factor in the range $[P,Q]$.
We have
\begin{align*}
\int_{\mathcal{U}}\abs{\sum_{\substack{X\leq n\leq 3X}} f(n)n^{-1-it}}^2 dt &\ll \int_{\mathcal{U}}\abs{\sum_{\substack{X\leq n\leq 3X \\ \exists p\in [P,Q], p|n}} f(n)n^{-1-it}}^2 dt +  \int_{\mathcal{U}}\abs{\sum_{\substack{X\leq n\leq 3X \\ p|n\implies p\notin [P,Q]}} f(n)n^{-1-it}}^2 dt\\
    &\ll \int_{\mathcal{U}}\abs{\sum_{\substack{X\leq n\leq 3X \\ \exists p\in [P,Q], p|n}} f(n)n^{-1-it}}^2 dt + (\log X)^{-2+o(1)}.
\end{align*}
In the second line, we have used the mean value theorem along with \eqref{lem:last_range_sieve_error} with $R=2$. With $Q$ as in \cref{eq:PQ_choices}, if $p|n\implies p\notin [P,Q]$ then $n$ is $P$-smooth.
The density of such integers is $\ll (\log X)^{-2+o(1)}$ with our choice of $R=2$. 

So now we consider the first integral over $\mathcal{U}$.
We apply \cref{lem:perron_ramare} with the interval parameters $[P,Q]$ and $D = (\log X)^2$, again crudely bounding the sum over $v\in [\log P, \log Q]$ by the length of the integral times some choice of $v$ maximizing the expression:
\begin{align}
\int_{\mathcal{U}}\abs{\sum_{\substack{X\leq n\leq 3X \\ \exists p\in [P,Q], p|n}} \frac{f(n)}{n^{1+it}}}^2dt &\ll (\log\log X)(\log Q)^2 \int_{-(\log X)^2}^{(\log X)^2} \abs{\frac{3^{iu}-1}{iu}} \int_{\mathcal{U}+u} \abs{Q_{v}(1+it) R_{v}(1+it)}^2 dtdu + \frac{1}{(\log X)^2} \nonumber\\
&\ll (\log\log X) (\log Q)^2 \int_{-(\log X)^2}^{(\log X)^2} \abs{\frac{3^{iu}-1}{iu}} \sum_{t\in \mathcal{E}_u} \abs{Q_{v}(1+it)R_{v}(1+it)}^2 du + \frac{1}{(\log X)^2}, \label{eq:discrete_last_ramare}
\end{align}
where $Q_{v}(s)$ and $R_{v}(s)$ have coefficients $f(p)$ and $\frac{f(m)}{\omega_{[P,Q]}(m)+1}$ respectively, and where in the second line we have passed to a well-spaced subset $\mathcal{E}_u\subseteq\mathcal{U}+u$.

We note two important observations:
\begin{enumerate}
    \item We have added back the integers $n\notin \mathcal{S}$, so after applying Ramar\'e's identity on the last interval, the polynomial $R_{v}(s)$ is exactly the sum of a multiplicative function over an interval, and there are no extra conditions on $n$.
    \item The prime-supported Dirichlet polynomial $Q_{v}(s)$ has length in $\left[P, X^{1-\delta}\right]$ rather than $[P,Q]$ due to the imposed support condition on $f$.
    Consequently, the polynomial $R_{v}(s)$ is quite long, of length \emph{at least} $X^{\delta}$.
\end{enumerate}
Both of these facts will be important when we apply Hal\'asz theorem. 
We will consider cases based on the length of $Q_v(s)$. Note importantly that for the remainder of the argument, we will bound the sum over $t\in \mathcal{E}_u$ uniformly in $u$, and at the end integrate the $u$ kernel at the cost of another (admissible) factor of $\log\log X$.

\textbf{Case Ia: } Suppose that $P < Q_{v} < X^{\frac{4}{10}}$. We split the set $\mathcal{E}_u$ into
$$\mathcal{E}_{u, \operatorname{small}} = \left\{t\in \mathcal{E}_u: \abs{Q_{v}(1+it)} \leq (\log X)^{-100}\right\},$$
$$\mathcal{E}_{u, \operatorname{large}} = \left\{t\in \mathcal{E}_u: \abs{Q_{v}(1+it)} > (\log X)^{-100}\right\}.$$
On the small set, we can bound the right-hand side of \eqref{eq:discrete_last_ramare} by
$$\ll (\log X)^{-200+2+2+1+o(1)} \sum_{t\in\mathcal{E}_{u, \operatorname{small}}}\abs{R_{v}(1+it)}^2,$$
and because $Q_{v} < X^{\frac{4}{10}}$, we have $R_{v} \gg X^{\frac{6}{10}}$ and hence we may apply the usual Hal\'asz--Montgomery inequality (\cref{lem:hm}) to upper bound the above by $\ll (\log X)^{-93}$, which is admissible. Note here that $\abs{\mathcal{U}+u} = \abs{\mathcal{U}}$, hence we have the same upper bound on $\abs{\mathcal{E}_{u, \operatorname{small}}} \leq \abs{\mathcal{E}_u} \ll X^{\varepsilon}$.

By \cref{lem:large_values} and the lower bound $Q_{v} > P = \exp\left(\frac{(\log X)(\log\log\log X}{\log\log X}\right)$, the well-spaced exceptional set has size at most
\begin{align*}
\abs{\mathcal{E}_{u, \operatorname{large}}} &\ll X^{2\frac{\log\log X}{\log P}} (\log X)^{200}\exp\left(2\frac{\log T}{\log P}\log\log T\right)\\
&\ll \exp\left((\log\log X)^{2+o(1)}\right).
\end{align*}
So, we apply \cref{lem:modified_R_halasz} to $R_{v}(1+it)$ and apply \cref{lem:prime_hm_2} to the remaining sum over $t\in \mathcal{E}_{u, \operatorname{large}}$ to get
\begin{align*}
    (\log\log X)^2(\log Q)^2&\sum_{t\in\mathcal{E}_{u, \operatorname{large}}}\abs{Q_{v}(1+it)R_{v}(1+it)}^2 \\
    &\ll (\log\log X)^2 (\log Q)^2 \left(\frac{(\log\log X)^8 M(f;X)^2}{\exp(2M(f;X))} + \frac{(\log\log X)^2}{(\log X)^2}\right)\sum_{t\in\mathcal{E}_{u, \operatorname{large}}} \abs{Q_{v}(1+it)}^2 \\
    &\ll (\log\log X)^2\frac{(\log Q)^2}{(\log P)^2} \left(\frac{(\log\log X)^8 M(f;X)^2}{\exp(2M(f;X))} + \frac{(\log\log X)^2}{(\log X)^2}\right).
\end{align*}

Note that in going from the second to third line, the large values set is sparse enough that the off-diagonal term in \cref{lem:prime_hm_2} is smaller than the main term of $\frac{e^v}{\log P}$. We have also used that $P \leq e^{v} \leq Q$.  In total, our savings is $\frac{1}{(\log P)^2}$, where we get one factor of $\frac{1}{\log P}$ from \cref{lem:prime_hm_2}, and the other from applying the prime number theorem to the $L^2$ norm in the Hal\'asz--Montgomery bound.

By the Case I hypothesis, $f$ is strongly non-pretentious, and so any $(\log\log X)^{O(1)}$ losses can be absorbed into the $o(1)$ in the exponential of $M(f;X)$, and we can bound the right-hand side of \eqref{eq:discrete_last_ramare} by $\ll \exp(-(2-o(1)) M(f;X)) + (\log X)^{-2+o(1)}$.

\textbf{Case Ib: }Suppose that $X^{\frac{4}{10}} < Q_{v} < X^{\frac{6}{10}}$. We split $\mathcal{E}_u$ into small and large sets in the same way, the only difference will be the treatment of the small set (the treatment of the large set will again be exactly the same, as all we used was that $P < Q_{v}$ and $R_v > X^{\delta}$ for $\delta$ positive).
On the small set, we bound 
\begin{align*}
    (\log\log X)^2 (\log Q)^2\sum_{t\in\mathcal{E}_{u, \operatorname{small}}}\abs{Q_{v}(1+it)R_{v}(1+it)}^2 &\ll (\log X)^{-94} \sum_{t\in\mathcal{E}_{u, \operatorname{small}}}\abs{R_{v}(1+it)}^2 \\
    &\ll (\log X)^{-93},
\end{align*}
where in the second line we have used \cref{lem:exp_pair_hm} rather than \cref{lem:hm}. Here all we have asked is that $Q_{v} < X^{\frac{6}{10}}$, so $R_{v} \gg X^{\frac{4}{10}}$, hence ordinary Hal\'asz--Montgomery would be insufficient. But, because $\frac{1}{6}+\frac{2}{10} +\varepsilon < \frac{4}{10}$ for the choice of $\varepsilon$, the polynomial $R_{v}(s)$ is still long enough to grant us savings. Again, as the procedure for $\mathcal{E}_{u, \operatorname{large}}$ is \emph{exactly} the same as in Case Ia, we give the same bound for the right-hand side of \eqref{eq:discrete_last_ramare}.

\textbf{Case Ic: } Lastly suppose $X^{\frac{6}{10}} < Q_{v} < X^{1-\delta}$ (if this region is empty then we need not consider this case). We will apply \cref{lem:modified_R_halasz} to $R_{v}(s)$, and importantly because it has length at least $X^\delta$, we know that $\exp(-(2-o(1))M(f;R_v)) \asymp_{\delta} \exp(-(2-o(1))M(f;X))$.
To see this, observe that $M(f;X)$ and $M(f;X^{\delta})$ differ by at most $O\left(\log\left(1/\delta\right)\right)$, which (as $\delta$ was fixed) does not cost us anything when we take exponentials.
Hence, the right-hand side of \eqref{eq:discrete_last_ramare} is 
$$\ll_{\delta} (\log\log X)^2 (\log Q)^2 \left(\frac{(\log\log X)^8 M(f;X)^2}{\exp(2M(f;X))} + \frac{(\log\log X)^2}{(\log X)^2}\right)\sum_{t\in \mathcal{E}_u} \abs{Q_{v}(1+it)}^2. $$

On the right-hand side, we have the $L^2$ norm of a long Dirichlet polynomial supported on primes, and so by \cref{lem:prime_hm}, we have
\begin{align*}
    \sum_{t\in \mathcal{E}_u}\abs{Q_{v}(1+it)}^2 &\ll \left( \frac{e^{v}}{\log P} + \abs{\mathcal{E}_u} T^{\frac{1}{2}} Q_{v}^{\frac{1}{20}} \log T\right)\sum_{e^{v}\leq p\leq e^{(v+1)}}\frac{1}{p^2}\\
    &\ll \frac{1}{(\log P)^2}.
\end{align*}

Again, because $\frac{3}{100}+\frac{1}{2} + \varepsilon < \frac{6}{10}$, the off-diagonal term is unproblematic, and we get $(\log P)^2$ savings from \cref{lem:prime_hm}, and the prime number theorem.
Hence, all together we may bound 
\begin{align}{\label{eq:4.1_U_integral}}
    (\log \log X)^2 \frac{(\log Q)^2}{(\log P)^2} &\left((\log\log X)^8 M(f;X)^2\exp(-2M(f;X)) + \frac{(\log\log X)^2}{(\log X)^{2}}\right) \nonumber\\
    &\ll \frac{(\log\log X)^{14-o(1)}}{\exp(2M(f;X))} + \frac{(\log\log X)^{6-o(1)}}{(\log X)^{2}}.
\end{align}
Once again because $f$ is strongly non-pretentious, these factors of order $(\log\log X)^C$ are absorbed by $\exp(-(2-o(1)M(f;X))$ or $(\log X)^{-2+o(1)}$, and we get a bound of the desired shape.

Hence, all together, if $f$ is strongly non-pretentious, we conclude that 
\begin{align}{\label{eq:last_error_bound}}
\frac{1}{X}\int_{X}^{2X} \abs{\frac{1}{h}\sum_{\substack{x\leq n\leq x+h}} f(n)}^2 dx \ll \frac{(\log h)^{\frac{1}{3}}}{P_1^{\frac{\varepsilon}{3}-\eta}}+ \exp(-(2-o(1))M(f;X)) + \frac{(\log\log h)^2}{(\log h)^2} + \frac{1}{(\log X)^{2-o(1)}}.
\end{align}

Note that the sieve error that we incurred by adding back $n\notin \mathcal{S}$ dominates the error term of the form $\frac{(\log h)^{\frac{1}{3}}}{P_1^{\frac{\varepsilon}{3} -\eta}}$ from the integrals $\mathcal{T}_1, \dots, \mathcal{T}_J$ (for our choices of $\varepsilon, \eta, P_1, Q_1$). This concludes the analysis of strongly non-pretentious $f$. 

\textbf{Case II: } Now suppose that $f$ satisfies $M(f;X) \leq \frac{1}{32} \log\log X$. This will mean that the term $\exp(-2M(f;X))$ dominates error terms that decay faster than $(\log X)^{-\frac{1}{16}}$ that we will incur in the argument.
We follow the strategy in \cite{average_chowla}, however as stated earlier, the method of adding back the sieve error enables greater savings in the term involving $M(f;X)$. 
Let $t_1$ be a frequency satisfying $|t_1|\leq X$ minimizing  $\mathbb{D}(f, n\mapsto n^{it_1};X)^2$, and define the set of points close to $t_1$ and far from $t_1$:

$$\mathcal{C} = \left\{t\in [0, T]: |t-t_1| \leq (\log X)^{\frac{1}{16}}\right\},$$
$$\mathcal{F} = \left\{t\in [0, T]: |t-t_1| > (\log X)^{\frac{1}{16}}\right\}.$$

On the far set $\mathcal{F}$, we can run a very similar argument as in Case I. 
We will use \cref{lem:add_back_sieve_error} to separate the contribution of integers with atypical factorizations.
We will partition $\mathcal{F} = \mathcal{T}_1 \cup \dots \mathcal{T}_J \cup \mathcal{U}$, and we handle the first $J$ sets exactly as before. 

In the $\mathcal{U}$ range, we again separate integers with atypical factorizations (these are $P$-smooth integers) using the mean value theorem, and we apply \cref{lem:perron_ramare} with $D = (\log X)^{\frac{1}{16}}/100$ at the cost of a $(\log X)^{-\frac{1}{16}}$ in the truncation error term and again a $\log\log X$ prefactor. Now, when we bound the integral over $\mathcal{U}+u$ uniformly in $u$, we can again use the idea 
of considering 3 regimes for the size of $Q_{v}$, except we replace \cref{lem:modified_R_halasz} by \cref{lem:local_halasz} when bounding $\abs{R_{v}(1+it)}$ pointwise. Observe that
$$\abs{t + u - t_1} \geq |t-t_1| - |u| > \frac{99}{100} (\log X)^{\frac{1}{16}},$$
and so even after we shift again in \cref{lem:local_halasz}, all of the frequencies have distance at least $(\log X)^{\frac{1}{16}}$ from $t_1$, and hence the Matom\"aki--Radziwi\l{}\l{}--Tao pretentious distance lower bound holds. The conclusion is that the $\exp(-(2-o(1))M(f;X))$ term in the upper bound is replaced by $(\log X)^{-\frac{1}{10}-o(1)}$. Given that this is better than the Perron truncation error, we have
$$\int_{\mathcal{F}}\abs{\sum_{\substack{X\leq n\leq 3X}} \frac{f(n)}{n^{1+it}}}^2 dt \ll (\log X)^{-\frac{1}{16} + o(1)} + \frac{(\log\log h)^2}{(\log h)^2}.$$

For $t\in \mathcal{C}$, we'd like an upper bound for the following
$$\int_{\mathcal{C}}\abs{\sum_{X\leq n\leq 3X} \frac{f(n)}{n^{1+it}}}^2 dt.$$

Now, the integrand is a (complete) dyadic sum of a multiplicative function.
We apply the following re-centering lemma (Lemma 7.1 of \cite{Granville_Soundararajan_2003}) in order to bound 
$$\sum_{X\leq n\leq 3X}f(n) n^{-it} = \frac{X^{i(t_1-t)}}{1+i(t_1-t)} \sum_{X\leq n \leq 3X} f(n) n^{-it_1} + O\left(X \frac{\log\log X}{\log X} \exp\left(\sum_{p\leq X}\frac{|1-f(p)p^{-it_1}|}{p}\right)\right).$$
Because the sum over $f$ is unrestricted (that is, we don't need to separate any behavior at prime intervals, as in \cite{average_chowla}), we may use the inequality $|1-z| \leq \sqrt{2(1-\Re(z))}$ and Cauchy--Schwarz, 
\begin{align*}
    \sum_{p\leq X} \frac{|1-f(p)p^{-it_1}|}{p} &\leq \left(\sqrt{2\sum_{p\leq X}\frac{1}{p}}\right) \cdot\mathbb{D}(f, n\mapsto n^{it_1} ;X)\\
    &\leq \frac{1}{4}\log\log X. 
\end{align*}
This follows from our hypothesis $M(f;X) \leq \frac{1}{32}\log\log X$ and Mertens' theorem.
Thus, the error from this re-centering is at most $O\left(X (\log X)^{-\frac{3}{4}+o(1)}\right)$.
Now, we can apply Hal\'asz theorem to the main term on the right-hand side, and we arrive at the pointwise bound 
$$ \abs{\sum_{X\leq n\leq 3X} f(n) n^{-1-it}} \ll \frac{(M(f;X)+1) \exp(-M(f;X))}{1+|t-t_1|} + O\left(\frac{1}{(\log X)^{\frac{3}{4}-o(1)}}\right),$$
and we can bound the integral over $\mathcal{C}$ 
$$\int_{\mathcal{C}}\abs{\sum_{X\leq n\leq 3X} f(n) n^{-1-it}}^2 dt \ll M(f;X)^2 \exp(-2M(f;X)) + (\log X)^{-\frac{3}{2} + \frac{1}{16}+o(1)}.$$

All together, we conclude that 
$$\int_{0}^T \abs{F(1+it)}^2 dt \ll \exp(-(2-o(1))M(f;X)) + \frac{(\log \log h)^2}{(\log h)^2}$$
in Case II, where $f$ does not satisfy any strong non-pretentiousness hypothesis. A similar calculation recovers this bound for the second integral in \eqref{eq:MRT_parseval}, and hence \eqref{eq:last_error_bound} holds in the case where $M(f;X) \leq \frac{1}{32}\log\log X$ as well.
 
\cref{thm:smooth_f_thm} follows immediately.
\end{proof}
\begin{remark}
Observe that because we only dealt with the sieve error on the frequency side rather than the physical side, we did not need the steps in the argument that deduced \cref{thm:liouville_bound} from \cref{prop:explicit_1_bound}.
\end{remark}

\subsection{Arbitrary multiplicative functions}{\label{sec:4.2}}

While it seems substantially harder to get a bound of the form $$\exp(-(2-o(1))M(f;X)) + \frac{(\log\log h)^2}{(\log h)^2} + \frac{1}{(\log X)^{2-o(1)}},$$
for an arbitrary multiplicative function $f$, one can prove a weaker bound where the error term is $\frac{1}{(\log X)^{1-o(1)}}$ via a suitable variation of the methods presented in \cref{sec:4.1}.
The loss comes from having to choose the final prime range so that the complementary polynomial remains long enough for Hal\'asz' theorem to give useful pointwise savings.

To ensure that $R_{v}(s)$ is not too short, we will need a weaker form of \cref{lem:last_range_sieve_error}.
We let our choice of parameters be exactly the same, except rather than choose $Q$ as in \eqref{eq:PQ_choices}, we take 
$$Q = \frac{X}{\exp\left((\log X)^{\beta}\right)},$$
where $\beta \in (0,1)$ is a parameter to be chosen later.

Recall that when we apply the Ramar\'e identity, the prime-supported $Q_{v}$ can have length at most $Q$. So, $R_{v}$ has length at least $\frac{X}{Q} = \exp\left((\log X)^{\beta}\right)$. To get a bound in terms of $X$ rather than $R_{v}$, we need to compute the maximum difference in the pretentious distance when measured at these two scales. 

\begin{lemma}{\label{lem:multiscale_halasz}}
    Let $0 < \beta  < 1$. Let $X \geq 10$, and for $2\leq Y\leq X$, let $$M(f;Y) = \min_{|t|\leq X}\sum_{p\leq Y}\frac{1-\Re(f(p)p^{-it})}{p} = \min_{|t|\leq X} \mathbb{D}(f, n\mapsto n^{it};Y)^2.$$
    Then we have 
    $$M(f;X) \leq  M(f;Y) + (\log\log X - \log\log Y) + O(1).$$
    In particular if $Y \geq \exp\left((\log X)^{\beta}\right)$, then
    $$M(f;X) - M(f;Y) \leq (1-\beta + o(1))\log\log X.$$
\end{lemma}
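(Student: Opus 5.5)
The plan is to start from a minimizer at scale $Y$, perturb its frequency by an amount of order $1/\log Y$, and average over the perturbation. This kills the correlation on the primes in $(Y,X]$ while changing the contribution of the primes $p\le Y$ by only $O(1)$.

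\textbf{Step 1: the trivial split.} Let $t_0$ with $|t_0|\le X$ attain $M(f;Y)$. For any $t$,
$$\mathbb{D}(f,n\mapsto n^{it};X)^2=\mathbb{D}(f,n\mapsto n^{it};Y)^2+\sum_{Y<p\le X}\frac1p-\Re\sum_{Y<p\le X}\frac{f(p)p^{-it}}{p}.$$
By Mertens' theorem the middle sum is $\log\log X-\log\log Y+O(1)$. Taking $t=t_0$ only bounds the last term by $\sum_{Y<p\le X}1/p$, which gives the useless factor $2$. We therefore choose $t=t_0+u$ for a suitable small $u$, found by averaging.

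\textbf{Step 2: averaging over a short window.} Set $U=1/\log Y$. Let $I=[0,U]$ if $t_0\le 0$ and $I=[-U,0]$ otherwise; then $|t_0+u|\le X$ for all $u\in I$, since $U\le 1\le X$. We average the displayed identity over $u\in I$ and treat the two ranges of primes separately.

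For $p\le Y$ we use $|p^{-iu}-1|\le |u|\log p\le U\log p$. This gives
$$\bigl|\mathbb{D}(f,n^{i(t_0+u)};Y)^2-\mathbb{D}(f,n^{it_0};Y)^2\bigr|\le U\sum_{p\le Y}\frac{\log p}{p}=O(1)$$
uniformly in $u\in I$, by Mertens.

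For $Y<p\le X$ we compute
$$\frac1U\int_I p^{-iu}\,du=O\!\left(\frac{1}{U\log p}\right),$$
since this is $(1-e^{\mp iU\log p})/(\pm iU\log p)$. Hence the averaged tail correlation satisfies
$$\Bigl|\frac1U\int_I\sum_{Y<p\le X}\frac{f(p)p^{-i(t_0+u)}}{p}\,du\Bigr|\ll \frac{1}{U}\sum_{p>Y}\frac{1}{p\log p}\ll \frac{\log Y}{\log Y}=O(1),$$
using $\sum_{p>Y}1/(p\log p)\ll 1/\log Y$.

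\textbf{Step 3: conclusion.} Combining Steps 1 and 2, the average over $u\in I$ of $\mathbb{D}(f,n\mapsto n^{i(t_0+u)};X)^2$ is at most $M(f;Y)+(\log\log X-\log\log Y)+O(1)$. So some $u\in I$ achieves at most the average, and since $|t_0+u|\le X$ we get
$$M(f;X)\le \mathbb{D}(f,n\mapsto n^{i(t_0+u)};X)^2\le M(f;Y)+(\log\log X-\log\log Y)+O(1).$$

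For the second claim, suppose $Y\ge\exp((\log X)^\beta)$. Then $\log\log Y\ge\beta\log\log X$, so $\log\log X-\log\log Y\le(1-\beta)\log\log X$, and the $O(1)$ term is absorbed into $o(1)\log\log X$.

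The only delicate point is the choice $U\asymp 1/\log Y$. It must be small enough that the primes up to $Y$ barely notice the shift, and large enough that the primes beyond $Y$ see their phases averaged out. Both requirements cost exactly $O(1)$, by the two Mertens-type estimates in Step 2.
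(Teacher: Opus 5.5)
Your proposal is correct and follows the paper's proof essentially step for step: the paper also takes a minimizer $t_Y$ at scale $Y$ and averages over a one-sided window of length $U=1/\log Y$. It then bounds the change on the primes $p\le Y$ by $O(U\log Y)$ and the averaged tail correlation by $\frac{1}{U}\sum_{p>Y}\frac{1}{p\log p}=O(1)$, exactly as you do. (Choosing the window's side by the sign of $t_0$ is a slightly cleaner way to keep $|t_0+u|\le X$; the claim $U\le 1$ fails for $Y<e$, but $U\le 1/\log 2<10\le X$ is all you need.)
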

\begin{proof}
    Define the truncated pretentious distance 
    $$\mathbb{D}(f, n\mapsto n^{it} ;Y, X)^2 := \sum_{Y \leq p \leq X} \frac{1-\Re(f(p)p^{-it})}{p}.$$
    For all $t$, we can write 
    $$\mathbb{D}(f, n\mapsto n^{it} ; X)^2 = \mathbb{D}(f, n\mapsto n^{it};Y)^2 + \mathbb{D}(f, n\mapsto n^{it} ;Y, X)^2.$$
    Let $t_Y$ be a value of $t$ that attains the minimum in $M(f ; Y)$, and let $U = \frac{1}{(\log Y)}$.
    Look at the interval 
    $$I := [t_Y, t_Y+U]$$
    (for $X$ sufficiently large), and without loss of generality, $I\subseteq [-X, X]$. 
    If not, one would take $[t_Y - U, t_Y]$ and the argument is the same.
    We will prove that there is some $t_I$ inside $I$ for which
    $$\mathbb{D}(f, n\mapsto n^{it_I};X)^2 \leq M(f ; Y) + \sum_{Y\leq p\leq X}\frac{1}{p} + O(1),$$
    and hence certainly $M(f; X)$ is upper bounded by the same.
    We do this by averaging over $I$:
    \begin{align*}
        M(f; X) &\leq \frac{1}{U} \int_{I} \mathbb{D}(f, n\mapsto n^{it} ;X)^2 dt\\
        &= \frac{1}{U} \int_I \mathbb{D}(f, n\mapsto n^{it};Y)^2 dt + \frac{1}{U}\int_I \mathbb{D}(f, n\mapsto n^{it} ; Y, X)^2 dt.
    \end{align*}
    For the first term we compare to the minimizer $t_Y$.
    Let $t = t_Y + u$ for $0\leq u\leq U$.
    Then we have
    \begin{align*}
        \abs{\mathbb{D}(f, n\mapsto n^{it};Y)^2 - \mathbb{D}(f, n\mapsto n^{it_Y};Y)^2} &\leq \sum_{p\leq Y} \frac{|p^{iu}-1|}{p}\\
        &\leq |u|\sum_{p\leq Y}\frac{\log p}{p}\\
        &\ll |u| \log Y, 
    \end{align*}
    where in the second line we used $\abs{\sin x} \leq |x|$ and in the third we used Mertens' theorem.
    Note that we can afford to be loose with constants, as
    $$U \log Y = O(1),$$
    and in the case of interest, when $Y\geq \exp\left((\log X)^{\beta}\right)$, this is much smaller than $\sum_{Y \leq p\leq X}\frac{1}{p}$.
    
    Averaging over $0\leq u\leq U$ gives 
    $$\frac{1}{U} \int_I \mathbb{D}(f, n\mapsto n^{it};Y)^2 \leq M(f;Y) + O\left(U \log Y\right) = M(f;Y) + O(1).$$
    Now for the truncated distance, again writing $t = t_Y+u$, we observe 
    $$\frac{1}{U}\int_I \mathbb{D}(f, n\mapsto n^{it}; Y, X)^2 dt = \sum_{Y < p\leq X}\frac{1}{p} - \Re \left(\sum_{Y < p\leq X} \frac{f(p)p^{-it_Y}}{p} \left(\frac{1}{U}\int_0^U p^{-iu} du \right)\right),$$
    and we can explicitly compute that the integral is $\ll \frac{1}{U\log p}$.
    We use the triangle inequality to bound the numerator by 1, and observe that the entire second term is at most
    $$\frac{1}{U} \sum_{Y\leq p\leq X}\frac{1}{p\log p} \ll \frac{1}{U\log Y} = O(1).$$
    Hence all together, 
    $$M(f;X) \leq M(f;Y) + \sum_{Y < p \leq X} \frac{1}{p} + O(1).$$
\end{proof}

\begin{proof}[Proof (\cref{thm:general_f}).]
Firstly, we may assume that $M(f; X) > \frac{1}{50}\log\log X$, as otherwise the result is a consequence of Theorem A.1 of \cite{average_chowla}.

Again take $\varepsilon = \frac{\kappa}{2}$ and $\eta = \frac{\varepsilon}{4}$. Let $P_1 = (\log Q_1)^{\frac{100}{\eta}}$ and let $Q_1 = \min\left(h, \exp\left(\frac{\log X_0}{\log\log X_0}\right)\right)$. Let $P_j, Q_j$ for $j=2, \dots, J$ be as given in \cref{sec:2.2}, and let $P$ be as in \cref{eq:PQ_choices}. 

We may apply the same Parseval bound as given in \cref{eq:MRT_parseval}, apply \cref{lem:add_back_sieve_error} to account for integers with atypical factorizations, and then bound the integrals over $\mathcal{T}_1, \dots, \mathcal{T}_J$ using the bounds given in \eqref{eq:integral_T_1} and \cref{eq:integral_T_j} (again noting \cref{eq:same_integrals}). As stated earlier, we take $Q = X / \exp\left((\log X)^{\beta}\right)$. 
When we split the Dirichlet polynomial over $\mathcal{U}$ depending on whether or not each $n$ has a prime factor in $[P,Q]$, we now need to apply \cref{lem:abse_2} rather than just the $P$-smooth integer bound given by \cref{lem:last_range_sieve_error}. This gives us a sieve error of
\begin{equation}{\label{eq:modified_sieve_error}}
    \ll (\log X)^{-2+2\beta}.
\end{equation}

For the integral over $\mathcal{U}+u$, the argument from \cref{sec:4.1},
combined with \cref{lem:modified_R_halasz} and \cref{lem:prime_hm}, gives
$$ \int_{\mathcal{U}} |F(1+it)|^2 dt \ll \frac{(\log\log X)^{O(1)}}{\exp(2M(f;R_{v}))}
+ \frac{(\log\log X)^{O(1)}}{(\log R_{v})^2},$$
where $R_{v}\geq \frac{X}{Q}=\exp((\log X)^\beta)$. Additionally, under the assumption that $M(f;X) \geq \frac{1}{50}\log\log X$, we need not consider the pretentious case in the proof of \cref{thm:smooth_f_thm}.

The only difference is that when we apply Hal\'asz' theorem to $R_{v}(s)$ and \cref{lem:prime_hm} to $Q_{v}(s)$, the integral over $\mathcal{U}+u$ will be at most 
$$\ll \frac{(\log\log X)^{O(1)}}{\exp(2M(f; R_{v}))} + \frac{(\log\log X)^{O(1)}}{(\log R_{v})^2}.$$
The estimate is derived exactly as in \eqref{eq:4.1_U_integral}, although Hal\'asz' theorem gives savings at scale $R_{v}$ rather than at scale $X^{\delta}$. By \cref{lem:multiscale_halasz}, we have $$\exp(-2M(f;R_{v})) \ll (\log X)^{2-2\beta+o(1)} \exp(-2M(f;X)).$$

Define the quantity
$$\sigma(f;X) := \min\left( \frac{M(f;X)}{\log\log X}, 1\right),$$
and observe that by hypothesis $\sigma(f;X) \geq \frac{1}{50}$. 
Our savings from Hal\'asz in terms of $\sigma(f;X)$ is at most 
\begin{equation}{\label{eq:halasz_errors}}
    \ll (\log X)^{2-2\beta - 2\sigma(f;X)+o(1)} + (\log X)^{-2\beta + o(1)} + (\log X)^{-1+o(1)},
\end{equation}
where the last error is to account for the case where $\sigma(f;X)=1$. 
Now, ignoring the $o(1)$ factors, we balance the errors arising from \eqref{eq:modified_sieve_error} and \eqref{eq:halasz_errors} by taking $\beta = 1-\frac{\sigma(f;X)}{2}$. 
Then the first Hal\'asz error and the squared sieve error are both
$$\ll \exp(-(1-o(1))M(f;X)),$$
while the second Hal\'asz error is $\ll(\log X)^{-1+o(1)}$.

Combining this with the estimates over
$\mathcal{T}_1,\dots,\mathcal{T}_J$ and restoring the integers outside
$\mathcal{S}$ gives
$$\frac{1}{X}\int_X^{2X} \abs{\frac{1}{h}\sum_{x\leq n\leq x+h} f(n)}^2 dx \ll \exp(-(1-o(1))M(f;X)) + \frac{(\log\log h)^2}{(\log h)^2} +
\frac{1}{(\log X)^{1-o(1)}}.$$
This proves \cref{thm:general_f}.

\end{proof}

\begin{remark}
To see why one should expect a bound of this shape in this case, observe that when we proved \cref{thm:smooth_f_thm}, we chose the parameter $Q$ such that in the sieve error, we essentially had $\beta = 0$, and by restricting to $f$ supported on smooth numbers, we essentially were simultaneously taking $\beta = 1$ in the Hal\'asz error \eqref{eq:halasz_errors}.
Now that we actually have to interpolate between these errors, the best possible case would have $\beta = \frac{1}{2}$, which translates to taking the square root of the sieve upper bound from $[P,Q]$ and our savings from Hal\'asz theorem.
\end{remark}

\section{Applications to the averaged Chowla conjecture}{\label{sec:5}}

One important application of the bounds for multiplicative functions in short intervals was to prove an averaged version of Chowla's conjecture.
The proof proceeds in the following way: one proves an exponential sum estimate for $\lambda(n)$ against the phase $e(\alpha n)$, and then uses a Fourier identity, Cauchy--Schwarz, and van der Corput arguments to deduce the averaged Chowla conjecture.
\cref{thm:liouville_bound} will enable us to prove bounds for both of these theorems, at the limit of what is possible using the Matom\"aki--Radziwi\l{}\l{} method.

One should think of the interval length (in the case of \cref{thm:improved_exp_sum}) or the range of admissible shifts (in the case of \cref{thm:improved_avg_chowla}) as $h \geq \exp\left( (\log X)^{\frac{1}{700}}\right)$, as for smaller values the theorems in \cite{average_chowla} are optimal\footnote{Hence, in this regime, the $h^{\frac{1}{2C_{\kappa}}}$ term in \cref{prop:key_sum} is much larger than $(\log X)^{20}$. In fact, for the theorem to have content, we must have $h\to \infty$ as $X \to\infty$, and when we eventually choose $W = (\log h)^{C}$ for $C$ some fixed constant, $W \ll H^{\frac{1}{2C_{\kappa}}}$ is guaranteed.}.

In particular, we prove the following result (this is the ``key exponential sum estimate'' in \cite{average_chowla}).
For convenience, we redefine the set $\mathcal{S}$ to count $n\leq X$ rather than $n\leq 3X$. 
Furthermore here we will specify that $X_0 = X^{\frac{1}{2}}$ in the definition of $\mathcal{S}$. In terms of quantitative bounds these changes are purely cosmetic. 

Let $C_{\kappa}$ be a constant satisfying
$$C_{\kappa}\frac{\kappa}{24} - \frac{1}{15} > \frac{5}{2}.$$ For concreteness, one can take $C_{\kappa} = 10^{10}$.

\begin{proposition}[Key exponential sum estimate]{\label{prop:key_sum}}
    Let $X \geq h \geq 10$ and let $W \geq 10$ be such that 
    $$(\log h)^5 \leq W \leq \min\{ h^{\frac{1}{2C_{\kappa}}}, (\log X)^{20}\}.$$ 
    Let
    $$h_0 = \exp\left(\frac{\log X_0}{\log\log X_0}\right),$$
    and let $\widetilde{h} = \min(h, h_0)$.
    In the definition of $\mathcal{S}$, take $P_1 = W^{C_{\kappa}}$ and $Q_1 = \widetilde{h}/W^3$. Then uniformly for $\alpha \in \mathbb{T}$
    $$\frac{1}{X}\int_{\R} \abs{\frac{1}{h}\sum_{\substack{x\leq n \leq x+h \\ n\in \mathcal{S}}} \lambda(n) e(\alpha n)} dx \ll \frac{(\log h)^{\frac{1}{4}} \log\log h}{W^{\frac{1}{4}}}.$$
\end{proposition}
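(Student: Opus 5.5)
Following Matom\"aki--Radziwi\l{}\l{}--Tao's key exponential sum estimate, we split $\alpha$ into major and minor arcs via Dirichlet's approximation theorem. We write $\alpha = a/q + \theta$ with $q \le Q_0$ and $|\theta| \le 1/(qQ_0)$, choosing $Q_0 = h/W$. We call $\alpha$ a \emph{major arc} point if $q \le W$ and $|\theta| \le W/h$; otherwise $\alpha$ is on the minor arcs.

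\textbf{Minor arcs.} Here we follow the argument of \cite{average_chowla} verbatim. Writing $n\in\mathcal{S}$ as $n = pm$ with $p\in[P_1,Q_1]$ (Ramar\'e's identity with the weight $1/(\omega_{[P_1,Q_1]}(m)+1)$), we reduce to bilinear sums $\sum_{p}\sum_m \lambda(m)\lambda(p)e(\alpha pm)$ over short intervals. Cauchy--Schwarz in $m$ then produces correlations $\sum_m e(\alpha m(p-p'))$. These are controlled by the standard large sieve/Vinogradov-type bound, which saves roughly $q^{-1}+ (h/W)^{-1}+\dots$ over pairs of primes; the number of prime pairs in $[P_1,Q_1]$ with $\log$-weights gives the factor $(\log h)$. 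The resulting bound is $\ll (\log h)^{1/4}W^{-1/4}$ after the two applications of Cauchy--Schwarz (first for $L^1\to L^2$ in $x$, then in $m$), with a $\log\log h$ from dyadic decomposition of the prime range. I expect this step to go through unchanged, since it never used the size of the final prime range. The choices $P_1 = W^{C_\kappa}$ and $Q_1 = \widetilde h/W^3$ leave room for the $W^3$ loss in the interval length.

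\textbf{Major arcs.} For $\alpha = a/q+\theta$ with $q\le W\le(\log X)^{20}$ and $|\theta|\le W/h$, we expand $e(an/q)$ into Dirichlet characters modulo divisors of $q$ (Gauss sums), losing a factor $\ll q^{1/2}$ or at most $q$. We then remove $e(\theta n)$ by summation by parts over the interval of length $h$, which costs $1+|\theta| h\le 2W$; alternatively, we split into subintervals of length $h/W$ on which the phase is nearly constant. This reduces the problem to
$$\frac1X\int\Big|\frac{1}{h'}\sum_{\substack{x\le n\le x+h'\\ n\in\mathcal{S}}}\chi(n)\lambda(n)\Big|\,dx$$
with $h' \ge h/W$ and $\chi$ of modulus $\le(\log X)^{20}$. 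Cauchy--Schwarz and \cref{cor:short_interval_major_arc} (with $B=20$ and $A$ large) bound this by $\big((\log Q_1)^{1/3}P_1^{-(\varepsilon/3-\eta)}+(\log X)^{-A}\big)^{1/2}$. Since $P_1=W^{C_\kappa}$ and $\varepsilon/3-\eta = \kappa/6-\kappa/8=\kappa/24$, this is $\ll (\log h)^{1/6} W^{-C_\kappa\kappa/48}$ up to the $W^{O(1)}$ losses. These losses total at most $W^{1+1/2}$ times the mild factor from $h'$. The hypothesis $C_\kappa\kappa/24 - 1/15 > 5/2$ is exactly what ensures the net exponent of $W$ is below $-1/4$. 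Finally, $(\log X)^{-A/2}W^{O(1)}$ is negligible because $W\le(\log X)^{20}$.

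\textbf{Main obstacle.} The delicate point is the major arc bookkeeping. We must check that the $\mathcal{S}$ appearing in \cref{cor:short_interval_major_arc} matches the present one ($X_0=X^{1/2}$, $Q_1=\widetilde h/W^3$). We must also check that the length $h' = h/W$ still satisfies $h'\ge Q_1$ as required by the $\mathcal{T}_1$ analysis, which is why $W^3$ is subtracted in $Q_1$. Finally, the Gauss-sum loss $q^{1/2}$, the $W$ from $\theta$, and the square-root from Cauchy--Schwarz must all be absorbed by the power $W^{-C_\kappa\kappa/48}$. I would first verify the exponent arithmetic against the stated constraint on $C_\kappa$, adjusting the sub-interval splitting if needed.
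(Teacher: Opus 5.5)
Your architecture matches the paper's. You use Dirichlet approximation with $|\alpha-a/q|\le W/(qh)$ and $q\le h/W$, and quote the minor arcs $q>W$ verbatim from Matom\"aki--Radziwi\l{}\l{}--Tao. On the major arcs you do partial summation in $\theta$, reduce by $d=\gcd(n,q)\le W<P_1$, expand in characters modulo $q/d$, and apply \cref{cor:short_interval_major_arc} with $A$ large, which is where $W\le(\log X)^{20}$ enters. Expanding $e(an/q)$ with Gauss sums (loss $\sqrt q$) instead of splitting into residue classes $b \pmod q$ (loss $q$, as the paper does) is a harmless variation.

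\textbf{The gap: the exponent bookkeeping does not close.} The hypothesis $C_\kappa\kappa/24-1/15>5/2$, together with $(\log Q_1)^{1/3}\le(\log h)^{1/3}\le W^{1/15}$, gives a mean-square saving of $W^{-5/2}$. After Cauchy--Schwarz that is only $W^{-5/4}$.
\begin{itemize}
\item You bound the partial-summation cost by $1+|\theta|h\le 2W$ and additionally pay $q^{1/2}\le W^{1/2}$ for the Gauss sums. The net is then $W^{3/2}\cdot W^{-5/4}=W^{1/4}$, which is worse than trivial.
\item Under your accounting you would need $C_\kappa\kappa/24-1/15>7/2$. So the claim that the stated hypothesis is ``exactly what ensures'' an exponent below $-1/4$ is false. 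The concrete choice $C_\kappa=10^{10}$ would hide this, but the proposition is asserted for any $C_\kappa$ satisfying the constraint.
\end{itemize}
The missing point is to keep the factor $1/q$ from Dirichlet's theorem, $|\theta|\le W/(qh)$, which you already wrote down.
\begin{itemize}
\item The partial-summation factor becomes $1+W/q$.
\item It combines with the arithmetic loss to $\sqrt q\,(1+W/q)\le 2W$ in your version, or $q(1+W/q)\le 2W$ in the paper's.
\item This gives exactly $W\cdot W^{-5/4}=W^{-1/4}$, which is how the paper finishes: $\frac{W}{hq}\int_{qQ_1}^{h}\frac{qh'X}{W^{5/4}}\,dh'\ll \frac{hX}{W^{1/4}}$.
\end{itemize}

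\textbf{Smaller points.}
\begin{itemize}
\item The partial-summation integral runs over all $h'\in[0,h]$, so short lengths must be discarded trivially. The paper cuts at $h'\le qQ_1$. With your cutoff $h'\le h/W$ this costs $\frac{W}{qh}\int_0^{h/W}h'X\,dh'\ll hX/W$. Only after this is $h'/d\ge h/W^2\ge Q_1$ guaranteed, which the $\mathcal{T}_1$ bound behind the corollary needs.
\item Your alternative of splitting into subintervals of length $h/W$ does not make the phase nearly constant, since $|\theta|h/W$ can be as large as $1$ when $q=1$. Length $\asymp h/W^2$ works, and after dividing by $d\le W$ it is still compatible with $Q_1=\widetilde h/W^3$.
\item \cref{cor:short_interval_major_arc} is stated for $x\in[X,2X]$. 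After the change of variables $y=x/d$ you must discard $y\le X/W^{10}$ trivially. The rest is split into dyadic ranges $[X',2X']$ with $\log X'\asymp\log X$, as the paper does.
\end{itemize}
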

In fact, we may assume $h$ sufficiently large so that $(\log h)^5 \leq h^{\frac{1}{2C_{\kappa}}}$ and the $W$ range is nonempty (else \cref{thm:improved_exp_sum} and \cref{thm:improved_avg_chowla} are trivial as stated). 

The proof applies the circle method on $\alpha$, involving casework on whether $\alpha$ is close to a rational $\frac{a}{q}$, where $q \leq W$ places $\alpha$ in a major arc and otherwise $\alpha$ is in a minor arc.
The quantitative bound here is the same as the one stated by Matom\"aki--Radziwi\l{}\l{}--Tao, but the admissible range of $W$ (the major arc threshold) is increased from $(\log X)^{\frac{1}{125}}$ to $(\log X)^{20}$.
The upshot is that we no longer have to do any casework on $h$ in the deduction of \cref{thm:improved_exp_sum}.
Any $h\leq X$ satisfies $(\log h)^5 \leq (\log X)^{20}$ and therefore when we apply this, we may always take $W = (\log h)^5$. 

Here, $\mathcal{S}$ is essentially the same, although we take $X_0 = X^{\frac{1}{2}}$, which changes nothing in the quantitative aspects.
When we ultimately use this to prove \cref{thm:improved_exp_sum} and \cref{thm:improved_avg_chowla}, we will take $W = (\log h)^5$ and $W = (\log H)^{20}$ respectively.
Hence, the structure of $P_1$ and $Q_1$ remains (basically) the same; we will still have logarithmic separation between $P_1$ and $Q_1$, and up to constants $\log P_1$ and $\log Q_1$ are basically the same as in the previous sections. 

\begin{proof}
We know via Dirichlet's theorem that there are coprime $a,q$ with $1\leq q\leq \frac{h}{W}$, such that
$$\abs{\alpha-\frac{a}{q}} \leq \frac{W}{hq}.$$

We will define major arcs and minor arcs depending on whether or not $q \leq W$. 
The structure of the proof will be \emph{exactly} the same as previous work, except at a crucial point in the major arc bounds, at which we gain extra flexibility with the choice of $W$ due to our improved short interval bounds.
In fact, we can leave the minor arc bounds completely unchanged, and the exact same argument as in Section 3 of \cite{average_chowla} works.

If $q > W$ then 
$$\frac{1}{X}\int_{\R} \theta(x) \abs{\frac{1}{h}\sum_{\substack{x\leq n\leq x+h\\ n\in \mathcal{S}}} \lambda(n) e(\alpha n)} dx \ll \frac{(\log h)^{\frac{1}{4}} \log\log h}{W^{\frac{1}{4}}},$$
where $\theta:\R\to \C$ is measurable, 1-bounded, and supported on $[0,X]$. This is precisely equation (3.1) in \cite{average_chowla}.

Hence we focus on bounding the exponential sum in \cref{prop:key_sum} in the case where $\alpha$ is on the major arcs.
The shape of the bound on the right-hand side of \cref{prop:key_sum} actually comes from the minor arcs.
In the major arc case, we will prove that
\begin{equation}{\label{eq:major_arc_exp}}
    \int_{\R} \abs{\sum_{\substack{x\leq n\leq x+h \\ n\in \mathcal{S}}} \lambda(n)e(\alpha n)} dx \ll \frac{hX}{W^{\frac{1}{4}}}.
\end{equation}
By hypothesis, $\alpha = \frac{a}{q} + \theta$ with $\theta = O\left(\frac{W}{hq}\right)$.
By partial summation, we have
\begin{align}{\label{eq:ps}}
\abs{ \sum_{\substack{x \leq n \leq x+h \\ n\in\mathcal{S}}} \lambda(n) e(\alpha n)} & \ll \abs{\sum_{\substack{x \leq n \leq x+h\\ n\in\mathcal{S}}} \lambda(n) e(an / q)} + \frac{W}{h q} \int_{0}^{h} \abs{ \sum_{\substack{x \leq n \leq x + h'\\n\in \mathcal{S}}}
\lambda(n) e(an/q)} dh'.
\end{align}

We firstly handle the cases where $h'$ is too small to apply \cref{cor:short_interval_major_arc}.
Suppose $0\leq h'\leq qQ_1$.
We may apply the trivial bound
\begin{align*}
    \int_{\R} \abs{\sum_{\substack{x\leq n\leq x+h'\\n\in \mathcal{S}}} \lambda(n) e(an/q)}dx &\ll h' X.
\end{align*}
Hence, this range of $h'$ contributes
$$\frac{W}{hq} \int_0^{qQ_1} h' X dh' \ll \frac{Wq Q_1^2}{h^2} hX \ll \frac{1}{W^4} hX,$$
where we have used $q\leq W$ and $Q_1 \leq \frac{h}{W^3}$. This bound is more than enough, and furthermore we have that for all remaining $h'$, we have $h' > qQ_1$, in which case, for any $d\leq q$ we have $\frac{h'}{d} \geq \frac{h'}{q} > Q_1$ and thus we can apply \cref{cor:short_interval_major_arc} with interval length $\frac{h'}{d}$.

Now, exactly as in \cite{average_chowla}, it will be enough to bound 
$$\int_{\R} \abs{ \sum_{\substack{x \leq n \leq x + h'\\n\in \mathcal{S}}} \lambda(n) e(a n / q)} dx,$$
again with $h' > qQ_1$. 
To do this, we split into residue classes, and for each residue class $b$ modulo $q$, we reduce modulo $\gcd(b,q)$. 
This yields
$$\int_{\R} \abs{ \sum_{\substack{x \leq n \leq x + h'\\n\in \mathcal{S}}} \lambda(n) e(an/q)} dx \leq \sum_{b \pmod{q}} \int_{\R} \abs{ \sum_{\substack{x\leq n\leq x+h' \\ n\in\mathcal{S} \\ n\equiv b \pmod{q}}} \lambda(n)} dx,$$
and let $d = \gcd(b,q)$ with $b = d b_0$, $q = d q_0$, and $n = d m$. 
This way, $n\equiv b \pmod{q}$ simplifies to $m\equiv b_0 \pmod{q_0}$, with $\gcd(b_0, q_0)=1$.
Note that $d \leq q\leq W\leq P_1$, so we can factor $d$ out of $n$ without causing problems for the typical factorization set.
The only difference will be that in what remains, the set $\mathcal{S}$ has scale $\frac{X}{d}$ rather than just $X$ (this is why we needed the additional flexibility with $X_0$).
We have 
$$\1_{\mathcal{S}}(n)\lambda(n) = \lambda(d) \1_{\mathcal{S}_{P_1, Q_1, X_0, X/d}}(m) \lambda(m),$$
as in \cite{average_chowla}.
Denote this new set of integers with typical factorization by $\widetilde{\mathcal{S}}$.
As stated earlier though, since $d$ is very small, we have not changed anything quantitative about the bounds coming from this new set $\widetilde{\mathcal{S}}$.
We now rewrite the congruence condition modulo $q_0$ in terms of Dirichlet characters, so after applying that identity and using the triangle inequality we have
\begin{align*}
    \sum_{b \pmod{q}} \int_{\R} \abs{ \sum_{\substack{x\leq n\leq x+h' \\ n\in\mathcal{S} \\ n\equiv b \pmod{q}}} \lambda(n)} dx &\leq \sum_{b \pmod{q}}  \frac{1}{\phi(q_0)} \sum_{\chi \pmod{q_0}} \int_{\R} \abs{\sum_{\substack{x/d \leq m \leq (x+h')/d \\ m\in \widetilde{\mathcal{S}}}} \overline{\chi(m)}\lambda(m)} dx \\
    &\leq \sum_{b \pmod{q}}  \frac{d}{\phi(q_0)} \sum_{\chi \pmod{q_0}} \int_{\R} \abs{\sum_{\substack{y \leq m \leq y+h'/d \\ m\in \widetilde{\mathcal{S}}}} \overline{\chi(m)}\lambda(m)} dy,
\end{align*}
with the change of variables $y = x/d$.
For the part of the integral where $y\leq \frac{X}{W^{10}}$, bound everything trivially by
$$\ll q^2 \frac{X}{W^{10}} \frac{h'}{d} \ll \frac{hX}{W^6}$$
which is more than enough.
In the remaining region $\frac{X}{W^{10}} \leq y\leq \frac{2X}{d}$, we split this into dyadic blocks of length $X'$.
The previous expression involving the sum over residues $b$ is then at most 
$$\ll \sum_{\substack{\frac{X}{W^{10}} < X' < \frac{2X}{d}\\X' \textup{ dyadic}}} \sum_{b\pmod{q}}  \frac{d}{\phi(q_0)} \sum_{\chi \pmod{q_0}} \int_{X'}^{2X'} \abs{\sum_{\substack{y \leq m \leq y+h'/d \\ m\in \widetilde{\mathcal{S}}}} \overline{\chi(m)}\lambda(m)} dy + \frac{hX}{W^6}.$$
This integral (after squaring the integrand, which will require some Cauchy--Schwarz) is precisely what \cref{cor:short_interval_major_arc} is equipped to handle.
We know that 
$$\int_{X'}^{2X'} \abs{\sum_{\substack{y \leq m \leq y+h'/d \\ m\in \widetilde{\mathcal{S}}}} \overline{\chi(m)}\lambda(m)}^2 dy \ll \left(\frac{(\log Q_1)^{\frac{1}{3}}}{P_1^{\frac{\varepsilon}{3} - \eta}} + (\log X')^{-A}\right) \frac{(h')^2X'}{d^2},$$
and since $P_1 = W^{C_{\kappa}}$, with $\varepsilon = \frac{\kappa}{2}$ and $\eta = \frac{\kappa}{8}$, the denominator is $W^{C_{\kappa}\frac{\kappa}{24}}$.
Then since $\frac{h'}{d} \leq h$, and $W \geq (\log h)^5$, we know that the numerator is at most $W^{\frac{1}{15}}$. $C_{\kappa}\frac{\kappa}{24}-\frac{1}{15} > \frac{5}{2}$, by choice of $C_{\kappa}$, so the first error term is $\ll \frac{1}{W^{\frac{5}{2}}}$.

The crucial improvement here is that we may choose $A$ to be large here.
The previous limitation that 
$$\frac{1}{(\log X)^{\frac{1}{50}}} \ll \frac{1}{W^{\frac{5}{2}}}$$ caps the major arc threshold at $(\log X)^{\frac{1}{125}}$, which propagates to these errors of $(\log X)^c$ for $c = \frac{1}{700}$ and later $c = \frac{1}{3000}$.
Here, by taking $A$ to be, say, 100, we can take $W$ all the way up to $(\log X)^{20}$.
Having applied our improved bounds, the remainder of the proof concludes exactly as in \cite{average_chowla}.
We have the $W^{-\frac{5}{2}}$ bound, so by Cauchy--Schwarz, 
$$\int_{X'}^{2X'}\abs{\sum_{\substack{y \leq m \leq y+h'/d \\ m\in \widetilde{\mathcal{S}}}} \overline{\chi(m)}\lambda(m)} dy \ll W^{-\frac{5}{4}} \frac{h' X'}{d},$$
and so when we sum over $X', b, \chi$, we see that 
$$\int_{\R} \abs{ \sum_{\substack{x \leq n \leq x + h'\\n\in \mathcal{S}}} \lambda(n) e(a n / q)} dx \ll \frac{q h' X}{W^{\frac{5}{4}}}.$$
When $h'=h$ this gives the endpoint bound in \eqref{eq:ps}.
For the remaining integral over $h'$, we plug the above bound into the integrand in the second term of \eqref{eq:ps} and applying $q\leq W$, we have 
\begin{align*}
    \frac{W}{hq}\int_0^{qQ_1} h' X dh' + \frac{W}{hq}\int_{qQ_1}^h \frac{qh'X}{W^{\frac{5}{4}}}dh' &\ll \frac{hX}{W^4} + \frac{X}{h}\frac{1}{W^{\frac{1}{4}}} \left(h^2 - q^2Q_1^2\right)\\
    &\ll \frac{hX}{W^{1/4}} + \frac{hX}{W^4}.
\end{align*}

This gives the major arc bound \eqref{eq:major_arc_exp} and hence together with the minor arc bound from \cite{average_chowla} completes the proof.
\end{proof}

\begin{proof}[Proof (of \cref{thm:improved_exp_sum}).]
Take $W=(\log h)^5$ in \cref{prop:key_sum}. This gives
$$ \sup_{\alpha\in\mathbb T} \frac{1}{X}\int_{\mathbb R} \abs{\frac{1}{h} \sum_{\substack{x\leq n\leq x+h\\ n\in \mathcal{S}}} \lambda(n)e(\alpha n)}dx \ll \frac{\log\log h}{\log h}.
$$
Firstly suppose $h \leq h_0 = \exp\left(\frac{\log X_0}{\log\log X_0}\right)$. Then by \cref{prop:sieve_error} and the choices $P_1=W^{C_{\kappa}}$ and $Q_1=h/W^3$, we have
$\#\{n\leq 2X:n\notin \mathcal{S}\} \ll X\frac{\log P_1}{\log Q_1} \ll X\frac{\log\log h}{\log h}$.
Hence, by Fubini and the triangle inequality,
$$ \frac{1}{X}\int_{\mathbb R} \abs{\frac1{h}\sum_{\substack{x\leq n\leq x+h\\ n\notin \mathcal{S}}}
 \lambda(n)e(\alpha n)}dx
 \ll \frac{\log\log h}{\log h}.
$$
As in the case of \cref{thm:liouville_bound}, in the event that $h > \exp\left(\frac{\log X_0}{\log\log X_0}\right)$, we take $Q_1 = \frac{h_0}{W^3}$, and we obtain an upper bound of $\frac{(\log\log X)^2}{\log X}$.
Again, we take the sum to state a bound that is uniform in $h$. 

Adding the estimates for the integrals where $n\in\mathcal{S}$ and $n\notin \mathcal{S}$ gives \cref{thm:improved_exp_sum}. 
\end{proof}

Having proved \cref{prop:key_sum}, the proof of \cref{thm:improved_avg_chowla} is exactly the same as in the original Matom\"aki--Radziwi\l{}\l{}--Tao paper.
The only difference is that we need not split into cases involving $H < H_0$ and $H \geq H_0$ due to some limitation on $W$. The only casework comes from \cref{cor:short_interval_major_arc} itself, in which we may require casework on $Q_1$. 
We sketch the main ideas (and refer the reader to \cite{average_chowla} for a full proof), but the key improvement is that we can take $W = (\log H)^{20}$ because for $H\leq X$, $(\log H)^{20} \leq (\log X)^{20}$.

\begin{proof}[Proof sketch (\cref{thm:improved_avg_chowla}).]
The main tool that enables one to deduce \cref{thm:improved_avg_chowla} from a bound like \cref{prop:key_sum} is the following Fourier analytic identity (see Lemma 1.4 of \cite{average_chowla}):  for $f:\Z\to \C$ finitely supported and $H > 0$, we have
$$\int_{\mathbb{T}}\left(\int_{\R}\abs{\sum_{x\leq n\leq x+H}f(n) e(\alpha n)}^2 dx \right)^2 d\alpha = \sum_{|h|\leq H}(H-|h|)^2\abs{\sum_n f(n) \overline{f(n+h)}}^2.$$
We apply this with $2H$ instead of $H$, and $f(n) = \lambda(n) \1_{\mathcal{S}}(n) \1_{[1, X+2H]}(n)$.
\cref{prop:key_sum} gives
$$\sup_{\alpha\in\mathbb T}\int_{\mathbb R} \abs{\sum_{x\leq n\leq x+2H} f(n)e(\alpha n)}^2dx \ll \frac{(\log H)^{\frac{1}{4}}\log\log H}{W^{\frac{1}{4}}}H^2X. $$
Consequently, when we drop the $H < |h| \leq 2H$ terms,
$$\sum_{|h|\leq H} \abs{\sum_n \1_{\mathcal{S}}(n)\lambda(n)\1_{\mathcal{S}}(n+h)\lambda(n+h)}^2 \ll \frac{(\log H)^{\frac{1}{4}}\log\log H}{W^{\frac{1}{4}}}HX^2.
$$
This is the two-point correlation input used in the van der Corput
argument of Matom\"aki--Radziwi\l{}\l{}--Tao. Repeating that argument with the improved admissible range for $W$ gives the
truncated averaged Chowla estimate
$$\sum_{1\leq h_2,\ldots,h_k\leq H}\abs{\sum_{1\leq n\leq X} \1_{\mathcal{S}}(n)\lambda(n) \prod_{j=2}^k \1_{\mathcal{S}}(n+h_j)\lambda(n+h_j)} \ll k\frac{H^{k-1}X}{W^{\frac{1}{20}}}.$$
Lastly, the contribution of the terms in which at least one of
$n,n+h_2,\ldots,n+h_k$ is not in $\mathcal{S}$ is, again by \cref{prop:sieve_error}, casework on whether or not $H\leq \exp\left(\frac{\log X_0}{\log\log X_0}\right)$, and the
triangle inequality,
\begin{align*}
    &\ll kH^{k-1}X\frac{\log P_1}{\log Q_1}\\
    &\ll kH^{k-1}X \left(\frac{\log\log H}{\log H} + \frac{(\log\log X)^2}{\log X}\right)
\end{align*}
Choosing $W = (\log H)^{20}$, we conclude
$$\sum_{1\leq h_2,\ldots,h_k\leq H} \abs{\sum_{1\leq n\leq X}\lambda(n)\lambda(n+h_2)\cdots\lambda(n+h_k)} \ll k H^{k-1}X\left(\frac{\log\log H}{\log H} + \frac{(\log\log X)^2}{\log X}\right).
$$
Dividing by $H^{k-1}X$ gives \cref{thm:improved_avg_chowla}.
\end{proof}

\subsection*{Acknowledgements}

The author is very grateful to his advisor, Joni Ter\"av\"ainen, for suggesting the problem and for many fruitful discussions and helpful comments on earlier drafts of this paper.  The author was supported by funding from the European Union's Horizon Europe research and innovation programme under ERC grant agreement no. 101162746. 

\bibliographystyle{alpha}
\bibliography{references}  

\end{document}